%
%
%
%
\documentclass[12pt]{amsart}

\usepackage{amsmath,amssymb,txfonts}

\allowdisplaybreaks
\headsep 6mm
\footskip 12mm
\baselineskip 6mm
\usepackage{times}
\usepackage{bbm}
\usepackage[dvips]{graphicx}
\allowdisplaybreaks
\usepackage{amsxtra}
\usepackage[cp1252]{inputenc}
\usepackage{color}

\usepackage{mathrsfs}

\textwidth=17cm \textheight=22.9cm
\parindent=16pt
\oddsidemargin=-0.145cm \evensidemargin=-0.145cm \topmargin=-0.5cm
\numberwithin{equation}{section}

\newtheorem{Theorem}{Theorem}[section]
\newtheorem{thm}{Theorem}[section]
\newtheorem{theorem}{Theorem}[section]

\newtheorem{pro}[theorem]{Proposition}
\newtheorem{defi}{{Definition}}[section]

\newtheorem{lem}[theorem]{Lemma}

\newcommand{\ct}[1]{\langle {#1}\rangle \lower.3ex\mathrm{$_{t}$}}
\newcommand{\lt}[1]{[ {#1}] \lower.3ex\mathrm{$_{t}$}}

 \usepackage[colorlinks, citecolor=blue,pagebackref,hypertexnames=false]{hyperref}

\numberwithin{equation}{section}


\begin{document}

\title{Besov Spaces, Schatten Classes and Weighted Versions of the Quantised Derivative}

\author{Zhenbing Gong}
\address{Zhenbing Gong, Department of Applied Mathematics, School of Mathematics and Physics,
 University of Science and Technology Beijing,
Beijing 100083, China}
\email{gongzhenb@126.com}

\author{Ji Li}
\address{Ji Li, School of Mathematical and Physical Sciences, Macquarie University, NSW, 2109, Australia}
\email{ji.li@mq.edu.au}

\author{Brett D. Wick}
\address{Brett D. Wick, Department of Mathematics\\
         Washington University - St. Louis\\
         St. Louis, MO 63130-4899 USA
         }
\email{wick@math.wustl.edu}

  \date{\today}

 \subjclass[2010]{47B10, 42B20, 43A85}
\keywords{Schatten class, commutator, Riesz transform, Besov space}

\maketitle
\bigbreak

%
%
%
%
%

\bigskip
\begin{center}
Dedicated to Professor Oleg Besov on the occasion of his 90th birthday,
\end{center}
\bigskip

\begin{abstract}
		In this paper, we establish the Schatten class and endpoint weak Schatten class estimates for the commutator of Riesz transforms
on weighted $L^2$ spaces. It provides a weighted version for the estimate of the quantised derivative introduced by Alain Connes and studied recently by
Lord--McDonald--Sukochev--Zanin.
	\end{abstract}
	
	\maketitle

\section{Introduction}\label{Introduction}
The commutator $[b,T]$ of the singular integral operator $T$ with a symbol $b$, which is defined by
\begin{align*}
[b,T]f(x)=b(x)Tf(x)-T(bf)(x),
\end{align*}
has played a vital role in harmonic analysis, complex analysis and partial differential equations. We refer to the fundamental work by Nehari \cite{N},  Calder\'{o}n \cite{C1965} and Coifman--Rochberg--Weiss \cite{CRW1976}. It has been extensively studied by many authors in different aspects with various applications. See for example \cite{CDW2019, CLMS, Hy, P2003}.

Besides the boundedness and compactness, the Schatten class estimates of the commutator have been an important topic, as it connects to non-commutative analysis.
For example, the commutator of Riesz transforms $[b,R_j]$, $j=1,\ldots,n$, links to the quantised derivative
$$
\bar{d} b:=i\left[\operatorname{sgn}(\mathcal{D}), 1 \otimes M_b\right]
$$
of Alain Connes introduced in \cite[Chapter $\mathrm{IV}$]{C1994}, where $M_b$ is the multiplication operator defined as $M_bf(x) = b(x)f(x)$. Details of these notation will be stated in the last section. This has been intensively studied in \cite{FLMSZ, FSZ2022, GG2017, LMSZ2017, RS1989, JW1982}. We note that in \cite{LMSZ2017} they implemented a new approach to prove that for $b\in L^\infty(\mathbb R^n)$, $\bar{d} b$ is in the weak Schatten class if and only if $b$ is in the Sobolev space.

 In \cite{LLW2022}, the authors have considered the Schatten class estimate of the commutator of Hilbert transform in the two-weight setting (along the line of \cite{B} and \cite{LL2022}) and made a fundamental first step. 
\begin{flushleft}
	\textbf{Theorem A.}  Let  $H$ be the Hilbert transform on $\mathbb{R}$, $\mu, \lambda\in A_2$ and set $\nu=\mu^{\frac{1}{2}}\lambda^{-\frac{1}{2}}$. Suppose $b\in VMO$, then commutator $[b,H]$ belong to $S^2(L^2_{\lambda}(\mathbb{R}),L^{2}_{\mu}(\mathbb{R}))$ if and only if $b\in B_\nu^2(\mathbb{R})$.
\end{flushleft}
 
As commented in  \cite{LLW2022}, the full version of Schatten class estimate of $[b,H]$ is not known, nor the commutator of Riesz transforms. In fact, even the one weight setting has not been characterized before. Thus, a natural problem is to consider the charactherization of the Schatten class $S^{p}\ (0<p<\infty)$ of commutator of Riesz transforms in one-weight setting in  higher dimensional spaces. Therefore, in this paper we will consider the Schatten--Lorentz membership of the commutators acting on weighted spaces $L^2(w)$ for $w$ in the Muckenhoupt $A_2$ class, which provides a weighted version of \cite{LMSZ2017}. The main approach is via dyadic harmonic analysis, the decomposition of the cubes via the median of the VMO function, and via the nearly weakly orthonormal sequences as in \cite{RS1989}.

To state our result, we first recall  the Schatten classes.
Let $\mathcal{G}_1$ and $\mathcal{G}_2$ be separable complex Hilbert spaces.
Suppose $T$ is a compact operator from $\mathcal{G}_1$ to  $\mathcal{G}_2$, let $T^*$ be the adjoint operator, it is clear that $|T|=(T^*T)^{\frac{1}{2}}$ is a compact, self-adjoint and non-negative operator from $\mathcal{G}_1$ to  $\mathcal{G}_2$. Let $\left(\psi_k\right)_k$ be an orthonormal basis for $\mathcal{G}_1$ consisting of eigenvectors of $|T|$, and let $s_k(T)$ be the eigenvalue corresponding to the eigenvector $\psi_k, k\in \mathbb{Z}^+$.
The numbers $s_1(T) \geq s_2(T) \geq \cdots \geq s_n(T) \geq \cdots \geq 0$, are called the singular values of $T$.
If $0<p<\infty$, $0<q\leq \infty$ and the sequence of singular values is $\ell^{p,q}$-summable (with respect to a weight), then $T$ is said to belong to the Schatten--Lorentz class $S^{p,q}\left(\mathcal{G}_1, \mathcal{G}_2\right)$.

That is,
$$\|T\|_{S^{p,q}\left(\mathcal{G}_1, \mathcal{G}_2\right)}=\bigg(\sum_{k\in \mathbb{Z}^{+}}\left(s_{k}\left(T\right)\right)^{q}\left(1+k\right)^{\frac{q}{p}-1}\bigg)^{1\over q},\quad q<\infty, $$and
$$\|T\|_{S^{p,\infty}\left(\mathcal{G}_1, \mathcal{G}_2\right)}=\sup_{k\in \mathbb{Z}^{+}}s_{k}\left(T\right)\left(1+k\right)^{\frac{1}{p}},\quad q=\infty.  $$
Clearly, $S^{p,p}\left(\mathcal{G}_1, \mathcal{G}_2\right)=S^{p}\left(\mathcal{G}_1, \mathcal{G}_2\right)$. Moreover, see for example \cite{P2003}, we also have,
\begin{align}
S^{p_1,q_{1}}\left(\mathcal{G}_1, \mathcal{G}_2\right)\subset S^{p_2,q_2}\left(\mathcal{G}_1, \mathcal{G}_2\right)\quad \text{if}\quad p_1<p_2,
\end{align}
\begin{align}
S^{p,q_{1}}\left(\mathcal{G}_1, \mathcal{G}_2\right)\subset S^{p,q_2}\left(\mathcal{G}_1, \mathcal{G}_2\right)\quad \text{if}\quad q_1<q_2.
\end{align}
If $\mathcal{G}_1 = \mathcal{G}_2 = \mathcal{G}$, we will simply write $S^{p,q}(\mathcal{G},\mathcal{G}) = S^{p,q}(\mathcal{G})$.


 Suppose $w\in A_{2}$, which will be defined in the next section. It's easy to find that $[b,R_{j}]$ is bounded and compact on $L^{2}(w)$, if and only if $b$ is in the $\mathrm{BMO}$ space and $\mathrm{VMO}$ space; see for example \cite{CDW2019}. We now consider the Besov space $B_{n/p}^{p,p}(\mathbb{R}^n)$, $0<p<\infty$, defined as the set of $b\in L^1_{loc}(\mathbb R^n)$ such that
 $$\|b\|_{B_{n/p}^{p,p}(\mathbb{R}^n)}:=\bigg(\int_{\mathbb{R}^n}\int_{\mathbb{R}^n}\frac{|b(x)-b(y)|^{p}}{|x-y|^{2n}}dydx\bigg)^{\frac{1}{p}}<\infty.$$

{ Note that $B_{n/p}^{p,p}(\mathbb{R}^n) \subset {\rm{VMO}}$. Thus, for $b\in B_{n/p}^{p,p}(\mathbb{R}^n) $, $[b,R_{j}]$ is bounded and compact.}
Our first result then provides a characterization of when the commutator is in the weighted Schatten class $S_p(L^2(w))$ in terms of membership of the symbol in the Besov space $B_{n/p}^{p,p}(\mathbb{R}^n)$:

\begin{thm}\label{Athm1}
Suppose { $n>1$}, $0<p<\infty$, $w\in A_{2}$ and $b\in {\rm VMO}(\mathbb{R}^n)$. Then for any $j=1,2,\cdots,n$, the commutator $[b,R_{j}]\in S^{p}(L^{2}(w))$ if and only if

{\rm{(1)}} $ b\in B_{n/p}^{p,p}(\mathbb{R}^n)$, if $n<p<\infty$, we have $\|b\|_{B_{n/p}^{p,p}(\mathbb{R}^n)}\approx\|[b,R_{j}]\|_{S^{p}(L^{2}(w))}$;

{\rm{(2)}} $b$ is a constant when $0<p\leq n$.
\end{thm}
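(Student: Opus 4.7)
The plan is to treat both parts of the theorem via a unified dyadic framework. First I would use a Hyt\"onen-type representation to express $R_j$ as an average, over random dyadic grids $\mathcal D$, of dyadic shift operators of bounded complexity, and replace the Besov norm throughout by its equivalent dyadic oscillation expression
\begin{align*}
\|b\|^p_{B^{p,p}_{n/p}(\mathbb R^n)} \;\approx\; \sum_{Q\in\mathcal D}(\operatorname{osc}_Q b)^p,
\end{align*}
where $\operatorname{osc}_Q b = \inf_{c\in\mathbb R} \tfrac{1}{|Q|}\int_Q |b-c|$. The task then reduces to matching $\|[b,R_j]\|_{S^p(L^2(w))}^p$ against this cube sum.

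For the upper bound in the range $n<p<\infty$, I would expand $b$ in a weighted Haar-type basis adapted to $L^2(w)$, available because $w\in A_2$, and decompose the commutator of $b$ with each dyadic shift into a finite sum of paraproduct-type operators. Each such paraproduct is realized as a countable sum of rank-one operators indexed by dyadic cubes $Q$, with entries controlled by $\operatorname{osc}_Q b$; summing the Schatten contributions and averaging over grids yields
\begin{align*}
\|[b,R_j]\|^p_{S^p(L^2(w))} \;\lesssim\; \sum_Q (\operatorname{osc}_Q b)^p \;\approx\; \|b\|^p_{B^{p,p}_{n/p}}.
\end{align*}
The hypothesis $p>n$ is what ensures that the off-diagonal dyadic shift contributions are absolutely summable across scales.

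For the lower bound, needed in both regimes, I would adapt the Rochberg--Semmes nearly weakly orthonormal (NWO) construction from~\cite{RS1989} to the weighted Hilbert space $L^2(w)$. For each dyadic cube $Q$, fix a companion cube $\widetilde Q$ of comparable side length lying in a cone direction where the Riesz kernel $K_j(x,y) = c_n(x_j-y_j)/|x-y|^{n+1}$ has constant sign and magnitude $\approx |Q|^{-1}$ on $Q\times\widetilde Q$, and build $L^2(w)$-normalized bumps $\varphi_Q,\psi_{\widetilde Q}$ supported on $Q$ and $\widetilde Q$ respectively. The $A_2$ hypothesis is what guarantees that the families $\{\varphi_Q\}, \{\psi_{\widetilde Q}\}$ satisfy the NWO inequality in $L^2(w)$. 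A median decomposition of $b$ on $Q\cup\widetilde Q$, together with the non-degeneracy of $K_j$ on $Q\times\widetilde Q$, gives the testing lower bound $|\langle [b,R_j]\varphi_Q,\psi_{\widetilde Q}\rangle_w|\gtrsim \operatorname{osc}_Q b$, and applying the NWO duality principle yields
\begin{align*}
\sum_Q(\operatorname{osc}_Q b)^p \;\lesssim\; \|[b,R_j]\|^p_{S^p(L^2(w))}.
\end{align*}
Combined with the upper bound, this completes item~(1).

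The main obstacle is item~(2), the Janson--Wolff regime $0<p\leq n$. The NWO lower bound still delivers $\sum_Q(\operatorname{osc}_Q b)^p<\infty$, and the task is to upgrade this finiteness to the conclusion that $b$ is a.e.\ constant. Assuming $b$ is not, Lebesgue differentiation produces a cube $Q_0$ on which $b$ genuinely oscillates; the dyadic geometry of $\mathbb R^n$ then yields, at each fine scale $\ell$, roughly $\ell^{-n}$ sub-cubes of $Q_0$ whose oscillation is bounded below by $\ell^{\epsilon}$ for some fixed $\epsilon>0$, so the scale-$\ell$ contribution to the cube sum is at least $\ell^{\epsilon p - n}$, which diverges as $\ell\to 0$ precisely when $p\leq n$, contradicting finiteness of the sum. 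Executing this Janson--Wolff rigidity argument rigorously for every $b\in\mathrm{VMO}$ and uniformly in $w\in A_2$ is the most delicate point in the proof, and the hypothesis $n>1$ enters through the cone construction of companion cubes, which is meaningful only in dimension at least two.
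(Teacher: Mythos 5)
Your proposal mirrors the paper's architecture almost exactly: a dyadic representation of $R_j$, paraproduct decomposition of the commutator with each shift, NWO/median arguments for the lower bound, and a rigidity argument for Part (2). However, there are a few concrete gaps worth flagging.

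First, in Part (2), you invoke the NWO lower bound to get $\sum_Q(\operatorname{osc}_Q b)^p<\infty$ for every $0<p\le n$, but the Rochberg--Semmes duality inequality (Lemma \ref{eq-NWO} in the paper) only holds for $1<p<\infty$. For $p\le 1$ this step has no justification. The correct move is the one the paper makes: reduce to the single endpoint $p=n$ using the monotonicity $S^p(L^2(w))\subset S^q(L^2(w))$ for $p<q$, and then run the testing argument only at $p=n>1$. This also shows that the true role of the hypothesis $n>1$ is to guarantee that the critical exponent $p=n$ lies in the range where the NWO duality works -- it has nothing to do with the cone construction of companion cubes, which is perfectly meaningful in one dimension.

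Second, your rigidity step uses an oscillation lower bound ``$\gtrsim \ell^\epsilon$ for some fixed $\epsilon>0$,'' which does not produce the threshold $p\le n$. With roughly $\ell^{-n}$ cubes of oscillation $\gtrsim\ell^\epsilon$ at scale $\ell$, the per-scale contribution is $\ell^{\epsilon p-n}$, and summing over dyadic scales $\ell=2^{-k}$ diverges iff $\epsilon p-n\le 0$, i.e.\ iff $p\le n/\epsilon$. You need precisely $\epsilon=1$: the oscillation bound must be \emph{linear} in $\ell$, which is exactly what differentiability at a point with $\nabla b(x_0)\neq 0$ gives, and which the paper cites from \cite[Lemma 5.3]{FLL2021}. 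Also note that reasoning from Lebesgue differentiation for a generic nonconstant $b\in\mathrm{VMO}$ to a point of differentiability with nonzero gradient is not immediate; some smoothing/approximation is needed, and this should be stated rather than glossed over. Finally, in Part (1), the reason $p>n$ appears is not really ``absolute summability of off-diagonal shift contributions'': the NWO upper bound \eqref{eq-NWO1} and the paraproduct estimates hold for all $0<p<\infty$, and the restriction enters because the continuous Besov space $B^{p,p}_{n/p}$ (and its equivalence with the dyadic version, Lemma \ref{lemA1}) is only a meaningful nontrivial space when $p>n$.
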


In Theorem \ref{Athm1}, we note that there is a ``cut-off'' in the sense that the function space collapses to constants when $p$ is less than the critical index $p = n$ of the dimension. This suggests that at the endpoint $p=n$ there might be a more interesting phenomenon going on when one replaces membership in the Schatten-Lorentz space by its membership in weak-type versions.  This leads to the following result at the critical index.
\begin{thm}\label{Athm2}
Suppose { $n>1$}, $b\in {\rm VMO}(\mathbb{R}^n)$, $w\in A_{2}$. Then for any $j=1,2,\cdot\cdot\cdot,n$, the commutator $[b,R_{j}]\in S^{n,\infty}(L^{2}(w))$ if and only if $b\in \dot{W}^{1,n}(\mathbb{R}^n)$. More precisely,
$$\|b\|_{\dot{W}^{1,n}(\mathbb{R}^n)}\approx\|[b,R_{j}]\|_{S^{n,\infty}(L^{2}(w))}.$$
Here $\dot{W}^{1,n}(\mathbb{R}^n)$ is the homogeneous Sobolev space on $\mathbb R^n$ defined by $\dot{W}^{1,n}(\mathbb{R}^n)=\{ b\in (\mathcal S(\mathbb R^n))': \nabla b \in L^n(\mathbb R^n) \}$ with the seminorm
$\|b\|_{\dot{W}^{1,n}(\mathbb{R}^n)}=\|\nabla b \|_{ L^n(\mathbb R^n)}$.
\end{thm}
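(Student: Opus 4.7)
The plan is to prove the two directions separately, in analogy with the unweighted endpoint result of Lord--McDonald--Sukochev--Zanin \cite{LMSZ2017}, but with dyadic methods replacing Fourier-analytic arguments so as to accommodate the weight $w\in A_{2}$.

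For the sufficiency direction, assume $b\in \dot{W}^{1,n}(\mathbb{R}^n)$. I would first approximate $[b,R_{j}]$ by a family of dyadic model operators and then quantify the singular-value distribution. Using a Haar/martingale expansion adapted to a dyadic grid $\mathscr{D}$, write $b$ modulo constants as a sum over cubes and correspondingly decompose $[b,R_{j}]=\sum_{Q\in \mathscr{D}}T_{Q}$, where each piece $T_{Q}$ is essentially finite rank, concentrated near $Q$, with operator norm controlled by an $L^{n}$-type oscillation of $b$ on $Q$. A weighted Poincar\'e inequality—available because $w\in A_{2}$ satisfies a reverse H\"older inequality—converts the sum of $n$-th powers of these oscillations into $\|\nabla b\|_{L^{n}(\mathbb{R}^n)}^{n}$. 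The final step is to pass from a bound on $\sum_{Q}(\operatorname{osc}_{Q}b)^{n}$ to the counting estimate $\#\{k:s_{k}([b,R_{j}])>\lambda\}\lesssim \lambda^{-n}\|\nabla b\|_{L^{n}(\mathbb{R}^n)}^{n}$, which is precisely membership in $S^{n,\infty}(L^{2}(w))$ with the quantitative norm bound.

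For the necessity direction, I would adapt the Rochberg--Semmes construction of nearly weakly orthonormal sequences from \cite{RS1989}. For each cube $Q$ on which $b$ has substantial mean oscillation, pick a pair of localized test functions $(f_{Q}, g_{Q})$, normalized so that $\|f_{Q}\|_{L^{2}(w)}=\|g_{Q}\|_{L^{2}(w^{-1})}=1$, such that the pairing $\langle [b,R_{j}]f_{Q},\, g_{Q}\rangle$ is bounded below by a constant multiple of the oscillation of $b$ on $Q$. Since the families $\{f_{Q}\}$ and $\{g_{Q}\}$ are nearly orthonormal in their respective weighted spaces, the singular-value distribution of $[b,R_{j}]$ can be estimated from below by the number of such cubes, yielding $\#\{Q:\operatorname{osc}_{Q}b>\lambda\}\lesssim \lambda^{-n}\|[b,R_{j}]\|_{S^{n,\infty}(L^{2}(w))}^{n}$. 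A Dorronsoro-type limiting argument then relates the discrete sum of dyadic oscillations to $\int|\nabla b|^{n}\,dx$ and recovers the homogeneous Sobolev seminorm.

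The principal obstacle will be maintaining the nearly weakly orthonormal property in the weighted setting. In the unweighted case one normalizes everything in $L^{2}(\mathbb{R}^n)$, whereas here test functions naturally sit in $L^{2}(w)$ while their duals live in $L^{2}(w^{-1})$, and the $A_{2}$ condition supplies the required compatibility only after careful tracking of the weight inside the Riesz kernel pointwise estimates. A secondary difficulty is the endpoint nature of $p=n$: the Schatten sum from Theorem~\ref{Athm1} diverges in this regime, so the argument must be expressed throughout in terms of counting functions and distributional estimates for singular values rather than $\ell^{p}$-summability of the sequence $(s_{k}([b,R_{j}]))_{k}$.
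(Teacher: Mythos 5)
Your sufficiency argument contains a genuine gap at its center.\ You propose to use a weighted Poincar\'e inequality to show $\sum_{Q\in\mathscr{D}}(\operatorname{osc}_{Q}b)^{n}\lesssim\|\nabla b\|_{L^{n}(\mathbb{R}^n)}^{n}$ and then ``pass to the counting estimate.''\ But the left-hand sum is \emph{infinite} for every nonconstant $b$ with $\nabla b\in L^n$: Poincar\'e plus H\"older gives $(\operatorname{osc}_{Q}b)^{n}\lesssim\int_{Q}|\nabla b|^{n}$, and summing this over each dyadic generation $\mathscr{D}_k$ already gives $\|\nabla b\|_{L^n}^n$, so the sum over \emph{all} $k\in\mathbb{Z}$ diverges.\ If the $\ell^{n}$-sum were finite you would be proving $[b,R_j]\in S^n(L^2(w))$, which Theorem~\ref{Athm1}(2) shows forces $b$ constant; there would be no counting estimate to ``pass to.''\ What is actually needed is the much more delicate weak-type estimate $\|\{\operatorname{osc}_{\alpha}(b,Q)\}_{Q\in\mathscr{D}}\|_{\ell^{n,\infty}}\lesssim\|\nabla b\|_{L^{n}}$.\ This is a nontrivial characterization of $\dot W^{1,n}$ (Rochberg--Semmes; in the clean $\mathbb{R}^n$ formulation it is Frank's theorem \cite[Theorem 1 and Remark (d)]{F2022}), and no Poincar\'e inequality alone delivers it.\ The paper's sufficiency proof is built around precisely this theorem: the commutator kernel is decomposed via a dyadic Whitney decomposition of $\Omega=\{x\neq y\}$ into products $Q\times R_{Q,s}$, each piece is expanded in a double Fourier series with rapidly decaying coefficients, the oscillations $\operatorname{osc}_\alpha(b,Q)$ appear as scalar weights in an NWO representation (the weight $w$ is absorbed into the NWO test functions via reverse H\"older), and then one invokes \eqref{eq-NWO1} plus Frank's $\ell^{n,\infty}$ characterization.\ A Haar/martingale expansion of the type you propose, which is the correct tool at $p>n$ in Section~\ref{Proof1}, is not the decomposition the paper uses here.

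Your necessity sketch is closer in spirit, but as stated it does not actually produce the $\ell^{n,\infty}$ lower bound.\ The Rochberg--Semmes NWO inequality quoted as Lemma~\ref{eq-NWO} only applies to the Schatten classes $S^p$ for $1<p<\infty$ and gives no Lorentz information; saying that ``the singular-value distribution can be estimated from below by the number of such cubes'' because the test families are NWO is not a theorem you can cite.\ The paper instead builds, for an arbitrary scalar sequence $\{a_{Q}\}\in\ell^{n/(n-1),1}$, an explicit test operator $L=\sum_{Q}a_{Q}L_{Q}$ from Fourier-series NWO building blocks with $\|L\|_{S^{n/(n-1),1}(L^2(w))}\lesssim\|a_Q\|_{\ell^{n/(n-1),1}}$, then shows $\operatorname{Trace}\!\left(w^{1/2}[b,R_{j}]L_{Q}w^{-1/2}\right)\gtrsim M(b,Q)$ via the median trick of Lemma~\ref{lemA4}, and finally uses the trace duality $|\operatorname{Trace}(AB)|\le\|A\|_{S^{n,\infty}}\|B\|_{S^{n/(n-1),1}}$ together with the sequence-space duality $(\ell^{n/(n-1),1})^{*}=\ell^{n,\infty}$ to conclude $\|M(b,\cdot)\|_{\ell^{n,\infty}}\lesssim\|[b,R_j]\|_{S^{n,\infty}(L^2(w))}$, after which Frank's theorem is invoked again.\ Your ``Dorronsoro-type limiting argument'' is pointing at the right ingredient, but it must be this explicit $S^{n,\infty}$--$S^{n/(n-1),1}$ duality, not a pure NWO counting argument, that converts the trace lower bounds into a bound on the weak-$\ell^n$ quasinorm of the oscillations.
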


Once we have this, we provide a new application to the quantised derivative of Connes.  This yields the following result:

\begin{thm}
Suppose $n>1$,  $f\in {\rm VMO}(\mathbb{R}^n)$, $w\in A_{2}$. Then $\bar{d} f\in S^{n,\infty}(\mathbb{C}^N \otimes L^2\left(w\right))$ if and only if $f\in \dot{W}^{1,n}(\mathbb{R}^n)$. Moreover,
$$
 \|\bar{d} f\|_{S^{n,\infty}(\mathbb{C}^N \otimes L^2\left(w\right))}\approx \|f\|_{ \dot{W}^{1,n}(\mathbb{R}^n)}.
$$
\end{thm}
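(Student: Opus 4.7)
The plan is to reduce the statement directly to Theorem \ref{Athm2} by exploiting the Clifford-algebraic structure of $\mathrm{sgn}(\mathcal{D})$. With $N=2^{\lfloor n/2\rfloor}$, the Dirac operator acts on $\mathbb{C}^N\otimes L^2(\mathbb{R}^n)$ and its sign admits the explicit decomposition
\begin{equation*}
\mathrm{sgn}(\mathcal{D})\;=\;\sum_{j=1}^n \gamma_j\otimes R_j,
\end{equation*}
where $\gamma_1,\dots,\gamma_n$ are self-adjoint $N\times N$ Clifford matrices satisfying $\gamma_j\gamma_k+\gamma_k\gamma_j=2\delta_{jk}I_N$. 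Since each $\gamma_j$ commutes with $1\otimes M_f$, this immediately gives
\begin{equation*}
\bar{d}f\;=\;i\,[\mathrm{sgn}(\mathcal{D}),\,1\otimes M_f]\;=\;i\sum_{j=1}^n\gamma_j\otimes[R_j,M_f],
\end{equation*}
and the problem is reduced to comparing the weak-Schatten quasi-norm of this Clifford-valued commutator to the individual $\|[R_j,M_f]\|_{S^{n,\infty}(L^2(w))}$.

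For the upper bound, I would apply the quasi-triangle inequality in $S^{n,\infty}$, together with $\|\gamma_j\|_{\mathrm{op}}=1$, to obtain
\begin{equation*}
\|\bar{d}f\|_{S^{n,\infty}(\mathbb{C}^N\otimes L^2(w))}\;\lesssim\;\sum_{j=1}^n\|[R_j,M_f]\|_{S^{n,\infty}(L^2(w))}\;\lesssim\;\|f\|_{\dot{W}^{1,n}(\mathbb{R}^n)},
\end{equation*}
where the last inequality is precisely Theorem \ref{Athm2}.

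For the matching lower bound, I would use the anticommutation relations to isolate individual Riesz commutators. A direct computation gives, for each $k$,
\begin{equation*}
(\gamma_k\otimes I)\,\bar{d}f+\bar{d}f\,(\gamma_k\otimes I)\;=\;i\sum_{j=1}^n(\gamma_k\gamma_j+\gamma_j\gamma_k)\otimes[R_j,M_f]\;=\;2i\,I_N\otimes[R_k,M_f].
\end{equation*}
Since $\|\gamma_k\otimes I\|_{\mathrm{op}}=1$ and the singular values of $I_N\otimes T$ are those of $T$, each with multiplicity $N$ (so $\|I_N\otimes T\|_{S^{n,\infty}}\approx N^{1/n}\|T\|_{S^{n,\infty}}$), passing to the weak-Schatten norm yields
\begin{equation*}
\|[R_k,M_f]\|_{S^{n,\infty}(L^2(w))}\;\lesssim\;\|\bar{d}f\|_{S^{n,\infty}(\mathbb{C}^N\otimes L^2(w))}.
\end{equation*}
A further appeal to Theorem \ref{Athm2} then delivers $\|f\|_{\dot{W}^{1,n}(\mathbb{R}^n)}\lesssim\|\bar{d}f\|_{S^{n,\infty}(\mathbb{C}^N\otimes L^2(w))}$.

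The only genuine obstacle is bookkeeping: fixing the Clifford decomposition of $\mathrm{sgn}(\mathcal{D})$ (whose details the authors defer to the last section) and verifying the elementary but indispensable tensor identity $\|I_N\otimes T\|_{S^{n,\infty}}\approx N^{1/n}\|T\|_{S^{n,\infty}}$. Once these ingredients are in place, the theorem is a clean corollary of the scalar characterization in Theorem \ref{Athm2}.
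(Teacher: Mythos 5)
Your proposal is correct and follows essentially the same route as the paper: rewrite $\operatorname{sgn}(\mathcal{D})=\sum_j\gamma_j\otimes R_j$, deduce $\bar{d}f=i\sum_j\gamma_j\otimes[R_j,M_f]$, and then reduce both inequalities to Theorem \ref{Athm2}. The paper simply states that the result then "follows from Theorem \ref{Athm2}," whereas you supply the two-sided bookkeeping explicitly (quasi-triangle inequality for the upper bound, anticommutation relations plus the tensor identity $\|I_N\otimes T\|_{S^{n,\infty}}\approx N^{1/n}\|T\|_{S^{n,\infty}}$ for the lower bound), which is the right way to fill in what the paper leaves implicit.
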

Details of the proof of the application and how it follows immediately from Theorem \ref{Athm2} are given in Section \ref{application}.  The remainder of this paper is organized as follows. Section \ref{Prelimineries} provides preliminary background information and notation.  The proof of Theorem \ref{Athm1} is started in Section \ref{Proof1} where the proof of (1) in Theorem \ref{Athm1} is given. In Section \ref{Proof2}, we give the proof of (2) Theorem \ref{Athm1}, and Section \ref{Proof3} provides the proof of Theorem \ref{Athm2}.{  In Section \ref{one dimensional}, we discuss the one-dimensional case.}

Throughout this paper, using $A\lesssim B$ and $A\gtrsim B$ to denote the statement that $A \leq CB$ and $A\geq CB$ for some constant $C > 0$, and $A \approx B$ to denote the statement that $A \lesssim B$ and $B \gtrsim A$. the letter $``C"$ will denote a positive constant whose value can change at each appearance. Moreover, the notation ``$\wedge$" will denote the Fourier transform. As usual, for $p\geq 1$, $ \frac{1}{p}+\frac{1}{p'}=1$. $\ell(Q)$ denote the sidelength of $Q$.

\section{Preliminaries}\label{Prelimineries}
\subsection{$A_2$ weights}
We now recall the definition of Muckenhoupt weights.

\begin{defi}  Let $w(x)$ be a nonnegative locally integrable function on $\mathbb{R}^n$. We say $w$ is an $A_2$ weight, written $w \in A_2(\mathbb{R}^{n})$, if
$$
[w]_{A_2}:=\sup _Q \frac{1}{|Q|} \int_Q w(x) d x \cdot \frac{1}{|Q|} \int_Q w(x)^{-1} d x<\infty .
$$
Here the suprema are taken over all cubes $Q \subset \mathbb{R}^n$. The quantity $[w]_{A_2}$ is called the $A_2$ constant of $w$.
\end{defi}
It is well known that $A_2$ weights are doubling. Namely,
\begin{lem}[\cite{G2014}] \label{double}Let $w \in A_2$. Then for every $\lambda>1$ and for every cube $Q \subset \mathbb{R}^n$,
$$
w(\lambda Q) \lesssim \lambda^{2n} w(Q).
$$
\end{lem}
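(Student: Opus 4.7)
The plan is to derive the doubling estimate directly from the $A_2$ definition via a Cauchy–Schwarz argument, with no appeal to any deeper theory (such as reverse Hölder or $A_\infty$).

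First, I would record the elementary consequence of Cauchy–Schwarz that for any cube $Q'$ and any measurable subset $E \subset Q'$,
\begin{equation*}
|E| = \int_E w(x)^{1/2}\, w(x)^{-1/2}\,dx \le \Bigl(\int_E w\,dx\Bigr)^{1/2}\Bigl(\int_E w^{-1}\,dx\Bigr)^{1/2}\le w(E)^{1/2}\Bigl(\int_{Q'} w^{-1}\,dx\Bigr)^{1/2}.
\end{equation*}
Squaring and combining this with the $A_2$ bound
\begin{equation*}
\int_{Q'} w\,dx \cdot \int_{Q'} w^{-1}\,dx \le [w]_{A_2}\,|Q'|^2,
\end{equation*}
one obtains the standard ``$A_2$ comparability'' inequality
\begin{equation*}
\Bigl(\frac{|E|}{|Q'|}\Bigr)^{2} \le [w]_{A_2}\,\frac{w(E)}{w(Q')}.
\end{equation*}

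Next, I would specialize this to $Q' = \lambda Q$ and $E = Q$, so $|E|/|Q'| = \lambda^{-n}$. Plugging in and rearranging gives
\begin{equation*}
w(\lambda Q) \le [w]_{A_2}\,\lambda^{2n}\,w(Q),
\end{equation*}
which is exactly the claimed doubling estimate with implicit constant $[w]_{A_2}$.

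The argument is essentially a two-line calculation, so there is no serious obstacle; the only thing to be careful about is the direction in which Cauchy–Schwarz is applied (distributing $1 = w^{1/2}w^{-1/2}$ so that both the $A_2$ constant and the relative size of $E$ in $Q'$ appear on the correct side). Since the proof only uses the definition of $A_2$, the constant in $\lesssim$ can be taken to depend only on $[w]_{A_2}$, which is all that is needed for the later applications in the paper.
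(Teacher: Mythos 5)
Your proof is correct: the Cauchy--Schwarz step and the $A_2$ condition combine exactly as you write to give $(|E|/|Q'|)^2 \le [w]_{A_2}\, w(E)/w(Q')$, and taking $E=Q$, $Q'=\lambda Q$ yields $w(\lambda Q)\le [w]_{A_2}\lambda^{2n}w(Q)$. The paper itself gives no proof but cites \cite{G2014}, and your argument is precisely the standard one found there (the $p=2$ case of the comparability inequality for $A_p$ weights), so there is nothing to add.
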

In this article, we will also use the reverse H\"{o}lder inequality for $A_2$ weights.
\begin{lem}[\cite{G2014}]\label{reverse}
Let $w\in A_{2}$. There is a reverse doubling index $\sigma_{w}>0$, such that for every cube $Q\subset \mathbb{R}^n$ yield that
\begin{align}
\bigg[\frac{1}{|Q|}\int_{Q}w^{1+\sigma_{w}}(x)dx  \bigg]^{\frac{1}{1+\sigma_{w}}}\lesssim \frac{w(Q)}{|Q|}.
\end{align}
\end{lem}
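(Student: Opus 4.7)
The plan is to derive the reverse H\"older self-improvement of the $A_2$ condition via the classical two-step argument: first deduce a quantitative $A_\infty$-type comparison from $A_2$, then iterate it into an exponentially decaying level-set bound through a Calder\'on--Zygmund stopping-time construction on $w$, and finally integrate via layer cake. Fix a cube $Q$ and write $\alpha_0 = w(Q)/|Q|$.

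For the first step I would note that the $A_2$ condition together with Cauchy--Schwarz yields the fundamental absolute-continuity comparison
\[
\frac{|E|}{|Q|} \;\leq\; [w]_{A_2}^{1/2}\Bigl(\frac{w(E)}{w(Q)}\Bigr)^{1/2}
\qquad \text{for every measurable } E\subset Q,
\]
obtained by writing $|E| = \int_E w^{1/2}w^{-1/2}\,dx$ and bounding the right-hand side by $(\int_E w)^{1/2}(\int_Q w^{-1})^{1/2}$. Equivalently, there exists $\eta = \eta([w]_{A_2})\in (0,1)$ (concretely $\eta = 1/(4[w]_{A_2})$) such that every $E\subset Q$ with $|E|\geq \tfrac{1}{2}|Q|$ satisfies $w(E)\geq \eta\, w(Q)$; this is the quantitative $A_\infty$ estimate I need.

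For the second step I would perform the dyadic Calder\'on--Zygmund decomposition of $w$ on $Q$ at heights $2^k\alpha_0$ for $k\geq 1$, producing maximal disjoint stopping cubes $\{Q_j^k\}$ with $2^k\alpha_0 < |Q_j^k|^{-1}\int_{Q_j^k} w \leq 2^{n+k}\alpha_0$, and set $\Omega_k = \bigcup_j Q_j^k$, so that Lebesgue differentiation gives $w\leq 2^k\alpha_0$ a.e.\ on $Q\setminus \Omega_k$. The $A_\infty$ estimate above, applied inside each parent cube of generation $k$ to the set $\Omega_{k+1}\cap Q_j^k$, forces $|\Omega_{k+1}|\leq \theta |\Omega_k|$ for a constant $\theta = \theta([w]_{A_2},n)<1$, and hence $|\Omega_k|\leq \theta^{k-1}|Q|$. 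A layer-cake decomposition then gives
\[
\int_Q w^{1+\sigma}\,dx \;\leq\; \alpha_0^{1+\sigma}|Q| + \sum_{k\geq 1}(2^{n+k}\alpha_0)^{1+\sigma}|\Omega_k|
\;\lesssim\; \alpha_0^{1+\sigma}|Q|\sum_{k\geq 0}\bigl(2^{1+\sigma}\theta\bigr)^k,
\]
and choosing $\sigma_w>0$ small enough (depending only on $n$ and $[w]_{A_2}$) that $2^{1+\sigma_w}\theta<1$ makes the series converge, yielding the asserted bound $\bigl(|Q|^{-1}\int_Q w^{1+\sigma_w}\bigr)^{1/(1+\sigma_w)}\lesssim w(Q)/|Q|$.

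The main obstacle is the geometric-decay step: one must extract a concrete $\theta<1$ from the $A_2$ condition, uniformly over all stopping cubes and all generations, without circularity. This requires marrying the Calder\'on--Zygmund parent-cube control (which keeps the averages bounded above by $2^{n+k}\alpha_0$) with the quantitative $A_\infty$ estimate from step one, so that the decay factor $\theta$ and the admissible exponent $\sigma_w$ ultimately depend only on $n$ and $[w]_{A_2}$, as the conclusion requires.
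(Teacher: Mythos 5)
The paper does not actually prove this lemma; it is quoted from \cite{G2014}, and your outline follows the standard Calder\'on--Zygmund proof of the reverse H\"older inequality, so the architecture is the right one. Your Step 1 is correct: Cauchy--Schwarz plus the $A_2$ condition gives $|E|/|Q|\le [w]_{A_2}^{1/2}\,(w(E)/w(Q))^{1/2}$ for $E\subset Q$, hence $|E|\ge\tfrac12|Q|$ implies $w(E)\ge\eta\,w(Q)$ with $\eta=1/(4[w]_{A_2})$.

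The gap is in the geometric-decay step, and it is caused by your choice of levels $\lambda_k=2^k\alpha_0$. The only a priori control on $|\Omega_{k+1}\cap Q_j^k|$ is Chebyshev together with the parent bound: $|\Omega_{k+1}\cap Q_j^k|\le\lambda_{k+1}^{-1}w(Q_j^k)\le 2^n(\lambda_k/\lambda_{k+1})|Q_j^k|=2^{n-1}|Q_j^k|$, which is vacuous for $n\ge1$; so you never know that $\Omega_{k+1}$ fills at most half of each $Q_j^k$. Moreover, your Step-1 inequality cannot supply the claimed decay $|\Omega_{k+1}|\le\theta|\Omega_k|$, because it goes in the wrong direction: applied to $E=\Omega_{k+1}\cap Q_j^k$ it bounds the Lebesgue ratio $t$ by a constant times the square root of the $w$-ratio, and the only available bound on the $w$-ratio (from $w(\Omega_{k+1}\cap Q_j^k)\le 2^n\lambda_{k+1}|\Omega_{k+1}\cap Q_j^k|$ and $w(Q_j^k)>\lambda_k|Q_j^k|$) collapses to $t\lesssim t^{1/2}$, which gives no $\theta<1$. (Note also that even granted some $\theta<1$, your convergence condition $2^{1+\sigma}\theta<1$ secretly requires $\theta<1/2$.) The standard fix: take $\lambda_k=(2^{n+1})^k\alpha_0$ (any ratio larger than $2^n$ works). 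Then Chebyshev gives $|\Omega_{k+1}\cap Q_j^k|\le\tfrac12|Q_j^k|$; now apply your Step-1 estimate to the complement $S=Q_j^k\setminus\Omega_{k+1}$, which satisfies $|S|\ge\tfrac12|Q_j^k|$, to get $w(S)\ge\eta\,w(Q_j^k)$ and hence $w(\Omega_{k+1}\cap Q_j^k)\le(1-\eta)w(Q_j^k)$; summing over $j$ yields the decay in the \emph{weighted} measure, $w(\Omega_{k+1})\le(1-\eta)w(\Omega_k)$, which is what the layer cake actually needs. Indeed, since $w\le\lambda_{k+1}$ a.e.\ on $Q\setminus\Omega_{k+1}$,
\[
\int_Q w^{1+\sigma}\,dx\le\lambda_1^{\sigma}w(Q)+\sum_{k\ge1}\lambda_{k+1}^{\sigma}w(\Omega_k)
\lesssim\alpha_0^{\sigma}w(Q)\sum_{k\ge0}\bigl((2^{n+1})^{\sigma}(1-\eta)\bigr)^{k},
\]
and any $\sigma_w>0$ with $(2^{n+1})^{\sigma_w}(1-\eta)<1$ (which exists because $\eta>0$ depends only on $[w]_{A_2}$) gives $\frac{1}{|Q|}\int_Q w^{1+\sigma_w}\lesssim\bigl(\frac{w(Q)}{|Q|}\bigr)^{1+\sigma_w}$, i.e.\ the lemma, with constants depending only on $n$ and $[w]_{A_2}$.
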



\subsection{Dyadic system in $\mathbb{R}^n$}

\begin{defi}
Let the collection $\mathscr{D}^{0}=\mathscr{D}^{0}(\mathbb{R}^n)$ denote the standard system of dyadic cubes on $\mathbb{R}^n$, where
 $$\mathscr{D}^{0}(\mathbb{R}^n)=\bigcup_{k \in \mathbb{Z}} \mathscr{D}^{0}_{k}(\mathbb{R}^n)$$ with
 $$ \mathscr{D}^{0}_{k}(\mathbb{R}^n)=\left\{2^{-k}([0,1)^{n}+m):k\in\mathbb{Z},m\in\mathbb{Z}^{n}\right\} .$$
\end{defi}
Next, we recall a shifted dyadic systems of dyadic cubes on $\mathbb{R}^n$.
\begin{defi}[\cite{HK2012}]  For { $\omega=(\omega_1,\omega_2,\cdots,\omega_n)\in \{0,\frac{1}{3},\frac{2}{3}\}^{n}$}, we can define a bounded number of adjacent dyadic system  $\mathscr{D}^{\omega}=\mathscr{D}^{\omega}(\mathbb{R}^n)$,
 $$
 \mathscr{D}^{\omega}(\mathbb{R}^n)=\bigcup_{k \in \mathbb{Z}} \mathscr{D}^{\omega}_{k}(\mathbb{R}^n),
 $$
 where
 $$  \mathscr{D}^{\omega}_{k}(\mathbb{R}^n)=\left\{2^{-k}\left([0,1)^{n}+m+(-1)^{k}\omega\right):k\in\mathbb{Z},m\in\mathbb{Z}^{n}\right\} .$$
\end{defi}
It is straightforward to check that $\mathscr{D}^{\omega}$ inherits the nestedness property of $\mathscr{D}^{0}$ : if $Q, Q' \in \mathscr{D}^{\omega}$, then $Q \cap Q' \in\{Q, Q', \varnothing\}$(See \cite{HK2012} for more details). When the particular $\omega$ is unimportant, the notation $\mathscr{D}$ is sometimes used for a generic dyadic system.

\subsection{An expression of Haar functions}So let's recall Haar basis on $\mathbb{R}^{n}$.
{  For any dyadic cube $Q\in \mathscr{D}$, there exist dyadic intervals $I_1, I_2, \cdots, I_n$ on $\mathbb{R}$ with common length $l(Q)$, such that $Q=I_1 \times I_2, \cdots\times I_n$. Then $Q$ is associated with $2^n$ Haar functions:
	$$h^{\epsilon}_{Q}(x):=h_{I_1\times I_2\times\cdots\times I_n}^{(\epsilon_1, \epsilon_2,\cdots, \epsilon_n)}(x_1,x_2,\cdots,x_n):=\prod_{i=1}^{n}h^{(\epsilon_i)}_{I_i}(x_i)$$
	where $\epsilon = (\epsilon_1,\epsilon_2,\cdot\cdot\cdot,\epsilon_n)\in \{0,1\}^{n}$ and 
	$$ h^{(1)}_{I_i}:=  \frac{1}{\sqrt{I_i}}\chi_{I_i}  \qquad\text{and}\qquad h^{(0)}_{I_i}:=  \frac{1}{\sqrt{I_i}}(\chi_{I_i-}-\chi_{I_i+})$$
 Writing $\epsilon\equiv 1$ when $\epsilon_{i}\equiv1$ for all $i=1,2,\cdot\cdot\cdot,n$, $h^{1}_{Q}:=\frac{1}{\sqrt{Q}}\chi_Q$ is non-cancellative; on the other hand, when $\epsilon\not\equiv1$,
the rest of the $2^{n}-1$ Haar functions $h^{\epsilon}_{Q}$ associated with $Q$ satisfy the following properties:} 
\begin{lem}\label{haar} For $\epsilon\not\equiv1$, we have

{\rm(1)} $h^{\epsilon}_{Q}$ is supported on $Q$ and $\int_{\mathbb{R}^n}h^{\epsilon}_{Q}(x)dx=0$;

{\rm(2)} $h^{\epsilon}_{Q}$ is constant on each $R\in Ch(Q)$, where $Ch(Q)=\{R\in \mathscr{D}_{k+1}:R\subseteq Q\}$ denotes the dyadic sub-cubes the cube $Q\in \mathscr{D}_k$ ;

{\rm(3)} $\langle h^{\epsilon}_{Q},h^{\eta}_{Q}\rangle=0$, for $\epsilon\not\equiv\eta$;

{\rm(4)} if $h^{\epsilon}_{Q}\neq0$, then$$\|h^{\epsilon}_{Q}\|_{L^{p}(\mathbb{R}^{n})}\approx |Q|^{\frac{1}{p}-\frac{1}{2}}   \quad\text{for}\quad 1\leq p\leq\infty; $$

{\rm(5)} $ \|h^{\epsilon}_{Q}\|_{L^{1}(\mathbb{R}^{n})}\cdot\|h^{\epsilon}_{Q}\|_{L^{\infty}(\mathbb{R}^{n})}= 1;$


{\rm(6)} noting that the average of a function $b$ over a dyadic cube $Q$:
$$ \langle b\rangle_{Q}:=\frac{1}{|Q|}\int_{Q}b(x)dx $$ can be expressed as:
$$ \langle b\rangle_{Q}=\sum_{\substack{P\in \mathscr{D}, Q\subsetneq P\\\epsilon\not\equiv1}}\langle b, h^{\epsilon}_{P}\rangle h^{\epsilon}_{P}(Q). $$
where $h^{\epsilon}_{P}(Q)$ is a constant.

{\rm(7)} fixing a cube $Q$, and expanding $b$ in the Haar basis, we have
$$(b(x)- \langle b\rangle_{Q})\chi_{Q}(x)=\sum_{\substack{R\in \mathscr{D},R\subset Q\\\epsilon\not\equiv1}}\langle b, h^{\epsilon}_{R}\rangle h^{\epsilon}_{R};$$

\end{lem}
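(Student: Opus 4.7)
The plan is to reduce all seven properties to elementary one-dimensional facts by exploiting the tensor-product form $h^\epsilon_Q(x)=\prod_{i=1}^n h^{(\epsilon_i)}_{I_i}(x_i)$, so that Fubini and a few lines of bookkeeping handle (1)--(5), while (6) and (7) reduce to standard telescoping identities for conditional expectations on the dyadic filtration.

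For (1), the support assertion is immediate since each factor $h^{(\epsilon_i)}_{I_i}$ is supported on $I_i$. The mean-zero property uses the hypothesis $\epsilon\not\equiv1$: pick $i_0$ with $\epsilon_{i_0}=0$; then $\int_{I_{i_0}}h^{(0)}_{I_{i_0}}=|I_{i_0}|^{-1/2}\int_{I_{i_0}}(\chi_{I_{i_0,-}}-\chi_{I_{i_0,+}})=0$, and applying Fubini to the product kills the entire integral. For (2), each one-dimensional factor takes only two values on $I_i$, determined by which dyadic half of $I_i$ contains $x_i$, so the product depends only on which dyadic child $R\in Ch(Q)$ contains $x$. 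For (3), Fubini gives $\langle h^\epsilon_Q,h^\eta_Q\rangle=\prod_{i=1}^n\int_{I_i}h^{(\epsilon_i)}_{I_i}h^{(\eta_i)}_{I_i}$, and in any coordinate where $\epsilon_i\neq\eta_i$ the corresponding factor $\int h^{(0)}_{I_i}h^{(1)}_{I_i}$ vanishes. For (4) and (5), on its support one has $|h^\epsilon_Q(x)|=\prod_i|I_i|^{-1/2}=|Q|^{-1/2}$, so $\|h^\epsilon_Q\|_{L^p}=|Q|^{1/p-1/2}$ for $1\le p<\infty$ and $\|h^\epsilon_Q\|_{L^\infty}=|Q|^{-1/2}$, whence (5) is immediate.

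For (6) and (7), the plan is to invoke the dyadic martingale decomposition. Fix $Q$ and write $E_k b$ for the conditional expectation with respect to the $\sigma$-algebra generated by $\mathscr{D}_k$, so that $(E_k b)(x)=\langle b\rangle_{P}$ on each $P\in\mathscr{D}_k$. By (2) and (3), the difference $E_{k+1}b-E_k b$ is, on each $P\in\mathscr{D}_k$, a mean-zero function constant on the children of $P$, and hence expands as $\sum_{\epsilon\not\equiv1}\langle b,h^\epsilon_P\rangle h^\epsilon_P$. For (7), let $Q\in\mathscr{D}_{k_0}$; telescoping
\begin{equation*}
(b-\langle b\rangle_Q)\chi_Q=\sum_{k\ge k_0}(E_{k+1}b-E_k b)\chi_Q=\sum_{\substack{R\in\mathscr{D},R\subset Q\\\epsilon\not\equiv1}}\langle b,h^\epsilon_R\rangle h^\epsilon_R,
\end{equation*}
with $L^2(Q)$-convergence by the martingale convergence theorem (and pointwise a.e.\ for $b\in L^1_{\rm loc}$). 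For (6), telescoping in the opposite direction along the ancestor chain $Q=P_0\subsetneq P_1\subsetneq\cdots$ yields $\langle b\rangle_Q=\lim_{N\to\infty}\langle b\rangle_{P_N}+\sum_{k\ge1}\sum_{\epsilon\not\equiv1}\langle b,h^\epsilon_{P_k}\rangle h^\epsilon_{P_k}(Q)$, which gives the stated identity under the usual normalization that makes the averages over arbitrarily large cubes vanish.

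The routine part is (1)--(5), which are verifications. The only genuine issue is clarifying the sense of convergence in (6): as written the sum runs over all dyadic ancestors of $Q$, and one needs $\langle b\rangle_{P_N}\to 0$ as $N\to\infty$, which is the customary convention when working with functions modulo constants (e.g.\ for $\mathrm{VMO}$ or $\mathrm{BMO}$ symbols and for test functions decaying at infinity). Once that convention is made explicit, (6) follows from the same telescoping used for (7), and the whole lemma assembles with no further input.
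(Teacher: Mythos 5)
Your proposal is correct, and in fact the paper states Lemma \ref{haar} without any proof (it is presented as a recollection of standard Haar-basis facts), so your argument supplies exactly the routine verification the authors omit: Fubini and the tensor-product structure $h^\epsilon_Q=\prod_i h^{(\epsilon_i)}_{I_i}$ for (1)--(5) (you correctly use the paper's convention that $\epsilon_i=0$ is the cancellative factor), and the dyadic martingale telescoping $E_{k+1}b-E_kb=\sum_{\epsilon\not\equiv1}\langle b,h^\epsilon_P\rangle h^\epsilon_P$ on each $P\in\mathscr{D}_k$ for (6)--(7), where the expansion of the martingale difference follows from the dimension count on child-constant, mean-zero functions together with (1)--(3). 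Your caveat about (6) is a genuine and worthwhile observation: as stated the identity fails for, say, $b\equiv 1$, and one needs $\langle b\rangle_{P_N}\to 0$ along the ancestor chain (a normalization modulo constants, or decay/VMO-type hypotheses); this is harmless for the paper, since (6) only ever enters through differences of averages such as $\langle b\rangle_{\sigma(Q)}-\langle b\rangle_{Q}$, where the telescoping is finite and the boundary term cancels, but it is right to make the convention explicit. The only minor imprecision is the claim of $L^2(Q)$-convergence in (7), which requires $b\in L^2_{\rm loc}$; for $b\in L^1_{\rm loc}$ the pointwise a.e.\ convergence via the dyadic Lebesgue differentiation theorem, which you also invoke, is what is needed.
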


\subsection{Characterization of Schatten Class}
In 1989, Rochberg and Semmes \cite{RS1989} introduced the notion
of nearly weakly orthogonal ({\rm NWO}) sequences of functions. 
{ \begin{defi}
	Let $\{e_{Q}\}_{Q\in \mathscr{D}}$ be a collection of functions.  We say $\{e_Q\}_{Q\in \mathscr{D}}$ is {\rm NWO} sequences, if  {\rm supp}$\{e_Q\}\subset Q$ and the maximal function $f^*$ is bounded on $L^{p}(\mathbb{R}^n)$, where $f^*$ is defined as
	$$f^*(x)=\sup_{Q}\frac{|\langle f,e_Q \rangle|}{|Q|^{1/2}}\chi_{Q}(x)$$
\end{defi}

In this paper, we work with weighted versions.
We will use the following result proved by Rochberg and Semmes.}
\begin{lem}[\cite{RS1989}]
If the collection of functions $\left\{e_Q: Q \in \mathscr{D}\right\}$ are supported on $Q$ and satisfy for some $2<r<\infty,\left\|e_Q\right\|_r \lesssim|Q|^{1 / r-1 / 2}$, then $\left\{e_Q\right\}_{Q\in \mathscr{D}}$ is {\rm NWO} sequences.
\end{lem}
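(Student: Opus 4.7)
The plan is to dominate the NWO maximal function $f^{*}$ pointwise by a Hardy--Littlewood-type maximal function, and then to invoke the standard $L^{p}$ theory of the Hardy--Littlewood maximal operator. The whole argument is essentially a single application of H\"older's inequality, so I do not expect any real obstacle; the one thing to be careful about is tracking the powers of $|Q|$ so that the normalized pairing $|\langle f, e_Q\rangle|/|Q|^{1/2}$ converts cleanly into an average of $|f|^{r'}$ over $Q$.

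First I would fix $x\in\mathbb{R}^{n}$ and any dyadic cube $Q\ni x$. Since $e_Q$ is supported in $Q$, H\"older's inequality with the conjugate pair $(r,r')$ together with the standing hypothesis $\|e_Q\|_{r}\lesssim |Q|^{1/r-1/2}$ gives
\[
|\langle f, e_Q\rangle| \;\leq\; \|f\chi_Q\|_{r'}\,\|e_Q\|_{r} \;\lesssim\; \|f\chi_Q\|_{r'}\,|Q|^{1/r-1/2}.
\]
Dividing by $|Q|^{1/2}$ and using $\tfrac{1}{r}+\tfrac{1}{r'}=1$ to collapse the powers of $|Q|$ yields
\[
\frac{|\langle f, e_Q\rangle|}{|Q|^{1/2}} \;\lesssim\; \left(\frac{1}{|Q|}\int_{Q}|f|^{r'}\right)^{1/r'}.
\]

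Next, the right-hand side is bounded uniformly over $Q\ni x$ by $\bigl(M(|f|^{r'})(x)\bigr)^{1/r'}$, where $M$ denotes the (dyadic) Hardy--Littlewood maximal operator. Taking the supremum over all dyadic $Q\ni x$ in the definition of $f^{*}$ then gives the pointwise domination
\[
f^{*}(x) \;\lesssim\; \bigl(M(|f|^{r'})(x)\bigr)^{1/r'}.
\]

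Finally, since $r>2$ forces $r'\in(1,2)$, the classical $L^{p/r'}$-boundedness of $M$, valid whenever $p/r'>1$ (that is, $p>r'$), gives
\[
\|f^{*}\|_{L^{p}(\mathbb{R}^{n})} \;\lesssim\; \bigl\|M(|f|^{r'})\bigr\|_{L^{p/r'}(\mathbb{R}^{n})}^{1/r'} \;\lesssim\; \bigl\||f|^{r'}\bigr\|_{L^{p/r'}(\mathbb{R}^{n})}^{1/r'} \;=\; \|f\|_{L^{p}(\mathbb{R}^{n})}
\]
for every $p>r'$, and in particular for all $p\geq 2$. This is precisely the $L^{p}$-boundedness of $f^{*}$ demanded by the NWO definition, so the plan is complete once the H\"older step and the maximal-function step are combined. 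The only subtlety worth flagging is that the argument does not extend down to $p\leq r'$; for the applications in the paper (singular values coming from $L^{2}$ pairings) the range $p\geq 2$ is what is used, and the hypothesis $r>2$ is exactly what makes the $L^{2}$ endpoint available.
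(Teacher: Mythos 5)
Your argument is correct and is exactly the standard Rochberg--Semmes proof of this fact (the paper itself only cites \cite{RS1989} without reproducing it): H\"older with exponents $(r,r')$ plus the normalization $\|e_Q\|_r\lesssim|Q|^{1/r-1/2}$ gives $f^*\lesssim \bigl(M(|f|^{r'})\bigr)^{1/r'}$ pointwise, and $r>2$ forces $r'<2$ so the Hardy--Littlewood bound applies at the needed exponent $p=2$ (indeed all $p>r'$). No gaps; the bookkeeping of the powers of $|Q|$ is handled correctly.
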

If some operator $T$ belongs to $S^{p,q}(\mathcal{G})$, in \cite{RS1989}
Rochberg and Semmes developed a substitute for the Schmidt decomposition of the operator $T$ that will be representations of the form
\begin{align}
T=\sum_{Q\in \mathscr{D}}\lambda_{Q}\langle\cdot,e_{Q}\rangle f_{Q}
\end{align}
with $\{e_{Q}\}_{Q\in \mathscr{D}}$ and $\{f_{Q}\}_{Q\in \mathscr{D}}$ are {\rm NWO} sequences and $\{\lambda_{Q}\}_{Q\in \mathscr{D}}$ is a sequence of scalars. It is easy to see that
\begin{align}\label{eq-NWO1}
\|T\|_{S^{p,q}(\mathcal{G})}\lesssim \|\lambda_Q\|_{\ell^{p,q}}, \quad 0<p<\infty, 0<q<\infty.
\end{align}
When $1<p=q<\infty$, Rochberg and Semmes also obtained
\begin{lem}[\cite{RS1989}]\label{eq-NWO}
{ For any bounded compact operator $T$ on $L^2(\mathbb{R}^{n})$ and $\{e_{Q}\}_{Q\in \mathscr{D}}$ and $\{f_{Q}\}_{Q\in \mathscr{D}}$ are {\rm NWO} sequences, then for $1<p<\infty$, }
$$\left[\sum_{Q\in \mathscr{D}}\left|\langle T e_{Q},f_{Q}\rangle\right|^{p}\right]^{\frac{1}{p}}\lesssim \|T\|_{S^{p}(\mathcal{G})}.$$
\end{lem}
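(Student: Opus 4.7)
The plan is to reduce the statement to Theorem \ref{Athm2} via the Clifford-algebra representation of the quantised derivative. Let $\gamma_1,\dots,\gamma_n$ be the self-adjoint $N\times N$ Dirac (gamma) matrices, with $N=2^{\lfloor n/2\rfloor}$, satisfying the Clifford relations $\gamma_j\gamma_k+\gamma_k\gamma_j=2\delta_{jk}I_N$. A direct Fourier-side computation identifies $\operatorname{sgn}(\mathcal{D})$ with $\sum_{j=1}^{n}\gamma_j\otimes R_j$ (up to a fixed sign), so that
$$
\bar{d}f \;=\; i\bigl[\operatorname{sgn}(\mathcal{D}),\,1\otimes M_f\bigr] \;=\; i\sum_{j=1}^{n}\gamma_j\otimes [R_j,M_f].
$$
Both directions of the equivalence are then obtained by relating the norm of the left side to the norms of the individual pieces $[R_j,M_f]$ and invoking Theorem \ref{Athm2}.

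For the upper bound I would combine the quasi-triangle inequality for $\|\cdot\|_{S^{n,\infty}}$ (equivalent to an honest norm since $n\geq 2$) with the elementary fact that tensoring with a bounded $N\times N$ matrix preserves the weak Schatten class up to a constant depending only on $n$:
$$
\|\bar{d}f\|_{S^{n,\infty}(\mathbb{C}^N\otimes L^2(w))}\lesssim \sum_{j=1}^{n}\|\gamma_j\otimes[R_j,M_f]\|_{S^{n,\infty}}\lesssim\sum_{j=1}^{n}\|[R_j,M_f]\|_{S^{n,\infty}(L^2(w))}\lesssim \|f\|_{\dot{W}^{1,n}(\mathbb{R}^n)},
$$
where the last inequality is exactly Theorem \ref{Athm2}.

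For the lower bound I would isolate each $[R_j,M_f]$ from $\bar{d}f$ using the Clifford relations. Since $\gamma_j\otimes I$ is unitary on $\mathbb{C}^N\otimes L^2(w)$, it preserves $\|\cdot\|_{S^{n,\infty}}$, and
$(\gamma_j\otimes I)\bar{d}f=i\sum_k \gamma_j\gamma_k\otimes [R_k,M_f]$.
Applying the partial trace $\operatorname{tr}_{\mathbb{C}^N}$ over the Clifford factor, together with $\operatorname{tr}(\gamma_j\gamma_k)=N\delta_{jk}$, yields
$$
\operatorname{tr}_{\mathbb{C}^N}\bigl((\gamma_j\otimes I)\bar{d}f\bigr)=iN\,[R_j,M_f].
$$
The associated conditional expectation $\tfrac{1}{N}\operatorname{tr}_{\mathbb{C}^N}\otimes\operatorname{id}$ is trace-preserving and normal, hence contractive on every symmetric operator ideal and in particular on $S^{n,\infty}$. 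This yields
$\|[R_j,M_f]\|_{S^{n,\infty}(L^2(w))}\lesssim \|\bar{d}f\|_{S^{n,\infty}(\mathbb{C}^N\otimes L^2(w))}$,
and a second invocation of Theorem \ref{Athm2} supplies $\|f\|_{\dot{W}^{1,n}}\lesssim\|\bar{d}f\|_{S^{n,\infty}}$.

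The only delicate point is verifying the ideal-theoretic estimates above---boundedness of tensoring with a fixed matrix and contractivity of the partial trace/conditional expectation---at the Lorentz endpoint $q=\infty$. Both are classical in the theory of symmetric operator ideals on separable Hilbert spaces: the first follows from the singular-value inequality $s_{Nk+\ell}(A\otimes T)\leq \|A\|_{\mathrm{op}}\,s_k(T)$ for $0\leq\ell<N$, and the second from the general fact that trace-preserving normal conditional expectations onto a corner are contractive on every symmetric norm. With $N$ depending only on $n$, these are absorbed into the implicit constants, and the theorem then follows at once from Theorem \ref{Athm2}.
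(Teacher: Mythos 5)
Your proposal does not address the statement at hand. The lemma to be proved is the Rochberg--Semmes estimate for nearly weakly orthogonal sequences: for a compact operator $T$ on $L^{2}(\mathbb{R}^{n})$ and NWO families $\{e_{Q}\}_{Q\in\mathscr{D}}$, $\{f_{Q}\}_{Q\in\mathscr{D}}$, one has $\bigl[\sum_{Q\in\mathscr{D}}|\langle Te_{Q},f_{Q}\rangle|^{p}\bigr]^{1/p}\lesssim\|T\|_{S^{p}}$ for $1<p<\infty$. What you have written instead is a sketch of the quantised-derivative application (the equivalence $\bar{d}f\in S^{n,\infty}(\mathbb{C}^{N}\otimes L^{2}(w))\Longleftrightarrow f\in\dot{W}^{1,n}(\mathbb{R}^{n})$), which in the paper is Theorem \ref{Athm3} in Section \ref{application} and is deduced from Theorem \ref{Athm2} exactly along the Clifford-algebra lines you describe. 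Nothing in your argument engages with the objects of the lemma: NWO sequences, their defining maximal-function property, the singular values $s_{k}(T)$, or the quantity $\sum_{Q}|\langle Te_{Q},f_{Q}\rangle|^{p}$. Reducing to Theorem \ref{Athm2} is also logically backwards here, since the lemma is an ingredient used in the proofs of Propositions \ref{proS1} and \ref{proS3} (hence upstream of Theorems \ref{Athm1} and \ref{Athm2}), so invoking those theorems would be circular.

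For the record, the paper itself does not prove this lemma; it is imported from Rochberg and Semmes \cite{RS1989}. A genuine proof runs along the following lines: expand $T$ in its Schmidt decomposition $T=\sum_{k}s_{k}(T)\,\langle\cdot,u_{k}\rangle\,v_{k}$ with orthonormal $\{u_{k}\}$, $\{v_{k}\}$, so that $\langle Te_{Q},f_{Q}\rangle=\sum_{k}s_{k}(T)\langle e_{Q},u_{k}\rangle\langle v_{k},f_{Q}\rangle$; then use the defining property of NWO sequences --- that $g\mapsto g^{*}(x)=\sup_{Q}|Q|^{-1/2}|\langle g,e_{Q}\rangle|\chi_{Q}(x)$ is bounded on $L^{2}$ (and on $L^{p}$) --- to control the coefficient sequences $\{\langle u_{k},e_{Q}\rangle\}_{Q}$ and $\{\langle v_{k},f_{Q}\rangle\}_{Q}$, and conclude by a duality and interpolation argument between the endpoint cases (trace class and operator norm) to obtain the $\ell^{p}$ bound by $\|\{s_{k}(T)\}\|_{\ell^{p}}=\|T\|_{S^{p}}$. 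If you want to supply a proof here, that is the argument to reconstruct; the Clifford/partial-trace machinery in your proposal has no bearing on it.
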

Lacey and the last two authors in  \cite{LLW2022} provided a relationship between Schatten norms on weighted and unweighted $L^{2}(\mathbb{R})$ (which also generalizes to $\mathbb{R}^{n}$).
\begin{lem}[\cite{LLW2022}]\label{Sweight}
Suppose $1\leq p<\infty$, and $w\in A_2$. Then $T$ belongs to $S^{p}(L^{2}(w))$ if and only if $w^{\frac{1}{2}}Tw^{-\frac{1}{2}}$ belong to $S^{p}(L^{2}(\mathbb{R}^n))$.  Moreover,
\begin{align*}
\|T\|_{S^{p}(L^{2}(w))}\approx \|w^{\frac{1}{2}}Tw^{-\frac{1}{2}}\|_{S^{p}(L^{2}(\mathbb{R}^{n}))}.
\end{align*}
\end{lem}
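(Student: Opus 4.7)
The plan is to realize the identification $L^2(w)\cong L^2(\mathbb R^n)$ concretely as a unitary equivalence, and then invoke the fact that Schatten $p$-norms are invariants of the underlying Hilbert space under unitary conjugation.

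First I would introduce the multiplication map $U_w\colon L^2(w)\to L^2(\mathbb R^n)$ defined by $U_w f=w^{1/2}f$. Since $w\in A_2$ (in particular $w>0$ a.e.\ and $w,w^{-1}$ are locally integrable), one checks directly that
\[
\|U_w f\|_{L^2(\mathbb R^n)}^2=\int_{\mathbb R^n}|w^{1/2}f|^2\,dx=\int_{\mathbb R^n}|f|^2 w\,dx=\|f\|_{L^2(w)}^2,
\]
so $U_w$ is isometric. Its inverse $U_w^{-1}g=w^{-1/2}g$ satisfies $\|U_w^{-1}g\|_{L^2(w)}=\|g\|_{L^2(\mathbb R^n)}$, so $U_w$ is a unitary isomorphism of Hilbert spaces. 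A short computation then shows
\[
U_w\,T\,U_w^{-1}g = w^{1/2}\,T(w^{-1/2}g),
\]
which is precisely the operator $w^{1/2}Tw^{-1/2}$ viewed as acting on $L^2(\mathbb R^n)$.

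Next I would invoke the standard fact that singular values, and hence all Schatten--Lorentz norms, are invariant under unitary conjugation: if $U\colon \HHH_1\to \HHH_2$ is unitary and $T\in\BBB(\HHH_1)$, then $s_k(UTU^{-1})=s_k(T)$ for every $k$, because $(UTU^{-1})^*(UTU^{-1})=U T^*T U^{-1}$ is unitarily equivalent to $T^*T$ and thus has the same spectrum (with multiplicities). Specializing to $U=U_w$ and using $1\le p<\infty$, this yields
\[
\|T\|_{S^p(L^2(w))}=\|U_w T U_w^{-1}\|_{S^p(L^2(\mathbb R^n))}=\|w^{1/2}Tw^{-1/2}\|_{S^p(L^2(\mathbb R^n))},
\]
and in particular one side is finite iff the other is.

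There is essentially no analytic obstacle here; the statement is a soft consequence of the unitary equivalence between $L^2(w)$ and $L^2(\mathbb R^n)$ induced by multiplication by $w^{1/2}$. The only bookkeeping to be careful about is that the conjugated operator $w^{1/2}Tw^{-1/2}$ is a priori defined on the dense subspace $\{g\in L^2(\mathbb R^n):w^{-1/2}g\in L^2(w)\}$, which by the isometry computation above is actually all of $L^2(\mathbb R^n)$; and that the $A_2$ hypothesis ensures $w^{\pm 1/2}$ are finite a.e., so the pointwise multiplication operators in the definition of $U_w$ are well-defined. With these trivial checks in hand, the lemma follows immediately.
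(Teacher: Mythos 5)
Your proposal is correct and is essentially the same argument the paper relies on: the paper (following \cite{LLW2022}) treats $M_{w^{1/2}}$ as a unitary identification of $L^{2}(w)$ with $L^{2}(\mathbb{R}^n)$ and uses invariance of singular values under unitary conjugation, exactly as you do. Your observation that the equivalence is in fact an equality of Schatten norms (and works for all $0<p<\infty$) is a correct refinement, not a deviation.
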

{  Using the idea of Lacey and the last two authors \cite{LLW2022}, i.e., $$M_{w^{\frac{1}{2}}}f(x)=w^{\frac{1}{2}}f(x)$$ is unitary operators and applying the definition of $S^{p,\infty}(L^{2}(w))$, we can also obtian the following result.
\begin{lem}\label{WSweight}
	Suppose $1\leq p<\infty$, and $w\in A_2$. Then $T$ belongs to $S^{p,\infty}(L^{2}(w))$ if and only if $w^{\frac{1}{2}}Tw^{-\frac{1}{2}}$ belong to $S^{p,\infty}(L^{2}(\mathbb{R}^n))$.  Moreover,
	\begin{align*}
		\|T\|_{S^{p,\infty}(L^{2}(w))}\approx \|w^{\frac{1}{2}}Tw^{-\frac{1}{2}}\|_{S^{p,\infty}(L^{2}(\mathbb{R}^{n}))}.
	\end{align*}
\end{lem}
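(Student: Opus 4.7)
The plan is to imitate the argument already referenced in Lemma~\ref{Sweight}, extending it from $S^{p}$ to $S^{p,\infty}$ by exploiting that the weak Schatten norm depends only on the singular-value sequence, which is invariant under unitary conjugation. Concretely, I would first observe that the multiplication operator
\[
M_{w^{1/2}}\colon L^{2}(w)\longrightarrow L^{2}(\mathbb{R}^{n}),\qquad M_{w^{1/2}}f(x)=w(x)^{1/2}f(x),
\]
is a surjective isometry: for every $f\in L^{2}(w)$,
\[
\|M_{w^{1/2}}f\|_{L^{2}(\mathbb{R}^{n})}^{2}=\int_{\mathbb{R}^{n}}|f(x)|^{2}w(x)\,dx=\|f\|_{L^{2}(w)}^{2},
\]
with inverse $M_{w^{-1/2}}\colon L^{2}(\mathbb{R}^{n})\to L^{2}(w)$. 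Since $w\in A_{2}$ one has $w, w^{-1}\in L^{1}_{\mathrm{loc}}$, so both multiplication operators are well defined as unitaries between the two Hilbert spaces.

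Next I would write $w^{1/2}Tw^{-1/2}=M_{w^{1/2}}\circ T\circ M_{w^{-1/2}}$ and note that this identity says $w^{1/2}Tw^{-1/2}$ is obtained from $T$ by conjugation with a unitary equivalence between the two Hilbert spaces. A standard fact from operator theory (immediate from the definitions of $|T|=(T^{*}T)^{1/2}$ and the min–max formula for singular values) is that singular values are preserved under such unitary equivalences:
\[
s_{k}\bigl(w^{1/2}Tw^{-1/2}\bigr)=s_{k}(T)\qquad\text{for every } k\in\mathbb{Z}^{+}.
\]
In particular, $T$ is compact on $L^{2}(w)$ if and only if $w^{1/2}Tw^{-1/2}$ is compact on $L^{2}(\mathbb{R}^{n})$, so the notion of singular values is the same on both sides.

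Finally, invoking the definition of the Schatten–Lorentz norm given in the introduction,
\[
\|T\|_{S^{p,\infty}(L^{2}(w))}=\sup_{k\in\mathbb{Z}^{+}} s_{k}(T)(1+k)^{1/p}=\sup_{k\in\mathbb{Z}^{+}} s_{k}\bigl(w^{1/2}Tw^{-1/2}\bigr)(1+k)^{1/p}=\bigl\|w^{1/2}Tw^{-1/2}\bigr\|_{S^{p,\infty}(L^{2}(\mathbb{R}^{n}))},
\]
which yields both directions of the equivalence and even gives the two norms exactly equal (the statement allows $\approx$ because only unitary equivalence of the underlying Hilbert spaces is used). I do not foresee a genuine obstacle; the only point requiring care is to make sure that the formula $w^{1/2}Tw^{-1/2}$ is interpreted as conjugation between the two different Hilbert spaces $L^{2}(w)$ and $L^{2}(\mathbb{R}^{n})$, so that $M_{w^{1/2}}$ and $M_{w^{-1/2}}$ really are unitaries and the singular-value invariance applies directly.
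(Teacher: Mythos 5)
Your proposal is correct and follows essentially the same route as the paper, which likewise observes that $M_{w^{1/2}}\colon L^{2}(w)\to L^{2}(\mathbb{R}^{n})$ is unitary and then applies the definition of $S^{p,\infty}$ via singular values, so that conjugation $w^{1/2}Tw^{-1/2}=M_{w^{1/2}}TM_{w^{-1/2}}$ preserves the sequence $s_{k}(T)$. Nothing is missing; your write-up is in fact a more explicit version of the paper's sketch.
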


}

\subsection{Description of Besov space.}
\begin{defi}
Suppose $0<p,q<\infty$ and $0<\alpha<1$. Let $b\in L^{1}_{loc}(\mathbb{R}^n)$. Then $b$ belongs to the Besov space $B_{\alpha}^{p,q}(\mathbb{R}^n)$ if
\begin{align*}
\bigg(\int_{\mathbb{R}^n}\bigg(\int_{\mathbb{R}^n}\frac{|b(x)-b(y)|^{p}}{|x-y|^{\frac{p}{q}n+p\alpha}}dy\bigg)^{\frac{q}{p}}dx\bigg)^{\frac{1}{q}}<\infty.
\end{align*}
In particular, note that if $\alpha=\frac{n}{p}$ and $p=q$ then:
$$\|b\|_{B_{n/p}^{p,p}(\mathbb{R}^n)}:=\bigg(\int_{\mathbb{R}^n}\int_{\mathbb{R}^n}\frac{|b(x)-b(y)|^{p}}{|x-y|^{2n}}dydx\bigg)^{\frac{1}{p}}.$$
\end{defi}

Useful in the proof below will be dyadic norms.  We give the norm of the dyadic Besov space next.

\begin{defi}
Suppose $0<p<\infty$. Let $b\in L^{1}_{loc}(\mathbb{R}^n)$ and $\mathscr{D}$ be an arbitrary dyadic system in $\mathbb{R}^n$. Then $b$ belongs to the dyadic Besov space $B_{d}^{p}(\mathbb{R}^n)$ if
\begin{align*}
\|b\|_{B_{d}^{p}(\mathbb{R}^n)}&:=\bigg(\sum_{{\substack{Q\in\mathscr{D}\\\epsilon\not\equiv1}}}\left(|{ \langle b, h^{\epsilon}_{Q}\rangle}||Q|^{-\frac{1}{2}} \right)^p\bigg)^{\frac{1}{p}}<\infty.
\end{align*}
\end{defi}

Suppose $w\in A_2$. By the definition of $A_2$ weights, we obtain that
\begin{align*}
\frac{w(Q)w^{-1}(Q)}{|Q||Q|}\approx1.
\end{align*}
Then
\begin{align*}
\|b\|^{p}_{B_{d}^{p}(\mathbb{R}^n)}&=\sum_{{\substack{Q\in\mathscr{D}\\\epsilon\not\equiv1}}}\left(|{ \langle b, h^{\epsilon}_{Q}\rangle}||Q|^{-\frac{1}{2}} \right)^p\\
&\approx\sum_{{\substack{Q\in\mathscr{D}\\\epsilon\not\equiv1}}}\left(\frac{w(Q)^{\frac{1}{2}}(w^{-1}(Q))^{\frac{1}{2}}|{ \langle b, h^{\epsilon}_{Q}\rangle}|}{|Q|^{\frac{3}{2}}} \right)^p\\
&\approx\sum_{{\substack{Q\in\mathscr{D}\\\epsilon\not\equiv1}}}\left(\frac{|{ \langle b, h^{\epsilon}_{Q}\rangle}||Q|^{\frac{1}{2}}}{w(Q)^{\frac{1}{2}}(w^{-1}(Q))^{\frac{1}{2}}} \right)^p.
\end{align*}

Key to the analysis will be the fact that a suitable family of dyadic norms is equivalent to the norm in the continuous setting, the content of the next lemma.

\begin{lem}\label{lemA1}
Suppose $w\in A_{2}$, $n<p<\infty$. There are dyadic cubes $\mathscr{D}^{0}$ and dyadic cubes { $\mathscr{D}^{\omega}, (\omega\in\{0,\frac{1}{3},\frac{2}{3}\}^n)$} such that
$\bigcap_{\omega\not\equiv0} B_{d}^{p}(\mathbb{R}^{n},\mathscr{D}^{\omega})\bigcap B_{d}^{p}(\mathbb{R}^{n},\mathscr{D}^{0})= B_{n/p}^{p,p}(\mathbb{R}^n)$, that is,
$$\|b\|_{B_{d}^{p}(\mathbb{R}^{n},\mathscr{D}^{0})}
+\sum_{\omega\not\equiv0}\|b\|_{B_{d}^{p}(\mathbb{R}^{n},\mathscr{D}^{\omega})}\approx \|b\|_{B_{n/p}^{p,p}(\mathbb{R}^n)}.$$
\end{lem}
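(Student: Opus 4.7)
I would prove the claimed equivalence in two parts: the easier direction $\sum_\omega\|b\|_{B^p_d(\mathbb R^n,\mathscr D^\omega)}\lesssim \|b\|_{B^{p,p}_{n/p}(\mathbb R^n)}$, which holds for each $\omega$ separately, and the harder direction $\|b\|_{B^{p,p}_{n/p}(\mathbb R^n)}\lesssim \sum_\omega\|b\|_{B^p_d(\mathbb R^n,\mathscr D^\omega)}$, which genuinely requires the full family of shifted dyadic systems via the one-third trick of \cite{HK2012}.

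For the easy direction, fix any $\omega$. For a cancellative Haar function $h^\epsilon_Q$ ($\epsilon\not\equiv 1$), the mean-zero property and $\|h^\epsilon_Q\|_\infty\le |Q|^{-1/2}$ give
\[
|\langle b,h^\epsilon_Q\rangle|\le |Q|^{-1/2}\int_Q|b(x)-\langle b\rangle_Q|\,dx\le |Q|^{-3/2}\iint_{Q\times Q}|b(x)-b(y)|\,dy\,dx.
\]
Raising to the $p$-th power (Jensen, $p\ge 1$), summing in $Q$ and $\epsilon$, and swapping the sum with the $(x,y)$-integral produces
\[
\|b\|^p_{B^p_d(\mathbb R^n,\mathscr D^\omega)}\lesssim \iint|b(x)-b(y)|^p\sum_{\substack{Q\in\mathscr D^\omega\\ x,y\in Q}}\frac{1}{|Q|^2}\,dx\,dy.
\]
The inner sum runs over the ancestors of the minimal cube $\hat Q^\omega(x,y)\in\mathscr D^\omega$ containing both points; it is a geometric series dominated by $|\hat Q^\omega(x,y)|^{-2}\lesssim |x-y|^{-2n}$ since $\ell(\hat Q^\omega(x,y))\gtrsim |x-y|$.

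For the harder direction, the weakness of a single system is that $\ell(\hat Q^\omega(x,y))$ may be much larger than $|x-y|$; the one-third trick supplies, for each pair $(x,y)$, a choice $\omega^*=\omega^*(x,y)\in\{0,\tfrac{1}{3},\tfrac{2}{3}\}^n$ and a cube $Q^*(x,y)\in\mathscr D^{\omega^*}$ with $x,y\in Q^*$ and $\ell(Q^*)\approx |x-y|$. Writing $2^{-k^*}=\ell(Q^*)$ and letting $Q^\omega_k(x)$ denote the level-$k$ member of $\mathscr D^\omega$ containing $x$, Lemma \ref{haar}(7) yields the telescoping identity
\[
b(x)-\langle b\rangle_{Q^*}=\sum_{k\ge k^*}\sum_{\epsilon\not\equiv 1}\langle b,h^\epsilon_{Q^{\omega^*}_k(x)}\rangle\,h^\epsilon_{Q^{\omega^*}_k(x)}(x),
\]
whence $|b(x)-\langle b\rangle_{Q^*}|\le \sum_{k\ge k^*}A_{Q^{\omega^*}_k(x)}$ with $A_Q:=\sum_{\epsilon\not\equiv 1}|\langle b,h^\epsilon_Q\rangle||Q|^{-1/2}$ (and symmetrically in $y$). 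H\"older's inequality in $k$ with weight $2^{-\alpha k}$ for a fixed $\alpha\in(0,n/p)$ converts this to
\[
|b(x)-\langle b\rangle_{Q^*}|^p\lesssim |x-y|^{\alpha p}\sum_{k\ge k^*}2^{\alpha k p}A_{Q^{\omega^*}_k(x)}^p.
\]

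Substituting into $\iint|b(x)-b(y)|^p|x-y|^{-2n}\,dx\,dy$, partitioning by $\omega^*(x,y)$, exchanging the $k$-sum with the $(x,y)$-integral, and grouping by $Q\in\mathscr D^\omega_k$ reduces matters to the estimate
\[
\int_Q dx\int dy\,|x-y|^{\alpha p-2n}\mathbf 1_{\omega^*(x,y)=\omega,\,k^*_\omega(x,y)\le k}\lesssim 2^{-k\alpha p},
\]
which follows because $\omega^*(x,y)=\omega$ forces $|x-y|\approx 2^{-k^*_\omega(x,y)}\ge c\,2^{-k}$, so the $y$-integral becomes $\int_{|x-y|\ge c2^{-k}}|x-y|^{\alpha p-2n}\,dy\approx 2^{k(n-\alpha p)}$ (convergent at infinity exactly because $\alpha<n/p$), and $|Q|=2^{-kn}$. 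The weight $2^{\alpha kp}$ cancels $2^{-k\alpha p}$, and summation in $Q$, $k$, and $\omega$ yields $\sum_\omega\|b\|^p_{B^p_d(\mathbb R^n,\mathscr D^\omega)}$. The main obstacle is the bookkeeping of the geometric constraints in the last display; the hypothesis $n<p<\infty$ enters because $(0,n/p)$ must contain a valid H\"older exponent $\alpha<1$, corresponding to the fact that Haar wavelets (of regularity $0$) characterize $B^{p,p}_{n/p}$ only in the regime $n/p<1$.
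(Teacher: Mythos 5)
Your proof is correct, and it takes a genuinely different route from the paper's, most visibly in the harder direction. For the easy direction you bound $|\langle b,h^\epsilon_Q\rangle|$ by the mean oscillation over $Q$ itself and then sum $\sum_{Q\ni x,y}|Q|^{-2}\lesssim |x-y|^{-2n}$ as a geometric series over nested ancestors; the paper instead introduces a translated cube $\hat Q=Q+\{2\ell(Q)\}^n$ and, via H\"older, bounds $|\langle b,h^\epsilon_Q\rangle||Q|^{-1/2}$ directly by the local piece $\big(\int_{\hat Q}\int_Q |b(x)-b(y)|^p/|x-y|^{2n}\big)^{1/p}$, then sums these disjoint (up to bounded overlap) pieces. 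These are cosmetically different implementations of the same idea. For the hard direction the difference is substantive: the paper decomposes the double integral by annuli $2^{-k}<|x-y|\le 2^{-k+1}$, covers $5Q$ by a single shifted dyadic cube $J^\omega$ via the one-third trick, passes to $\frac{1}{|J^\omega|}\int_{J^\omega}|b-\langle b\rangle_{J^\omega}|^p$, Haar-expands and applies one Minkowski inequality in the scale index; you instead use the one-third trick \emph{pointwise}, choosing $\omega^*(x,y)$ and $Q^*(x,y)$ with $\ell(Q^*)\approx|x-y|$, telescope $b(x)-\langle b\rangle_{Q^*}$ via Lemma \ref{haar}(7), apply a weighted H\"older inequality in $k$ with weight $2^{-\alpha k}$ ($0<\alpha<n/p$), and then carefully unwind the geometric constraints $\omega^*=\omega$, $k^*\le k$ to produce the cancellation $2^{\alpha kp}\cdot 2^{-k\alpha p}$. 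Both are valid; yours is closer in spirit to classical wavelet-coefficient characterizations, while the paper's is shorter because the mean-oscillation step absorbs the scale bookkeeping into a single Minkowski application. One small inaccuracy in your concluding remark: the argument as written does not actually use $\alpha<1$, only $\alpha\in(0,n/p)$ (a nonempty interval for every $p$) together with $p\ge 1$ for H\"older/Jensen; the hypothesis $n<p$ corresponds to the regime $n/p<1$ in which the Haar system characterizes the Besov space, but this is not literally where a step of your argument would break.
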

\begin{proof}
On the one hand, we first prove the dyadic Besov norm is dominated by the continuous Besov norm, that is, $$\|b\|_{B_{d}^{p}(\mathbb{R}^{n})}
\lesssim \|b\|_{B_{n/p}^{p,p}(\mathbb{R}^n)}.$$
Choosing a dyadic cube $Q\in \mathscr{D}$. Let $\hat{Q}=Q+\{2\ell(Q)\}^n$. By applying Lemma \ref{haar}, H\"{o}lder's inequality and freely using Lemma \ref{double},
\begin{align*}
|{ \langle b, h^{\epsilon}_{Q}\rangle}||Q|^{-\frac{1}{2}}&\lesssim \bigg|\int_{Q}b(x)h^{\epsilon}_{Q}(x)dx\bigg|\frac{|\hat{Q}|}{|Q|^{\frac{3}{2}}}\\
&\lesssim \int_{\hat{Q}}\int_{Q}|b(x)-b(y)||h^{\epsilon}_{Q}(x)|dxdy|Q|^{-\frac{3}{2}}
\\&\lesssim \bigg(\int_{\hat{Q}}\int_{Q}\frac{|b(x)-b(y)|^{p}}{|x-y|^{2n}}dxdy\bigg)^{\frac{1}{p}}
\bigg(\int_{\hat{Q}}\int_{Q}|h^{\epsilon}_{Q}(x)|^{p'}dxdy\bigg)^{\frac{1}{p'}}{ |Q|^{-\frac{2}{p'}+\frac{1}{2}}}\\
&\lesssim \bigg(\int_{\hat{Q}}\int_{Q}\frac{|b(x)-b(y)|^{p}}{|x-y|^{2n}}dxdy\bigg)^{\frac{1}{p}}.
\end{align*}
Hence, we can obtain
 \begin{align*}
\|b\|^{p}_{B_{d}^{p}(\mathbb{R}^n)}&=\sum_{{\substack{Q\in\mathscr{D}\\\epsilon\not\equiv1}}}\left(|{ \langle b, h^{\epsilon}_{Q}\rangle}||Q|^{-\frac{1}{2}} \right)^p \lesssim \sum_{{Q\in\mathscr{D}}}\int_{\hat{Q}}\int_{Q}\frac{|b(x)-b(y)|^{p}}{|x-y|^{2n}}dxdy 
\\&{ \lesssim \int_{\mathbb R^n}\int_{\mathbb R^n}\frac{|b(x)-b(y)|^{p}}{|x-y|^{2n}}dxdy 
}=  \|b\|^{p}_{B_{n/p}^{p,p}(\mathbb{R}^n)}.
\end{align*}

On the other hand, we need to consider that
\begin{align}\label{eq-S2.4}
\|b\|_{B_{n/p}^{p,p}(\mathbb{R}^n)}\lesssim \|b\|_{B_{d}^{p}(\mathbb{R}^{n},\mathscr{D}^{0})}
+\sum_{\omega\not\equiv0}\|b\|_{B_{d}^{p}(\mathbb{R}^{n},
\mathscr{D}^{\omega})}.
\end{align}
Observe that,
\begin{align*}
\|b\|^{p}_{B_{n/p}^{p,p}(\mathbb{R}^n)}&=\int_{\mathbb{R}^n}\int_{\mathbb{R}^n}
\frac{|b(x)-b(y)|^{p}}{|x-y|^{2n}}dydx\\
&\leq \sum_{k\in\mathbb{ Z}}\sum_{Q\in \mathscr{D}_{k}}\int_{Q}\int_{\{y\in \mathbb{R}^{n}:2^{-k}<|x-y|\leq 2^{-k+1}\}}
\frac{|b(x)-b(y)|^{p}}{|x-y|^{2n}}dydx\\
&\leq \sum_{k\in\mathbb{ Z}}\sum_{Q\in \mathscr{D}_{k}}\int_{Q}\int_{\{y\in \mathbb{R}^{n}:0<|x-y|\leq 3\cdot2^{-k}\}}
\frac{|b(x)-b(y)|^{p}}{|x-y|^{2n}}dydx.
\end{align*}
It is clear that ${\{y\in \mathbb{R}^{n}:0<|x-y|\leq 3\cdot2^{-k}\}}\subset 5Q$. Then there is $J^{\omega}\in\mathscr{D}^{\omega}$ such that $$5Q\subset \bigcup_{ \omega\in\{0,\frac{1}{3},\frac{2}{3}\}^n}J^{\omega}.$$(See \cite{HK2012}). 
Then, we have
\begin{align*}
&\quad\sum_{k\in\mathbb{ Z}}\sum_{Q\in \mathscr{D}_{k}}\int_{Q}\int_{\{y\in \mathbb{R}^{n}:0<|x-y|\leq 3\cdot2^{-k}\}}
\frac{|b(x)-b(y)|^{p}}{|x-y|^{2n}}dydx\\
&\le\sum_{ \omega\in\{0,\frac{1}{3},\frac{2}{3}\}^n}\sum_{J^{\omega}\in\mathscr{D}^{\omega}}
\frac{1}{|J^{\omega}|^2}\int_{J^{\omega}}\int_{J^{\omega}}
|b(x)-b(y)|^{p}dydx\\
&\le\sum_{ \omega\in\{0,\frac{1}{3},\frac{2}{3}\}^n}\sum_{J^{\omega}\in\mathscr{D}^{\omega}}
\frac{1}{|J^{\omega}|^2}\int_{J^{\omega}}\int_{J^{\omega}}
|b(x)-b_{J^{\omega}}|^{p}dxdy
\\&\qquad+\sum_{ \omega\in\{0,\frac{1}{3},\frac{2}{3}\}^n}\sum_{J^{\omega}\in\mathscr{D}^{\omega}}
\frac{1}{|J^{\omega}|^2}\int_{J^{\omega}}\int_{J^{\omega}}
|b(y)-b_{J^{\omega}}|^{p}dydx=:2II.\\
\end{align*}
Using
 $$(b(x)-b_{J^{\omega}})\chi_{J^{\omega}}(x)=\sum_{\substack{P^{\omega}\in \mathscr{D}^{\omega},P^{\omega}\subset J^{\omega}\\\epsilon\not\equiv1}}\langle b, h^{\epsilon}_{P^{\omega}}\rangle h^{\epsilon}_{P^{\omega}}(x)$$
{ and Minkowski's inequality, we have }
\begin{align*}
(II)^{\frac{1}{p}}&\lesssim \bigg(\sum_{ \omega\in\{0,\frac{1}{3},\frac{2}{3}\}^n}\sum_{J^{\omega}\in\mathscr{D}^{\omega}}
\frac{1}{|J^{\omega}|^2}\int_{J^{\omega}}\int_{J^{\omega}}
{ \bigg|}\sum_{\substack{P^{\omega}\in \mathscr{D}^{\omega},P^{\omega}\subset J^{\omega}\\\epsilon\not\equiv1}}\langle b, h^{\epsilon}_{P^{\omega}}\rangle h^{\epsilon}_{P^{\omega}}(x)\bigg|^{p}dxdy\bigg)^{\frac1p}\\
&\lesssim \sum_{ \omega\in\{0,\frac{1}{3},\frac{2}{3}\}^n}\bigg(
\sum_{\substack{P^{\omega}\in \mathscr{D}^{\omega}\\\epsilon\not\equiv1}}\bigg(|\langle b, h^{\epsilon}_{P^{\omega}}\rangle||P^{\omega}|^{-\frac{1}{2}}\bigg)^{p}\bigg)^{\frac1p}.
\end{align*}
Finally we get inequality \eqref{eq-S2.4} and complete the proof of Lemma \ref{lemA1}.
\end{proof}

\section{Proof of (1) in Theorem \ref{Athm1}}\label{Proof1}
From Lemma \ref{lemA1}, we know that the continuous Besov space is the intersection of $3^n$ dyadic Besov spaces with $n<p<\infty$. Thus, the proof of (1) in Theorem \ref{Athm1} can be completed by discussing the following two properties.

\begin{pro}\label{proS1}
For $1<n<p<\infty$, let $w\in A_2$, and $b\in {\rm VMO}(\mathbb{R}^n)$ with $$ \|[b,R_{j}]\|_{S^{p}(L^{2}(w))}<\infty.$$
Then we have
\begin{align*}
\|b\|_{B_{d}^{p}(\mathbb{R}^n)}\lesssim\|[b,R_{j}]\|_{S^{p}(L^{2}(w))}.
\end{align*}
\end{pro}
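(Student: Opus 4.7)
The strategy is a weighted adaptation of the Rochberg--Semmes nearly-weakly-orthogonal (NWO) scheme. By Lemma \ref{Sweight}, the problem is equivalent to lower-bounding $\|T\|_{S^p(L^2(\mathbb R^n))}$ for the unweighted conjugated operator $T:=M_{w^{1/2}}[b,R_j]M_{w^{-1/2}}$. Since $p>n>1$, Lemma \ref{eq-NWO} yields
\begin{equation*}
\sum_{Q\in\mathscr D}|\langle Te_Q,f_Q\rangle|^p\lesssim\|T\|_{S^p}^p
\end{equation*}
for any NWO pair $\{e_Q\},\{f_Q\}\subset L^2(\mathbb R^n)$. Hence it suffices, separately for each cancellative Haar index $\epsilon\not\equiv1$, to construct NWO pairs satisfying
\begin{equation*}
|\langle Te_Q,f_Q\rangle|\gtrsim|Q|^{-1/2}|\langle b,h_Q^\epsilon\rangle|,
\end{equation*}
since summing the $p$-th powers over $Q$ and then over the $2^n-1$ choices of $\epsilon$ reproduces $\|b\|_{B_d^p(\mathbb R^n)}^p$.

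For the construction, attach to each dyadic $Q$ a partner cube $Q^*$ of the same size shifted in the $+e_j$ direction, so that the Riesz kernel $K_j(x-y):=c_n(x_j-y_j)|x-y|^{-n-1}$ is uniformly positive and of order $\ell(Q)^{-n}$ on $Q^*\times Q$. Define
\begin{equation*}
e_Q^\epsilon:=\Bigl(\frac{|Q|}{w(Q)}\Bigr)^{1/2}w^{1/2}h_Q^\epsilon,\qquad f_Q^\epsilon:=\frac{1}{w^{-1}(Q^*)^{1/2}}w^{-1/2}\chi_{Q^*}.
\end{equation*}
The reverse H\"older inequality (Lemma \ref{reverse}) applied to both $w$ and $w^{-1}\in A_2$ supplies $r>2$ for which $\|e_Q^\epsilon\|_r\lesssim|Q|^{1/r-1/2}$ and $\|f_Q^\epsilon\|_r\lesssim|Q^*|^{1/r-1/2}$, so after indexing $f^\epsilon$ by its support cube via the bijection $Q\mapsto Q^*$ on each dyadic level, both collections are NWO. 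In the pairing the weights cancel:
\begin{equation*}
\langle Te_Q^\epsilon,f_Q^\epsilon\rangle=\frac{(|Q|/w(Q))^{1/2}}{w^{-1}(Q^*)^{1/2}}\cdot\langle[b,R_j]h_Q^\epsilon,\chi_{Q^*}\rangle,
\end{equation*}
and the $A_2$ identity $w(Q)w^{-1}(Q^*)\approx|Q|^2$ (via the doubling of $w,w^{-1}$ across the adjacent cubes $Q,Q^*$, Lemma \ref{double}) reduces the prefactor to $\approx|Q|^{-1/2}$. Since $Q\cap Q^*=\varnothing$, the commutator pairing becomes the non-singular double integral
\begin{equation*}
c_n\iint_{Q^*\times Q}K_j(x-y)(b(x)-b(y))h_Q^\epsilon(y)\,dy\,dx,
\end{equation*}
whose principal contribution, after invoking $\int h_Q^\epsilon=0$ and the gauge invariance $[b,R_j]=[b-\langle b\rangle_{Q^*},R_j]$, isolates as $-G(y_Q)\langle b,h_Q^\epsilon\rangle$ with $G(y):=\int_{Q^*}K_j(x-y)\,dx$ a nonzero geometric constant of order one.

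The principal obstacle is controlling the residual error terms in the kernel decomposition. Because $K_j$ is homogeneous of degree $-n$, its Taylor remainder on $Q^*\times Q$ is of the same order as $K_j$ itself, so the cancellation must be extracted by iterating the mean-zero property of $h_Q^\epsilon$ together with the splitting $b=\langle b\rangle_{Q^*}+(b-\langle b\rangle_{Q^*})$; the VMO hypothesis on $b$ then guarantees that the local oscillation $\langle|b-\langle b\rangle_{Q^*}|\rangle_{Q\cup Q^*}$ is honestly subordinate, allowing the errors to be absorbed into the main term through a scale-wise rearrangement argument. This error-absorption step is the heart of the proof. Once the pointwise lower bound is secured, Lemma \ref{eq-NWO} delivers
\begin{equation*}
\|b\|_{B_d^p(\mathbb R^n)}^p\lesssim\sum_{\epsilon\not\equiv1}\sum_{Q\in\mathscr D}|\langle Te_Q^\epsilon,f_Q^\epsilon\rangle|^p\lesssim\|T\|_{S^p}^p\approx\|[b,R_j]\|_{S^p(L^2(w))}^p,
\end{equation*}
completing the argument.
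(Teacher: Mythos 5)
The reduction via Lemma \ref{Sweight}, the use of Lemma \ref{eq-NWO}, and the reverse H\"older verification that the constructed sequences are NWO all mirror the paper, and that part of your proposal is fine. However, the heart of your argument --- the pointwise lower bound $|\langle Te_Q^\epsilon,f_Q^\epsilon\rangle|\gtrsim |Q|^{-1/2}|\langle b,h_Q^\epsilon\rangle|$ extracted from the Haar pairing --- is not established, and I do not think it can be in the form you describe. You correctly observe that the Taylor remainder $K_j(x-y)-K_j(x-y_Q)$ on $Q^*\times Q$ is of the \emph{same} order $\ell(Q)^{-n}$ as the kernel itself (the cubes are separated by only $\sim\ell(Q)$, so the mean-zero property of $h_Q^\epsilon$ buys no extra decay). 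Consequently the error
$$E_Q=\iint_{Q^*\times Q}\bigl(K_j(x-y)-K_j(x-y_Q)\bigr)\bigl(b(x)-b(y)\bigr)h_Q^\epsilon(y)\,dy\,dx$$
is controlled only by $\ell(Q)^{-n}\int_{Q^*}\int_Q|b(x)-b(y)||h_Q^\epsilon(y)|\,dy\,dx$, which for a generic $b$ can dominate (and has no sign relation to) the ``principal'' contribution $-K_j(\cdot-y_Q)|Q^*|\langle b,h_Q^\epsilon\rangle$. Splitting $b=\langle b\rangle_{Q^*}+(b-\langle b\rangle_{Q^*})$ and VMO give no gain of a power of $\ell(Q)$, and the ``scale-wise rearrangement'' you invoke is not described; as written this is a genuine gap at exactly the step you yourself flag as the heart of the proof.

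The paper sidesteps this cancellation problem entirely by choosing different test functions: rather than $h_Q^\epsilon$ and $\chi_{Q^*}$, it takes indicators of the sets $E_s^Q\subset Q$ and $F_s^{\hat Q}\subset\hat Q$ cut out by the median $m_b(\hat Q)$. On each product $E_s^Q\times F_s^{\hat Q}$ the factor $b(x)-b(y)$ is sign-definite (the median sits between $b(x)$ and $b(y)$), and $K_j(x-y)$ is sign-definite by the choice of $\hat Q$, so the double integral equals its absolute value. This gives the pointwise inequality $|\langle b,h_Q^\epsilon\rangle||Q|^{-1/2}\lesssim\sum_{s=1,2}|\langle TG_{\hat Q}^s,H_Q^s\rangle|$ directly, with no kernel expansion and no error term to absorb, and the weight is then inserted exactly as you do. If you want to complete your route you would need to quantify the error $E_Q$ (e.g.\ show $\sum_Q|E_Q|^p\lesssim\|T\|_{S^p}^p$ by a separate NWO argument) and then subtract it off, but that requires substantial additional work not indicated in the proposal; adopting the median/sign decomposition is the cleaner fix.
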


\begin{pro}\label{proS2}
For $1<n<p<\infty$, suppose that $w\in A_2$, and $b\in B_{d}^{p}(\mathbb{R}^n)$, we have
\begin{align*}
\|[b,R_{j}]\|_{S^{p}(L^{2}(w))}\lesssim\|b\|_{B_{d}^{p}(\mathbb{R}^n)}.
\end{align*}
\end{pro}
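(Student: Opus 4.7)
The plan is to reduce to the unweighted case via Lemma \ref{Sweight} and then produce a rank-one decomposition of $w^{1/2}[b,R_j]w^{-1/2}$ indexed by the Haar basis, so that the Schatten bound falls out of the NWO inequality \eqref{eq-NWO1}. First, applying Lemma \ref{Sweight} reduces the target estimate to
\[
\|w^{1/2}[b,R_j]w^{-1/2}\|_{S^p(L^2(\mathbb{R}^n))}\lesssim\|b\|_{B_d^p(\mathbb{R}^n)}.
\]
Expanding $b=\sum_{Q\in\mathscr{D},\,\epsilon\not\equiv 1}\langle b,h^\epsilon_Q\rangle\, h^\epsilon_Q$ in the Haar basis attached to $\mathscr{D}$, the operator splits as
\[
w^{1/2}[b,R_j]w^{-1/2}=\sum_{Q,\epsilon\not\equiv 1}\langle b,h^\epsilon_Q\rangle\, w^{1/2}[h^\epsilon_Q,R_j]w^{-1/2}.
\]

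Second, for each $Q\in\mathscr{D}$ I would choose a translated ``partner cube'' $\widetilde Q$ whose centre lies at distance $\simeq\ell(Q)$ from $Q$ along the $j$-th axis. By design the Riesz kernel $K_j(x,y)=c_n(x_j-y_j)/|x-y|^{n+1}$ has fixed sign and magnitude $\simeq|Q|^{-1}$ uniformly in $x\in\widetilde Q$ and $y\in Q$. Since $h^\epsilon_Q$ is supported in $Q$, for $x\in\widetilde Q$ the kernel of $[h^\epsilon_Q,R_j]$ reduces to $-h^\epsilon_Q(y)K_j(x,y)$. Using the mean-zero property of $h^\epsilon_Q$ together with a first-order Taylor expansion of $K_j(x,\cdot)$ at the centre of $Q$, I extract a principal rank-one contribution proportional to $|Q|^{-1}\,\chi_{\widetilde Q}(x)\,h^\epsilon_Q(y)$, with the remainder kernel smoother and localised to annular shells $2^k\widetilde Q\setminus 2^{k-1}\widetilde Q$; these shell errors decay geometrically in $k$ and can be reassembled into a secondary family of rank-one operators indexed by $(Q,k)$. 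A standard Calder\'on--Zygmund splitting of $K_j$ treats the on-diagonal region $x,y\in Q$ in the same spirit.

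Third, I attach the weights by setting
\[
e_{Q,\epsilon}(y):=\frac{w^{-1/2}(y)h^\epsilon_Q(y)}{\|w^{-1/2}h^\epsilon_Q\|_{L^2}},\qquad f_{Q,\epsilon}(x):=\frac{w^{1/2}(x)\chi_{\widetilde Q}(x)}{\|w^{1/2}\chi_{\widetilde Q}\|_{L^2}},\qquad \lambda_{Q,\epsilon}\simeq \frac{\langle b,h^\epsilon_Q\rangle}{|Q|^{1/2}}.
\]
The hypothesis $w\in A_2$ is what certifies the NWO property: by Lemma \ref{reverse} applied to $w$ and to $w^{-1}\in A_2$, there is $\sigma_w>0$ such that $w^{\pm(1+\sigma_w)}$ satisfy reverse H\"older. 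Taking $r:=2(1+\sigma_w)>2$ and using $|h^\epsilon_Q|=|Q|^{-1/2}\chi_Q$, a direct computation yields $\|e_{Q,\epsilon}\|_{L^r}+\|f_{Q,\epsilon}\|_{L^r}\lesssim|Q|^{1/r-1/2}$, which is precisely the $L^r$-sufficient condition for NWO stated just before Lemma \ref{eq-NWO}.

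Finally, feeding the decomposition into \eqref{eq-NWO1} and invoking the weighted rewriting of the dyadic Besov norm displayed just before Lemma \ref{lemA1} gives
\[
\|w^{1/2}[b,R_j]w^{-1/2}\|_{S^p}\lesssim\Bigl(\sum_{Q,\epsilon\not\equiv 1}|\lambda_{Q,\epsilon}|^p\Bigr)^{1/p}\simeq \|b\|_{B_d^p(\mathbb{R}^n)},
\]
and combining with Step 1 closes the argument. The main obstacle, I expect, is Step 2: executing the rank-one-plus-smoother-error splitting of $[h^\epsilon_Q,R_j]$ cleanly enough that (i) the weights $w^{\pm 1/2}$ enter only through the $L^r$-NWO check in Step 3 and (ii) the summation of shell errors in $k$ survives the outer $\ell^p$-norm on $\{\lambda_{Q,\epsilon}\}$. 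The convergence of that latter sum is where the restriction $p>n$ most naturally intervenes.
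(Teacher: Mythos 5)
Your overall plan — reduce to the unweighted case by Lemma \ref{Sweight}, produce a rank-one decomposition, and feed it to \eqref{eq-NWO1} — is sound in spirit, but the specific decomposition in Step~2 has a genuine gap, and it is not the one you anticipate.

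The problem is that the Haar expansion of $b$ produces terms $[h^{\epsilon}_{Q},R_{j}]$ that are, individually, \emph{not} in $S^{p}$ for any $p<\infty$ (indeed not even compact). A Haar function $h^{\epsilon}_{Q}$ has a jump across a codimension-one face inside $Q$, so $h^{\epsilon}_{Q}\notin{\rm VMO}$, and a direct computation gives $\|h^{\epsilon}_{Q}\|_{B^{p,p}_{n/p}}=\infty$ (the integrand $|h^{\epsilon}_{Q}(x)-h^{\epsilon}_{Q}(y)|^{p}/|x-y|^{2n}$ is $\asymp |x-y|^{-2n}$ on a region of measure $\asymp r^{n+1}$ at scale $r$, and $\int_{0}^{1} r^{-n}\,dr$ diverges for $n\ge 1$). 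By the very theorem you are proving, this forces $[h^{\epsilon}_{Q},R_{j}]\notin S^{p}$. Concretely, the trouble sits exactly in the on-diagonal piece $x,y\in Q$, which you dismiss as ``a standard Calder\'on--Zygmund splitting in the same spirit'': on $Q\times Q$ the kernel $(h^{\epsilon}_{Q}(x)-h^{\epsilon}_{Q}(y))K_{j}(x-y)$ has a Hilbert-matrix-type singularity across the child boundaries where $h^{\epsilon}_{Q}$ jumps, there is no mean-zero cancellation to exploit (that device works off-diagonal, where $y\in Q$ but $x\notin Q$), and a Whitney/Fourier decomposition of $Q\times Q$ would yield rank-one coefficients of size $\approx |Q|^{-1/2}$ with $\approx 2^{m(n-1)}$ pieces at scale $2^{-m}\ell(Q)$, so the $\ell^{p}$ sum already diverges for $n>1$. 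Thus a term-by-term NWO bound on $\sum_{Q}\langle b,h^{\epsilon}_{Q}\rangle[h^{\epsilon}_{Q},R_{j}]$ cannot close; the compactness of $[b,R_{j}]$ relies on cancellation across different $Q$, and your decomposition destroys that.

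This is exactly the obstruction the paper is engineered to avoid. Instead of expanding $b$ against $R_{j}$ directly, it first replaces $R_{j}$ by its Petermichl--Treil--Volberg average of dyadic shifts $\mathrm{III}$, so the singular kernel disappears: the shift acts on the Haar scale by $\mathrm{III}\,h^{\epsilon}_{Q}=h^{\sigma(\epsilon)}_{\sigma(Q)}$ and has no diagonal singularity. Then $[b,\mathrm{III}]$ is split (following Holmes--Lacey--Wick) into the paraproducts $\Pi^{\mathscr{D}}_{b}$, $\Pi^{*\mathscr{D}}_{b}$, $\Gamma^{\mathscr{D}}_{b}$ and a remainder $\mathfrak{R}$, each of which is already a sum of Haar-indexed rank-one operators with scalar coefficients exactly of the form $\langle b,h^{\epsilon}_{Q}\rangle|Q|^{-1/2}$; the weight is then absorbed by the reverse-H\"older $L^{r}$ check, precisely as in your Step~3. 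So your Steps~1 and~3 align with the paper, but Step~2 needs to be replaced by the dyadic-shift-plus-paraproduct decomposition. As a side note, the geometric decay $2^{-k(n/2+1)}$ that you would get from the off-diagonal Taylor expansion makes the $k$-sum converge for every $p>0$, so the constraint $p>n$ does not in fact enter there; it enters through the dyadic-vs-continuous Besov equivalence of Lemma \ref{lemA1} and the constancy cut-off of Proposition \ref{proS3}.
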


\subsection{Proof of Proposition \ref{proS1}}

Below, we consider cubes $Q \in \mathscr{D}$, a fixed dyadic system.
To prove Proposition \ref{proS1}, we first need to use a known result.
\begin{lem}\label{lemA4}
For each dyadic cube $Q$, there exists another dyadic cube $\hat{Q}$ such that

{\rm{(i)}} $|Q|=|\hat{Q}|$, and $\mathrm{distance}(Q,\hat{Q})\approx |Q|$.

{\rm{(ii)}} The kernel of Riesz transform $K_{j}(x-\hat{x})$ does not change sign for all $(x, \hat{x}) \in Q \times \hat{Q}$ and
\begin{align}\label{Kernel}
|K_{j}(x-\hat{x})| \gtrsim \frac{1}{|Q|} .
\end{align}
\end{lem}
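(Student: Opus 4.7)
The plan is to construct $\hat{Q}$ explicitly as a translate of $Q$ by a controlled multiple of its sidelength in the direction $e_j$. Writing $\ell=\ell(Q)$ and picking some fixed integer $N\ge 2$, I would look for $\hat{Q}$ of the form $Q + N\ell\, e_j$; for a dyadic $Q=2^{-k}(m+[0,1)^n)$ with $m\in\mathbb{Z}^n$ (or a shifted dyadic analogue), the cube $\hat{Q}:=2^{-k}(m+Ne_j+[0,1)^n)$ is again dyadic in the same system. By construction $|\hat{Q}|=|Q|$, so (i) reduces to checking that $\mathrm{distance}(Q,\hat{Q})=(N-1)\ell$, which is $\approx\ell(Q)$ — this is the natural reading of the stated ``$\approx|Q|$''.

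For (ii), recall $K_j(y)=c_n\,y_j/|y|^{n+1}$. For any $(x,\hat{x})\in Q\times\hat{Q}$ the $j$-th component satisfies
\begin{equation*}
x_j-\hat{x}_j\in 2^{-k}\bigl(-(N+1),\,-(N-1)\bigr),
\end{equation*}
which has a fixed sign (negative) for every such pair, so $K_j(x-\hat{x})$ does not change sign on $Q\times\hat{Q}$. At the same time $|x_j-\hat{x}_j|\gtrsim \ell$ and $|x-\hat{x}|\lesssim \ell$ (with constants depending only on $n$ and $N$), hence
\begin{equation*}
|K_j(x-\hat{x})|\;\gtrsim\;\frac{\ell}{\ell^{n+1}}\;=\;\frac{1}{\ell^n}\;=\;\frac{1}{|Q|},
\end{equation*}
which is (ii).

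The main obstacle I anticipate is purely bookkeeping: ensuring the translate $\hat{Q}$ still lies in the \emph{same} dyadic system $\mathscr{D}^\omega$ that was fixed (so that $\hat Q$ is available in the later dyadic arguments), and keeping the constants in $\gtrsim$ and $\approx$ uniform in $k$. Both are handled by choosing a single universal integer $N$ (e.g.\ $N=2$) and noting that adding $Ne_j\in\mathbb{Z}^n$ to the integer translate $m$ (or to $m+(-1)^k\omega$ in the shifted case) preserves membership in $\mathscr{D}^\omega_k$. After that, the sign and lower bound on $K_j$ follow immediately from elementary estimates on the numerator and denominator.
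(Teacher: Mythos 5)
Your proof is correct, and it is the natural construction. The paper does not actually prove Lemma \ref{lemA4} at all: it is introduced with the phrase ``we first need to use a known result'' and stated without argument or citation, so there is no proof in the paper to compare against. Your explicit choice $\hat Q=Q+N\ell e_j$ with a fixed integer $N\ge 2$ supplies exactly what is needed: it stays in the same dyadic system (including the shifted systems $\mathscr{D}^\omega$, since $Ne_j\in\mathbb{Z}^n$); the cubes have the same sidelength; the $j$-th coordinate difference $x_j-\hat x_j$ lies in $\bigl(-(N+1)\ell,\,-(N-1)\ell\bigr)$, hence has fixed sign; and combining $|x_j-\hat x_j|\gtrsim\ell$ with $|x-\hat x|\lesssim\ell$ gives $|K_j(x-\hat x)|\gtrsim \ell^{-n}=|Q|^{-1}$ with implied constants depending only on $n$ and $N$. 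You also correctly flag that the paper's ``$\mathrm{distance}(Q,\hat Q)\approx|Q|$'' is dimensionally off and must mean $\approx\ell(Q)$; the paper repeats the same slip later in Section 5. No gaps.
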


Let $m_{b}(\hat{Q})$ be a median value of $b$ over $\hat{Q}$. This means $m_{b}(\hat{Q})$ is a real number such that
\begin{align}\label{median}
E_{1}^{Q}:=\left\{y \in Q: b(y)<m_{b}(\hat{Q})\right\} \quad\text { and }\quad E_{2}^{Q}:=\left\{y \in Q: b(y)>m_{b}(\hat{Q})\right\} .
\end{align}

We note that the upper bound $\left|E_{m}^{Q}\right| \leq \frac{1}{2}|Q|$ for $m=1,2$. A median value always exists, but may not be unique.

By $\int_{Q}h^{\epsilon}_{Q}(x)dx=0$ and using \eqref{median}, a simple calculation gives
$$
\begin{aligned}
\left|\int_{Q} b(x) h^{\epsilon}_{Q}(x) d x\right| &=\left|\int_{Q}\left(b(x)-m_{b}(\hat{Q})\right) h^{\epsilon}_{Q}(x) d x\right|
 \leq \frac{1}{|Q|^{\frac{1}{2}}} \int_{Q}\left|b(x)-m_{b}(\hat{Q})\right| d x \\
& \leq \frac{1}{|Q|^{\frac{1}{2}}} \int_{Q \cap E_{1}^{Q}}\left|b(x)-m_{b}(\hat{Q})\right| d x+\frac{1}{|Q|^{\frac{1}{2}}}  \int_{Q \cap E_{2}^{Q}}\left|b(x)-m_{b}(\hat{Q})\right| d x \\
&=: \operatorname{Term}_{1}^{Q}+\operatorname{Term}_{2}^{Q} .
\end{aligned}
$$
Now we denote
$$
F_{1}^{\hat{Q}}:=\left\{y \in \hat{Q}: b(y) \geq m_{b}(\hat{Q})\right\}\quad \text { and }\quad F_{2}^{\hat{Q}}:=\left\{y \in \hat{Q}: b(y) \leq m_{b}(\hat{Q})\right\} .
$$
Then by the definition of $b_{\hat{Q}}$, we have $\left|F_{1}^{\hat{Q}}\right|=\left|F_{2}^{\hat{Q}}\right| \approx|\hat{Q}|$ and $F_{1}^{\hat{Q}} \cup F_{2}^{\hat{Q}}=\hat{Q}$. Note that for $s=1,2$, if $x \in E_{s}^{Q}$ and $y \in F_{s}^{\hat{Q}}$, then
\begin{align*}
\left|b(x)-m_{b}(\hat{Q})\right| &\leq\left|b(x)-m_{b}(\hat{Q})\right|+\left|m_{b}(\hat{Q})-b(y)\right|\\
&=\left|b(x)-m_{b}(\hat{Q})+m_{b}(\hat{Q})-b(y)\right|=|b(x)-b(y)| .
\end{align*}
Therefore, for $s=1,2$, by using \eqref{Kernel} and by the fact that $|F_{s}^{\hat{Q}}| \approx|Q|$, we have
$$
\begin{aligned}
\operatorname{Term}_{s}^{Q} & \lesssim \frac{1}{|Q|^{\frac{1}{2}}} \int_{Q \cap E_{s}^{Q}}|b(x)-m_{b}(\hat{Q})| d x \frac{|F_{s}^{\hat{Q}}|}{|Q|} \\
&=\frac{1}{|Q|^{\frac{1}{2}}}  \int_{Q \cap E_{s}^{Q}} \int_{F_{s}^{\hat{Q}}}|b(x)-m_{b}(\hat{Q})| \frac{1}{|Q|} d y d x \\
& \lesssim \frac{1}{|Q|^{\frac{1}{2}}} \int_{Q \cap E_{s}^{Q}} \int_{F_{s}^{\hat{Q}}}|b(x)-m_{b}(\hat{Q})||K_j(x-y)| d y d x \\
& \lesssim \frac{1}{|Q|^{\frac{1}{2}}} \int_{Q \cap E_{s}^{Q}} \int_{F_{s}^{\hat{Q}}}|b(x)-b(y)||K_j(x-y)| dy dx.
\end{aligned}
$$

To continue, by noting that $K_j(x-y)$ and $b(x)-b(y)$ does not change sign for $(x, y) \in\left(Q \cap E_{s}^{Q}\right) \times F_{s}^{\hat{Q}}$, $s=1,2$, we have that
$$
\begin{aligned}
\operatorname{Term}_{s}^{Q} & \lesssim \frac{1}{|Q|^{\frac{1}{2}}} \left|\int_{Q \cap E_{s}^{Q}} \int_{F_{s}^{Q}}(b(x)-b(y)) K_j(x-y) d y d x\right| \\
&=\frac{1}{|Q|^{\frac{1}{2}}} \left|\int_{\mathbb{R}^n} \int_{\mathbb{R}^n}(b(x)-b(y)) K_j(x-y) \chi_{F_{s}^{\hat{Q}}}(y) d y \chi_{Q \cap E_{s}^{Q}}(x) d x\right| .
\end{aligned}
$$
We now insert the weight $w$ to get
\begin{align*}
\operatorname{Term}_{s}^{Q}
&\lesssim \frac{1}{|Q|^{\frac{1}{2}}} \bigg|\int_{\mathbb{R}^n} \int_{\mathbb{R}^n}(b(x)-b(y)) w^{\frac{1}{2}}(x) K_j(x-y) w^{-\frac{1}{2}}(y)
\bigg(w^{\frac{1}{2}}(y) \chi_{F_{s}^{\hat{Q}}}(y)\bigg) d y
\bigg(w^{-\frac{1}{2}}(x) \chi_{Q \cap E_{s}^{Q}}(x)\bigg) d x\bigg| .
\end{align*}
Thus, we further have
$$
\begin{aligned}
&{\sum_{Q \in \mathscr{D},\epsilon \not \equiv 1}} \left(\frac{|\langle b, h^{\epsilon}_{Q} \rangle||Q|^{\frac{1}{2}}}{w(Q)^{\frac{1}{2}}(w^{-1}(Q))^{\frac{1}{2}}} \right)^p \\
&\lesssim{ \sum_{Q \in \mathscr{D}, \epsilon\not\equiv1} \sum_{s=1}^{2}}\Bigg|\int_{\mathbb{R}^{n}} \int_{\mathbb{R}^{n}}(b(x)-b(y)) w^{\frac{1}{2}}(x) K_j(x-y) w^{-\frac{1}{2}}(y)\bigg(w^{\frac{1}{2}}(y) \chi_{F_{s}^{\hat{Q}}}(y)\bigg) d y\frac{\Big(w^{-\frac{1}{2}}(x) \chi_{Q \cap E_{s}^{Q}}(x)\Big)}{w(Q)^{\frac{1}{2}}(w^{-1}(Q))^{\frac{1}{2}}} d x\Bigg|^{p} \\
&\lesssim {\sum_{Q \in \mathscr{D}, \epsilon\not\equiv1} \sum_{s=1}^{2}}\bigg|\int_{\mathbb{R}^{n}} \int_{\mathbb{R}^{n}}(b(x)-b(y)) w^{\frac{1}{2}}(x) K_j(x-y) w^{-\frac{1}{2}}(y) \frac{w^{\frac{1}{2}}(y) \chi_{F_{s}^{\hat{Q}}}(y)}{w(\hat{Q})^{\frac{1}{2}}} d y \frac{w^{-\frac{1}{2}}(x) \chi_{Q \cap E_{s}^{Q}}(x)}{(w^{-1}(Q))^{\frac{1}{2}}} d x\bigg|^{p}
\\&=:\sum_{Q \in \mathscr{D}, \epsilon\not\equiv1} \sum_{s=1}^{2} \left|\left\langle w^{\frac{1}{2}}[b, R_{j}] w^{-\frac{1}{2}} G^{s}_{\hat{Q}}, H^{s}_{Q}\right\rangle\right|^{p},
\end{aligned}
$$
where
$$
G^{s}_{\hat{Q}}(y):=\frac{w^{\frac{1}{2}}(y) \chi_{F_{s}^{\hat{Q}}}(x)}{w(\hat{Q})^{\frac{1}{2}}} \quad \text { and } \quad H^{s}_{Q}(x):= \frac{w^{-\frac{1}{2}}(x) \chi_{Q \cap E_{s}^{Q}}(x)}{(w^{-1}(Q))^{\frac{1}{2}}}.
$$
Applying Lemma \ref{reverse}, there is a reverse doubling index $\sigma_{w}>0$, we have
\begin{align*}
\|G^{s}_{\hat{Q}}\|_{L^{2(\sigma_{w}+1)}}&\lesssim\frac{1}{w(\hat{Q})^{\frac{1}{2}}}\bigg(\int_{\hat{Q}}w^{(\sigma_{w}+1)}(x) dx\bigg)^{\frac{1}{2(\sigma_{w}+1)}}\lesssim |\hat{Q}|^{\frac{1}{2(\sigma_{w}+1)}-\frac{1}{2}}.
\end{align*}
Similarly, $\|H^{s}_{Q}\|_{L^{2(\sigma_{w}+1)}}\lesssim|Q|^{\frac{1}{2(\sigma_{w}+1)}-\frac{1}{2}}$. Then, $\{G^{s}_{\hat{Q}}\}_{\hat{Q}\in\mathscr{D}}$ and $\{H^{s}_{Q}\}_{Q\in\mathscr{D}}$ are NWO sequences for $L^{2}(\mathbb{R}^n)$.
It follows from Lemma \ref{Sweight} and Lemma \ref{eq-NWO} that
$$
\|b\|_{B_{d}^{p}(\mathbb{R}^n)}\lesssim\|w^{\frac{1}{2}}[b, R_{j}] w^{-\frac{1}{2}}\|_{S^{p}\left(L^{2}(\mathbb{R}^n)\right)}\approx\|[b, R_{j}] \|_{S^{p}\left(L^{2}(w)\right)} .
$$
The proof of Proposition \ref{proS1} is complete.

\subsection{Proof of Proposition \ref{proS2}}
In \cite{CDW2019}, the authors have obtained that: for $w\in A_2$, for every $b\in B_{n/p}^{p,p}(\mathbb{R}^n)\subset {\rm{VMO}}$, $[b,R_{j}]$ is compact form $L^{2}(w)$ to $L^{2}(w)$. On the other hand, Petermichl, Treil and Volberg have given that Riesz transforms are averages of the dyadic shift in \cite{PTV2002}. See also \cite{Pe2008}.
On a fixed dyadic cubes $\mathscr{D}$ with Haar basis $\{h^{\varepsilon}_{Q}\}$, {  let $\sigma:\mathscr{D}\rightarrow \mathscr{D}$ such $|\sigma(Q)|=2^{-n}|\sigma(Q)|$, for all $Q\in \mathscr{D}$. Using the same notation for a map $
\sigma: \{0,1\}^{n}-\{1\}^n \longrightarrow \{\{0,1\}^{n}-\{1\}^n\} \cup\{0\},$ if $\sigma(\epsilon)=0$ then $h^{\sigma(\epsilon)}:=0$. In \cite{LPPW2008}, the authors define a dyadic shift operator $\mathrm{III}$ by
\begin{align}
	\mathrm{III}f(x):=\sum_{Q\in\mathscr{D},\epsilon\not\equiv1}\langle f,h^{\epsilon}_{Q} \rangle h^{\sigma(\epsilon)}_{\sigma(Q)}(x).
\end{align}}
It is clear that
$$\mathrm{III}h^{\epsilon}_Q=h^{\sigma(\epsilon)}_{\sigma(Q)}.$$
 We further have $\|\mathrm{III}\|_{L^{2}(w)\rightarrow L^{2}(w)}\lesssim 1$ and the Riesz transforms are in the convex hull of the class of operators $\mathrm{III}$.
Therefore, we only need to prove that
$$
\|[b, \mathrm{III}]\|_{S^{p}\left(L^{2}(w)\right)} \lesssim\|b\|_{B_{d}^{p}(\mathbb{R}^n)}.
$$
As proved by \cite{HLW2017}, $[b, \mathrm{III}]$ is decomposed into ``paraproducts'' operators as follows
\begin{align*}
(\Pi^{\mathscr{D}}_{b}+\Pi^{*\mathscr{D}}_{b}+\Gamma^{\mathscr{D}}_{b})(\mathrm{III}f)
{ -}\mathrm{III}(\Pi^{\mathscr{D}}_{b}+\Pi^{*\mathscr{D}}_{b}+\Gamma^{\mathscr{D}}_{b})f+\Pi^{\mathscr{D}}_{\mathrm{III}f}b-\mathrm{III}(\Pi^{\mathscr{D}}_{f}b),
\end{align*}
where
\begin{align*}
\Pi^{\mathscr{D}}_{b}f=\sum_{Q\in\mathscr{D},\epsilon\not\equiv 1}\langle b, h^{\epsilon}_{Q}\rangle{ \langle f\rangle_{Q}}h^{\epsilon}_{Q},
\end{align*}
\begin{align*}
\Pi^{*\mathscr{D}}_{b}f=\sum_{Q\in\mathscr{D},\epsilon\not\equiv 1}\langle b, h^{\epsilon}_{Q}\rangle\langle f, h^{\epsilon}_{Q}\rangle\frac{\chi_{Q}}{|Q|}
\end{align*}
and
\begin{align*}
\Gamma^{\mathscr{D}}_{b}f=\sum_{Q\in\mathscr{D}}\sum_{\substack{\epsilon,\eta\not\equiv 1\\\epsilon\not\equiv\eta}}\langle b, h^{\epsilon}_{Q}\rangle\langle f, h^{{ \eta}}_{Q}\rangle h^{\epsilon}_{Q}h^{\eta}_{Q}
\end{align*}
are the ``paraproduct" operators with symbol $b$. Then we have
\begin{align*}
\|[b, \mathrm{III}]\|_{S^{p}\left(L^{2}(w)\right)}
&\leq2\|\Pi^{\mathscr{D}}_{b}\|_{S^{p}\left(L^{2}(w)\right)}\|\mathrm{III}\|_{L^{2}(w)\rightarrow L^{2}(w)}
\\&\quad+2\|\Pi^{*\mathscr{D}}_{b}\|_{S^{p}\left(L^{2}(w)\right)}\|\mathrm{III}\|_{L^{2}(w)\rightarrow L^{2}(w)}
\\&\quad+2\|\Gamma^{\mathscr{D}}_{b}\|_{S^{p}\left(L^{2}(w)\right)}\|\mathrm{III}\|_{L^{2}(w)\rightarrow L^{2}(w)}
\\&\quad+\|\Pi^{\mathscr{D}}_{\mathrm{III}f}b-\mathrm{III}(\Pi^{\mathscr{D}}_{f}b)\|_{S^{p}\left(L^{2}(w)\right)}.
\end{align*}
Thus, in order to show that Proposition \ref{proS2}, Using Lemma \ref{Sweight}, we need to obtain the following two lemmas.
\begin{lem}\label{LemS4}
For $1<n<p<\infty$, suppose that $w\in A_2$, and $b\in {\rm{VMO}(\mathbb{R}^n)}$, we have{  $w^{\frac{1}{2}}\Pi^{\mathscr{D}}_{b}w^{-\frac{1}{2}}$,$w^{\frac{1}{2}}\Pi^{*\mathscr{D}}_{b}w^{-\frac{1}{2}}$, and $w^{\frac{1}{2}}\Gamma^{\mathscr{D}}_{b}w^{-\frac{1}{2}}$ belong to $S^{p}(L^{2}(\mathbb{R}^{n}))$ respectively, if and only if $b\in B_{d}^{p}(\mathbb{R}^n)$, that is,}
\begin{align}\label{eqLem4-1}
\|w^{\frac{1}{2}}\Pi^{\mathscr{D}}_{b}w^{-\frac{1}{2}}\|_{S^{p}(L^{2}(\mathbb{R}^{n}))}{ \approx}\|b\|_{B_{d}^{p}(\mathbb{R}^n)};
\end{align}
\begin{align}\label{eqLem4-2}
\|w^{\frac{1}{2}}\Pi^{*\mathscr{D}}_{b}w^{-\frac{1}{2}}\|_{S^{p}(L^{2}(\mathbb{R}^{n}))}{ \approx}\|b\|_{B_{d}^{p}(\mathbb{R}^n)};
\end{align}
\begin{align}\label{eqLem4-3}
\|w^{\frac{1}{2}}\Gamma^{\mathscr{D}}_{b}w^{-\frac{1}{2}}\|_{S^{p}(L^{2}(\mathbb{R}^{n}))}{ \approx}\|b\|_{B_{d}^{p}(\mathbb{R}^n)}.
\end{align}
\end{lem}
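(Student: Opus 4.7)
The plan is to treat \eqref{eqLem4-1}--\eqref{eqLem4-3} by a single two-sided scheme based on weighted NWO sequences. For the upper bound, I would write $w^{1/2}Pw^{-1/2}=\sum_{Q,\epsilon\not\equiv 1}\lambda_{Q,\epsilon}\langle\,\cdot\,,e_Q\rangle f^{\epsilon}_Q$ with $\{e_Q\}$ and $\{f^{\epsilon}_Q\}$ NWO sequences in $L^2(\mathbb{R}^n)$ and $|\lambda_{Q,\epsilon}|\approx|\langle b,h^{\epsilon}_Q\rangle||Q|^{-1/2}$, then apply \eqref{eq-NWO1} to conclude $\|w^{1/2}Pw^{-1/2}\|_{S^p}\lesssim\|b\|_{B_d^p}$. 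For the lower bound, I would test $w^{1/2}Pw^{-1/2}$ against a second pair of weighted NWO sequences $\tilde e_Q,\tilde f^{\epsilon}_Q$ whose unweighted pairing extracts exactly $\langle b,h^{\epsilon}_Q\rangle|Q|^{-1/2}$, and invoke Lemma \ref{eq-NWO}. The balance sheet throughout is the $A_2$ identity $w(Q)w^{-1}(Q)\approx|Q|^2$ together with reverse H\"older (Lemma \ref{reverse}).

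I would execute this for $\Pi^{\mathscr{D}}_b$ in detail. Expanding $w^{1/2}\Pi^{\mathscr{D}}_b w^{-1/2}f=\sum_{Q,\epsilon}\langle b,h^{\epsilon}_Q\rangle\langle w^{-1/2}f\rangle_Q\, w^{1/2}h^{\epsilon}_Q$, set $e_Q=w^{-1/2}\chi_Q/(w^{-1}(Q))^{1/2}$ and $f^{\epsilon}_Q=|Q|^{1/2}w^{1/2}h^{\epsilon}_Q/w(Q)^{1/2}$; both are $L^2$-normalised, and reverse H\"older applied to $w$ and $w^{-1}$ separately yields $\|e_Q\|_r+\|f^{\epsilon}_Q\|_r\lesssim|Q|^{1/r-1/2}$ for some $r>2$, so the two are NWO. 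The resulting coefficient is $\lambda_{Q,\epsilon}=\langle b,h^{\epsilon}_Q\rangle(w^{-1}(Q)w(Q))^{1/2}|Q|^{-3/2}\approx\langle b,h^{\epsilon}_Q\rangle|Q|^{-1/2}$ by $A_2$, completing the upper bound via \eqref{eq-NWO1}. For the lower bound, take $\tilde e_Q=w^{1/2}\chi_Q/w(Q)^{1/2}$ and $\tilde f^{\epsilon}_Q=|Q|^{1/2}w^{-1/2}h^{\epsilon}_Q/(w^{-1}(Q))^{1/2}$, again NWO by reverse H\"older. The pairing $\langle w^{1/2}\Pi^{\mathscr{D}}_b w^{-1/2}\tilde e_Q,\tilde f^{\epsilon}_Q\rangle=\langle \Pi^{\mathscr{D}}_b(\chi_Q/w(Q)^{1/2}),|Q|^{1/2}h^{\epsilon}_Q/(w^{-1}(Q))^{1/2}\rangle$ collapses under unweighted Haar orthogonality (every $P\supsetneq Q$ contributes zero because $h^{\eta}_P$ is constant on $Q$ while $h^{\epsilon}_Q$ has mean zero; every $P\subsetneq Q$ contributes zero symmetrically) to $\langle b,h^{\epsilon}_Q\rangle|Q|^{1/2}/(w(Q)w^{-1}(Q))^{1/2}\approx\langle b,h^{\epsilon}_Q\rangle|Q|^{-1/2}$, and Lemma \ref{eq-NWO} closes \eqref{eqLem4-1}.

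The remaining two estimates would follow by the same plan. For \eqref{eqLem4-2}, $\Pi^{*\mathscr{D}}_b$ is structurally the transpose of $\Pi^{\mathscr{D}}_b$, so the scheme transfers after swapping the averaging and cancellative factors between $e_Q$ and $f^{\epsilon}_Q$ (and between $\tilde e_Q$ and $\tilde f^{\epsilon}_Q$). For \eqref{eqLem4-3}, the algebraic identity $h^{\epsilon}_Q h^{\eta}_Q=\pm|Q|^{-1/2}h^{\zeta}_Q$, valid whenever $\epsilon\not\equiv\eta$ with a cancellative $\zeta\not\equiv 1$ depending on $(\epsilon,\eta)$, reduces $\Gamma^{\mathscr{D}}_b$ to a finite combination of rank-one operators of the same shape as those produced in the decomposition of $\Pi^{\mathscr{D}}_b$, to which the identical NWO apparatus applies. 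The principal obstacle throughout is the loss of Haar orthogonality inside $L^2(w)$: one cannot mimic the unweighted Rochberg--Semmes coefficient extraction directly in the weighted inner product. The fix is to conjugate by $w^{\pm 1/2}$ and choose weighted test sequences proportional to $w^{\pm 1/2}\chi_Q$ and $w^{\mp 1/2}h^{\epsilon}_Q$; reverse H\"older keeps those sequences NWO uniformly in $Q$, while $A_2$ collapses the leftover weight factors down to the single normalising factor $|Q|^{-1/2}$.
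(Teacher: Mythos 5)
Your proposal is correct and follows essentially the same NWO-plus-reverse-H\"older scheme as the paper. For \eqref{eqLem4-1} and \eqref{eqLem4-2} the paper simply cites \cite{LLW2022} (one dimension, with the remark that the argument extends to $\mathbb{R}^n$) and proves only \eqref{eqLem4-3} in detail; your sketch of the $\Pi^{\mathscr{D}}_b$ case is the argument that reference runs, and the necessity step for $\Gamma^{\mathscr{D}}_b$ in the paper uses precisely the identity $\langle b,h^{\epsilon}_{Q}\rangle=\langle\Gamma^{\mathscr{D}}_{b}(h^{\eta}_{Q}),\,h^{\epsilon}_{Q}h^{\eta}_{Q}|Q|\rangle$ together with the weighted test functions $G'_{Q}\propto w^{1/2}h^{\epsilon}_{Q}/w(Q)^{1/2}$, $H'_{Q}\propto w^{-1/2}h^{\epsilon}_{Q}h^{\eta}_{Q}|Q|/(w^{-1}(Q))^{1/2}$, which is the same conjugation-by-$w^{\pm 1/2}$ device you describe. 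The one genuine stylistic difference is your algebraic collapse $h^{\epsilon}_{Q}h^{\eta}_{Q}=\pm|Q|^{-1/2}h^{\zeta}_{Q}$ (with $\zeta_i=1$ iff $\epsilon_i=\eta_i$, and $\zeta\not\equiv 1$ whenever $\epsilon\ne\eta$), which recasts $\Gamma^{\mathscr{D}}_b$ literally as a finite superposition of Haar-shift paraproducts; the paper instead keeps the product $h^{\epsilon}_{Q}h^{\eta}_{Q}$ intact inside the NWO functions $G_Q$, verifying the $L^r$ bounds directly. Both variants lead to the same coefficient $\approx\langle b,h^{\epsilon}_{Q}\rangle|Q|^{-1/2}$ after the $A_2$ balance $w(Q)w^{-1}(Q)\approx|Q|^2$, so the difference is cosmetic rather than structural. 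One minor point to keep in mind when you ``swap'' the factors for $\Pi^{*\mathscr{D}}_b$: the Hilbert-space adjoint of $w^{1/2}\Pi^{\mathscr{D}}_b w^{-1/2}$ is $w^{-1/2}\Pi^{*\mathscr{D}}_b w^{1/2}$, not $w^{1/2}\Pi^{*\mathscr{D}}_b w^{-1/2}$, so the transfer is not by taking adjoints but by redoing the NWO construction with the roles of $w$ and $w^{-1}$ (both in $A_2$) interchanged, exactly as you indicate.
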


\begin{proof}\eqref{eqLem4-1} and \eqref{eqLem4-2} in Lemma \ref{LemS4} has been proved by Lacey and the last two authors in \cite{LLW2022} in one dimension, which also generalizes to $n$ dimensions. It is therefore only necessary to prove \eqref{eqLem4-3}.

\textbf{Sufficiency:} Suppose $b\in B^{p}_{d}(\mathbb{R}^n)$. By the definition of $\Gamma^{\mathscr{D}}_{b}f$, we have
\begin{align*}
&~\left(w^{\frac{1}{2}}\Gamma^{\mathscr{D}}_{b}w^{-\frac{1}{2}}\right)(f)(x)\\
&=\sum_{Q\in\mathscr{D}}\sum_{\substack{\epsilon,\eta\not\equiv 1\\\epsilon\not\equiv\eta}}\langle b, h^{\epsilon}_{Q}\rangle\langle w^{-\frac{1}{2}}f, h^{{ \eta}}_{Q}\rangle h^{\epsilon}_{Q}(x)h^{\eta}_{Q}(x)w^{\frac{1}{2}}(x)\\
&=C_{[w]_{A_2}} \sum_{Q\in\mathscr{D}}\sum_{\substack{\epsilon,\eta\not\equiv 1\\\epsilon\not\equiv\eta}}\frac{\langle b, h^{\epsilon}_{Q}\rangle w(Q)^{\frac{1}{2}}(w^{-1}(Q))^{\frac{1}{2}}}{|Q|^{\frac{3}{2}}}\cdot\frac{h^{\epsilon}_{Q}(x)h^{\eta}_{Q}(x)w^{\frac{1}{2}}(x)|Q|}{w(Q)^{\frac{1}{2}}}
\int_{\mathbb{R}^n} \frac{w^{-\frac{1}{2}}(y)h^{{ \eta}}_{Q}(y)|Q|^{\frac{1}{2}}}{(w^{-1}(Q))^{\frac{1}{2}}}f(y)dy\\
&=:C_{[w]_{A_2}} \sum_{Q\in\mathscr{D}}\sum_{\substack{\epsilon,\eta\not\equiv 1\\\epsilon\not\equiv\eta}}B(Q)\cdot { G_Q(x)}\int_{\mathbb{R}^n}f(y)H_{Q}(y)dy.
\end{align*}
{ As proved in the proof of Proposition \ref{proS1}}, applying Lemma \ref{reverse} leads to ${ \{G_Q\}_{Q\in\mathscr{D}}}$ and ${ \{H_Q\}_{Q\in\mathscr{D}}}$ are NWO sequences for $L^2(\mathbb{R}^n)$, and by Lemma \ref{Sweight} and \eqref{eq-NWO1} imply that
\begin{align*}
\|\Gamma^{\mathscr{D}}_{b}\|^{p}_{S^{p}(L^{2}(w))}
=\|w^{\frac{1}{2}}\Gamma^{\mathscr{D}}_{b}w^{-\frac{1}{2}}\|^{p}_{S^{p}(L^{2}(\mathbb{R}^{n}))}
\leq \sum_{\substack{Q\in\mathscr{D}\\\epsilon\not\equiv1}}|B_{Q}|^{p}\approx\|b\|^{p}_{B_{n/p,d}^{p,p}(\mathbb{R}^n)}.
\end{align*}

\textbf{Necessity:}. For any dyadic cubes $Q$, we have { $\langle b,h^{\epsilon}_Q\rangle=\langle\Gamma^{\mathscr{D}}_{b}(h^{\eta}_{Q}),h^{\epsilon}_{Q}h^{\eta}_{Q}|Q|\rangle$}, Therefore,
\begin{align*}
 &~\sum_{{\substack{Q\in\mathscr{D}\\\epsilon\not\equiv1}}}\left(\frac{|\langle\Gamma^{\mathscr{D}}_{b}(h^{\eta}_{Q}),{ h^{\epsilon}_{Q}h^{\eta}_{Q}|Q|}\rangle||Q|^{\frac{1}{2}}}{w(Q)^{\frac{1}{2}}(w^{-1}(Q))^{\frac{1}{2}}} \right)^p\\
&=\sum_{{\substack{Q\in\mathscr{D}\\\epsilon\not\equiv1}}}\left(\frac{|
\langle w^{\frac{1}{2}}\Gamma^{\mathscr{D}}_{b}w^{-\frac{1}{2}}(w^{\frac{1}{2}}h^{\eta}_{Q}),{ w^{-\frac{1}{2}}h^{\epsilon}_{Q}h^{\eta}_{Q}|Q|}\rangle||Q|^{\frac{1}{2}}}{w(Q)^{\frac{1}{2}}(w^{-1}(Q))^{\frac{1}{2}}} \right)^p\\
&=\sum_{{\substack{Q\in\mathscr{D}\\\epsilon\not\equiv1}}}\left|\left\langle w^{\frac{1}{2}}\Gamma^{\mathscr{D}}_{b}w^{-\frac{1}{2}}\left(\frac{w^{\frac{1}{2}}\sqrt{|Q|}h^{\epsilon}_{Q}}{w(Q)^{\frac{1}{2}}}\right),
\frac{{ w^{-\frac{1}{2}}h^{\epsilon}_{Q}h^{\eta}_{Q}|Q|}}{(w^{-1}(Q))^{\frac{1}{2}}}\right\rangle \right|^p
\\&=:\sum_{{\substack{Q\in\mathscr{D}\\\epsilon\not\equiv1}}}\left|\left\langle w^{\frac{1}{2}}\Gamma^{\mathscr{D}}_{b}w^{-\frac{1}{2}}\left(G'_{Q}\right),
H'_{Q}\right\rangle \right|^p,
\end{align*}
where $$G'_{Q}:=\frac{w^{\frac{1}{2}}\sqrt{|Q|}h^{\epsilon}_{Q}}{w(Q)^{\frac{1}{2}}}
\quad \text{and}\quad
H'_{Q}:=\frac{w^{-\frac{1}{2}}{ h^{\epsilon}_{Q}h^{\eta}_{Q}}|Q|}{(w^{-1}(Q))^{\frac{1}{2}}}.$$
By Lemma \ref{reverse}, similar to the proof of Proposition \ref{proS1}, we can easily obtain that the above two collections of functions are NWO sequences. Thus, we establish by Lemma \ref{eq-NWO} that
\begin{align*}
\|b\|^{p}_{B^{p}_{d}(\mathbb{R}^n)}
&\approx\sum_{{\substack{Q\in\mathscr{D}\\\epsilon\not \equiv1}}}\left(\frac{|\langle b, h^{\epsilon}_{Q}\rangle ||Q|^{\frac{1}{2}}}{w(Q)^{\frac{1}{2}}(w^{-1}(Q))^{\frac{1}{2}}} \right)^p.
\end{align*}
The proof is complete.
\end{proof}

\begin{lem}\label{LemS5}
For $1<n<p<\infty$, suppose that $w\in A_2$, and $b\in B_{d}^{p}(\mathbb{R}^n)$. Let $\mathfrak{R}f:=\Pi_{\mathrm{III}f}b-\mathrm{III}(\Pi_{f}b)$, we have
\begin{align}\label{eqLemS5}
\|w^{\frac{1}{2}}\mathfrak{R}w^{-\frac{1}{2}}\|_{S^{p}(L^{2}(\mathbb{R}^{n}))}\lesssim\|b\|_{B_{d}^{p}(\mathbb{R}^n)}.
\end{align}
\end{lem}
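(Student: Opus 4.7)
\textbf{Proof plan for Lemma \ref{LemS5}.} The strategy is to expand $\mathfrak{R}$ into an explicit sum of rank-one operators whose coefficients are Haar coefficients of $b$, and then to invoke the weighted NWO bound \eqref{eq-NWO1} in the form already used for the $\Gamma^{\mathscr{D}}_b$ piece of \eqref{eqLem4-3}.

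The key first step is a clean Haar expansion of $\mathfrak{R}$. Using $\mathrm{III}\,h^{\eta}_R = h^{\sigma(\eta)}_{\sigma(R)}$ and Haar orthogonality (so that each $Q=\sigma(R)$, $\epsilon=\sigma(\eta)$ is hit by a unique preimage), both $\Pi^{\mathscr{D}}_{\mathrm{III}f}b$ and $\mathrm{III}(\Pi^{\mathscr{D}}_f b)$ can be written as a single sum over $(R,\eta)$ with $h^{\sigma(\eta)}_{\sigma(R)}$ as the outer Haar factor; the two sums differ only in that the average of $b$ is taken over $\sigma(R)$ in one and over $R$ in the other. Hence
\[
\mathfrak{R}f = \sum_{R,\,\eta\not\equiv 1} \bigl(\langle b\rangle_{\sigma(R)}-\langle b\rangle_R\bigr)\,\langle f, h^{\eta}_R\rangle\, h^{\sigma(\eta)}_{\sigma(R)}.
\]
Because $\sigma(R)$ is a child of $R$, property (6) of Lemma \ref{haar} collapses the telescoping sum for the average-difference to the single scale $R$, namely $\langle b\rangle_{\sigma(R)}-\langle b\rangle_R = \sum_{\epsilon'\not\equiv 1} \langle b, h^{\epsilon'}_R\rangle\, h^{\epsilon'}_R(\sigma(R))$, with $|h^{\epsilon'}_R(\sigma(R))|=|R|^{-1/2}$.

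Next I would fix one of the $(2^n-1)^2$ index pairs $(\eta,\epsilon')$ and insert the weight exactly as in the proof of \eqref{eqLem4-3}, writing the corresponding piece of $w^{1/2}\mathfrak{R}w^{-1/2}$ as $\sum_R \lambda_R\,\langle f,\tilde e_R\rangle\,\tilde g_R$ with
\[
\tilde e_R = \frac{|R|^{1/2}\,w^{-1/2} h^{\eta}_R}{w^{-1}(R)^{1/2}},\qquad \tilde g_R = \frac{|R|^{1/2}\,w^{1/2} h^{\sigma(\eta)}_{\sigma(R)}}{w(R)^{1/2}},
\]
\[
\lambda_R = \langle b, h^{\epsilon'}_R\rangle\, h^{\epsilon'}_R(\sigma(R))\,\frac{w(R)^{1/2}\,w^{-1}(R)^{1/2}}{|R|}.
\]
Applying Lemma \ref{reverse} to both $w$ and $w^{-1}\in A_2$, together with the doubling Lemma \ref{double} (to absorb $w(\sigma(R))\approx w(R)$ and the corresponding bound for $w^{-1}$), gives $\|\tilde e_R\|_r + \|\tilde g_R\|_r \lesssim |R|^{1/r-1/2}$ for $r = 2(1+\sigma_w)>2$, so both $\{\tilde e_R\}_{R\in\mathscr{D}}$ and $\{\tilde g_R\}_{R\in\mathscr{D}}$ are NWO sequences on $L^2(\mathbb R^n)$.

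Finally, \eqref{eq-NWO1} together with the $A_2$ identity $w(R)w^{-1}(R)\approx |R|^2$ yields
\[
\|w^{1/2}\mathfrak{R}w^{-1/2}\|_{S^p(L^2(\mathbb R^n))}^p \lesssim \sum_R |\lambda_R|^p \approx \sum_R \bigl(|\langle b, h^{\epsilon'}_R\rangle|\,|R|^{-1/2}\bigr)^p \lesssim \|b\|^p_{B^p_d(\mathbb R^n)},
\]
and summing over the finitely many pairs $(\eta,\epsilon')$ closes the argument. The main obstacle is the Haar bookkeeping in the first step: identifying the expansion of the $f$-dependent paraproduct $\Pi^{\mathscr{D}}_{\mathrm{III}f}b$ and recognising that $\langle b\rangle_{\sigma(R)}-\langle b\rangle_R$ is forced to live at the single scale $R$. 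Once that is in hand, the NWO/reverse Hölder/$A_2$ steps are a direct transcription of the $\Gamma^{\mathscr{D}}_b$ argument in Lemma \ref{LemS4}.
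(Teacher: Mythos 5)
Your proof follows essentially the same route as the paper: expand $\mathfrak{R}$ into the single sum $\sum_{Q,\epsilon}\langle f,h^{\epsilon}_Q\rangle(\langle b\rangle_{\sigma(Q)}-\langle b\rangle_Q)h^{\sigma(\epsilon)}_{\sigma(Q)}$, convert the average-difference into Haar coefficients of $b$ at scale $Q$ times $|Q|^{-1/2}$, insert $w^{1/2}$ and $w^{-1/2}$ to form NWO families via reverse H\"older, and close with \eqref{eq-NWO1} and the $A_2$ identity. The only (minor) difference is that you keep the full sum $\sum_{\epsilon'\not\equiv 1}\langle b,h^{\epsilon'}_Q\rangle h^{\epsilon'}_Q(\sigma(Q))$ for $\langle b\rangle_{\sigma(Q)}-\langle b\rangle_Q$ and sum over the finitely many $(\eta,\epsilon')$ pairs at the end, which is a cleaner bookkeeping than the paper's single-$\epsilon$ shorthand borrowed from \cite{HLW2017} but does not change the argument.
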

\begin{proof}
Computation gives
\begin{align*}
\mathfrak{R}f(x)&:=\Pi^{\mathscr{D}}_{\mathrm{III}f}b-\mathrm{III}(\Pi^{\mathscr{D}}_{f}b)\\
&=\sum_{P\in\mathscr{D},\eta\not\equiv 1}\langle \mathrm{III}f, h^{\eta}_{P}\rangle  { \langle b\rangle_{P}}h^{\eta}_{P}(x)-\sum_{Q\in\mathscr{D},\epsilon\not\equiv 1}\langle \Pi^{\mathscr{D}}_{f}b, h^{\epsilon}_{Q}\rangle h^{\sigma(\epsilon)}_{{ \sigma}(Q)}(x)
\\&=\sum_{Q\in\mathscr{D},\epsilon\not\equiv 1}\langle f, h^{\epsilon}_{Q}\rangle
{ (\langle b\rangle_{\sigma(Q)}-\langle b\rangle_{Q})}h^{\sigma(\epsilon)}_{\sigma(Q)}(x)
\\&=\sum_{Q\in\mathscr{D},\epsilon\not\equiv 1}\langle f, h^{\epsilon}_{Q}\rangle
{ \langle b,h^{\epsilon}_{Q}\rangle h^{\epsilon}_{Q}(\sigma(Q))}h^{\sigma(\epsilon)}_{\sigma(Q)}(x),
\end{align*} { where the last inequality form \cite[(2.2)]{HLW2017}}. Therefore, 
we have that
\begin{align*}
&~w^{\frac{1}{2}}(x)\mathfrak{R}(w^{-\frac{1}{2}}f)(x)
\\&=\sum_{Q\in\mathscr{D},\epsilon\not\equiv 1}\int_{\mathbb{R}^n} w^{-\frac{1}{2}}(y)f(y)h^{\epsilon}_{Q}(y)dy w^{\frac{1}{2}}(x){ \langle b,h^{\epsilon}_{Q}\rangle h^{\epsilon}_{Q}(\sigma(Q))}h^{\sigma(\epsilon)}_{{ \sigma}(Q)}(x)
\\&=\sum_{Q\in\mathscr{D},\epsilon\not\equiv 1}\int_{\mathbb{R}^n} w^{-\frac{1}{2}}(y)f(y)h^{\epsilon}_{Q}(y)dy w^{\frac{1}{2}}(x){ \langle b,h^{\epsilon}_{Q}\rangle h^{\epsilon}_{Q}(\sigma(Q))} h^{\sigma(\epsilon)}_{{ \sigma}(Q)}(x)
\\&= C_{[w]_{A_2}}\sum_{Q\in\mathscr{D},\epsilon\not\equiv 1}\frac{{ \langle b,h^{\epsilon}_{Q}\rangle h^{\epsilon}_{Q}(\sigma(Q))} w(Q)^{\frac{1}{2}}(w^{-1}(Q))^{\frac{1}{2}}}{|Q|}\cdot\frac{h^{\sigma(\epsilon)}_{{ \sigma}(Q)}(x)w^{\frac{1}{2}}(x)|Q|^{\frac{1}{2}}}{w(Q)^{\frac{1}{2}}}
\\&\qquad\quad\times\int_{\mathbb{R}^n} \frac{w^{-\frac{1}{2}}(y)h^{\epsilon}_{Q}(y)|Q|^{\frac{1}{2}}}{(w^{-1}(Q))^{\frac{1}{2}}}f(y)dy\\
&=: C_{[w]_{A_2}}\sum_{Q\in\mathscr{D},\epsilon\not\equiv 1}B^{1}(Q)\cdot G^{1}_{Q}(x)\int_{\mathbb{R}^n}f(y)H^{1}_{Q}(y)dy,
\end{align*}
where $$B^{1}(Q):=\frac{{ \langle b,h^{\epsilon}_{Q}\rangle h^{\epsilon}_{Q}(\sigma(Q))} w(Q)^{\frac{1}{2}}(w^{-1}(Q))^{\frac{1}{2}}}{|Q|},$$$$ G^{1}_{Q}(x):=\frac{h^{\sigma(\epsilon)}_{{ \sigma}(Q)}(x)w^{\frac{1}{2}}(x)|Q|^{\frac{1}{2}}}{w(Q)^{\frac{1}{2}}}
\quad\text{and}\quad H^{1}_{Q}(y):=\frac{w^{-\frac{1}{2}}(y)h^{\epsilon}_{Q}(y)|Q|^{\frac{1}{2}}}{(w^{-1}(Q))^{\frac{1}{2}}}.$$
By Lemma \ref{reverse}, similar to the proof of Proposition \ref{proS1}, we know that $\{G^{1}_{R}\}_{R\in \mathscr{D}}$ and $\{H^{1}_{Q}\}_{Q\in \mathscr{D}}$ are NWO sequences for $L^2(\mathbb{R}^n)$.
Therefore, by \eqref{eq-NWO1}, and $ { |h^{\epsilon}_{Q}(\sigma(Q))|\approx |Q|^{-\frac{1}{2}}}$ we get
\begin{align*}
\|w^{\frac{1}{2}}\mathfrak{R}w^{-\frac{1}{2}}\|_{S^{p}(L^{2}(\mathbb{R}^{n}))}\lesssim\|B^{1}(Q)\|_{\ell^p}
&{ \approx}\bigg(\sum_{{\substack{Q\in\mathscr{D}\\\epsilon\not\equiv1}}}\left(\frac{|{ \langle b, h^{\epsilon}_{Q}\rangle} ||Q|^{\frac{1}{2}}}{w(Q)^{\frac{1}{2}}(w^{-1}(Q))^{\frac{1}{2}}} \right)^p\bigg)^{\frac{1}{p}}\approx\|b\|_{B_{d}^{p}(\mathbb{R}^n)}.
\end{align*}
The proof is complete.
\end{proof}

\section{Proof of Theorem \ref{Athm1}: $0<p\leq n$}\label{Proof2}

In this section, we prove {\rm(2)} in Theorem \ref{Athm1}. That is, for $0 < p \le n$, the commutator $[b, R_{j}] \in S^{p}(L^{2}(w))$ if and only if $b$ is a constant. The sufficient
condition is obvious, since $[b, R_{j}] =0$ when $b$ is a constant. Thus, it suffices to show the necessary
condition. { By the inclusion $S^{p}(L^{2}(w)) \subset S^{q}(L^{2}(w))$ for $p < q$, then the proof of (2) in Theorem \ref{Athm1} can be proved on the basis of the following property.}

\begin{pro}\label{proS3}
Suppose { $n>1$}, $w\in A_2$, and $b\in {\rm{VMO}}(\mathbb{R}^n)$ with
$[b, R_{j}] \in S^{n}(L^{2}(w))$, then $b$ is a constant.
\end{pro}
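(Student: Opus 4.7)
My plan is to reduce the proposition to the vanishing of the critical Besov seminorm on $\mathbb{R}^n$, and then to conclude that $b$ must be constant by a mollification rigidity argument.

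The first step is to observe that the proof of Proposition~\ref{proS1} extends verbatim to the endpoint $p=n$: its only two inputs are Lemma~\ref{eq-NWO} (valid for all $1<p<\infty$) and Lemma~\ref{Sweight}, and both apply at $p=n$ since $n\ge 2$. Running the same argument at $p=n$ in every dyadic system $\mathscr{D}^\omega$ yields
\begin{equation*}
\sum_{\substack{Q \in \mathscr{D}^\omega \\ \epsilon \not\equiv 1}} \left(\frac{|\langle b, h_Q^\epsilon\rangle|\,|Q|^{1/2}}{w(Q)^{1/2}\,w^{-1}(Q)^{1/2}}\right)^{n} \lesssim \|[b,R_j]\|_{S^n(L^2(w))}^{n} < \infty.
\end{equation*}
From the $A_2$ hypothesis $w(Q)\,w^{-1}(Q)\approx |Q|^2$, the weights drop out and I obtain $\sum_Q |\langle b,h_Q^\epsilon\rangle|^n/|Q|^{n/2}<\infty$ for every $\omega$. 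I then invoke the $p=n$ analogue of Lemma~\ref{lemA1}: its proof relies only on Minkowski's inequality in $L^p$, which is valid for all $p\ge 1$, so the same argument extends to the endpoint. Summing the dyadic estimates over all $\omega\in\{0,\tfrac13,\tfrac23\}^n$ then produces
\begin{equation*}
\int_{\mathbb{R}^n}\int_{\mathbb{R}^n} \frac{|b(x) - b(y)|^n}{|x-y|^{2n}}\,dy\,dx < \infty.
\end{equation*}

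The final step is a rigidity argument showing that the finite critical Besov seminorm forces $b$ to be constant. Since $b\in\mathrm{VMO}\subset L^1_{\mathrm{loc}}$, I mollify to produce $b_\epsilon := b * \phi_\epsilon \in C^\infty$ for a standard mollifier $\phi_\epsilon$. Jensen's inequality applied to the convex function $t\mapsto |t|^n$, together with Fubini, gives
\begin{equation*}
\int_{\mathbb{R}^n}\int_{\mathbb{R}^n} \frac{|b_\epsilon(x) - b_\epsilon(y)|^n}{|x-y|^{2n}}\,dy\,dx \le \int_{\mathbb{R}^n}\int_{\mathbb{R}^n} \frac{|b(x) - b(y)|^n}{|x-y|^{2n}}\,dy\,dx < \infty.
\end{equation*}
If $\nabla b_\epsilon(x_0)\ne 0$ at some $x_0$, however, then $|b_\epsilon(x+h)-b_\epsilon(x)|\gtrsim |h|$ holds uniformly for $x$ in a small ball around $x_0$ and $h$ in a narrow cone around $\nabla b_\epsilon(x_0)/|\nabla b_\epsilon(x_0)|$; this leads to a divergent contribution of the form $\int_0^r t^{-1}\,dt=+\infty$ to the left-hand side, contradicting finiteness. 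Hence each $b_\epsilon$ is constant, and passing $\epsilon\to 0$ in $L^1_{\mathrm{loc}}$ shows that $b$ is constant almost everywhere.

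The main technical obstacle is verifying that Lemma~\ref{lemA1} remains valid at the critical exponent $p=n$; the Minkowski-based argument in the original proof goes through for all $p\ge 1$, so this is more bookkeeping than a genuine difficulty. Everything else---extending Proposition~\ref{proS1} to the endpoint and the mollification/rigidity step---uses only standard tools.
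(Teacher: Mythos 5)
Your argument is correct in substance but takes a genuinely different route from the paper's. The paper never passes through the continuous Besov space at $p=n$: it bounds the $\ell^n$ norm of the median-based mean-oscillation quantities $R_Q := |Q|^{-1}\int_Q |b-\langle b\rangle_{\hat Q}|\,dx$ by $\|[b,R_j]\|_{S^n(L^2(w))}$ via NWO sequences (exactly as in the proof of Proposition~\ref{proS1}), and then cites a pointwise lower bound from \cite{FLL2021} --- for smooth $b$ with $\nabla b(x_0)\neq 0$ one has $R_Q\gtrsim \ell(\tilde Q)|\nabla b(x_0)|$ for every dyadic $\tilde Q$ near $x_0$ --- which forces $\sum_Q R_Q^n$ to diverge. (In fact the paper's proof as written assumes $b\in C^\infty(\mathbb{R}^n)$, which is not among the hypotheses; your Jensen/mollification step is a cleaner way to reduce to the smooth case.) Your route instead extends the dyadic-to-continuous Besov comparison to the boundary exponent $p=n$ and then concludes from the rigidity of the critical Gagliardo seminorm, which is conceptually cleaner and more self-contained. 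The one soft spot is the claim that Lemma~\ref{lemA1} extends ``verbatim by Minkowski'': the Minkowski step, taken at face value, only yields the $\ell^1$ bound $\sum_P |a_P|\,|P|^{-1/2}$, not the $\ell^p$ bound that the lemma asserts. The estimate you actually need --- that $\sum_J |J|^{-1}\int_J |b-\langle b\rangle_J|^p\,dx$ is controlled by $\sum_P \big(|a_P|\,|P|^{-1/2}\big)^p$ --- does hold for every $p\ge 1$, in particular at $p=n$, but to see it one should telescope $b(x)-\langle b\rangle_J = \sum_{k\ge 0}\big(\langle b\rangle_{J_{k+1}(x)}-\langle b\rangle_{J_k(x)}\big)$ along the chain of dyadic cubes below $J$ containing $x$, apply H\"older against a geometric weight $2^{k\theta}$ with $0<\theta<n/p$, and then sum over $J$ and $k$. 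With that detail supplied your argument is complete and valid.
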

\begin{proof}
Similar to the proof in Section 3.1, { applying Lemma \ref{eq-NWO}, for $n>1$,} we know that
\begin{align}\label{eq-S8}
\bigg(\sum_{Q \in \mathscr{D}}\left(R_{Q} \right)^n\bigg)^{\frac{1}{n}} \lesssim  \|[b,R_{j}]\|_{S^{n}(L^{2}(w))},
\end{align}
where
\begin{align*}
R_{Q}:= \frac{1}{|Q|}\int_{Q}|b(x)-\langle b\rangle_{\hat{Q}}|dx.
\end{align*}
Here , $Q$ and $\hat{Q}$ are the dyadic cubes chosen in Section 3.1. If $b$ is not constant and $b\in C^{\infty}(\mathbb{R}^{n})$, it is clear
 that there exists a point $x_0\in \mathbb{R}^{n}$ such that $\nabla b(x_0)\neq0$. By applying \cite[Lemma 5.3]{FLL2021} with $\mathbb{R}^{n}$, then there is $\varepsilon>0$ and $N>0$ such that if $k>N$, then for any dyadic cube $\tilde{Q}\in \mathscr{D}_{k}$ with { $Q\subset \tilde{Q}$, $\hat{Q}\subset \tilde{Q}$}, and satisfying $|C(\tilde{Q})-x_{0}|<\varepsilon$,
\begin{align*}
R_{Q}\geq |\langle b\rangle_{Q}- \langle b\rangle_{\hat{Q}}|\geq C\ell(\tilde{Q})|\nabla b(x_0)|.
\end{align*}
Noting that for $k>N$, we obtain
\begin{align*}
\bigg(\sum_{Q \in \mathscr{D}}\left(R_{Q} \right)^n\bigg)^{\frac{1}{n}} =\infty.
\end{align*}
It is in contradiction with \eqref{eq-S8}. Therefore, the proposition holds.
\end{proof}

\section{Proof of Theorem \ref{Athm2}:  $p =n$}\label{Proof3}
\subsection{Proof of the sufficient condition}
In this subsection, we assume that $b\in \dot{W}^{1,n}(\mathbb{R}^{n})$, then prove that $$[b, R_{j}]\in S^{p,\infty}\left(L^{2}(w)\right).$$ By { Lemma \ref{WSweight}}, we just need to show that $\|w^{\frac{1}{2}}[b, R_{j}] (w^{-\frac{1}{2}})\|_{ S^{n,\infty}\left(L^{2}(\mathbb{R}^n)\right)}\lesssim \|b\|_{\dot{W}^{1,n}(\mathbb{R}^{n})}$.

Let $\Lambda=\{(x,y)\in \mathbb{R}^{n}\times \mathbb{R}^{n}:x=y\}$, and $\Omega=\{(x,y)\in\mathbb{R}^{n}\times \mathbb{R}^{n}\backslash \Lambda :x\neq y\}$. Assuming that $\mathscr{P}$ is a dyadic Whitney decomposition family of the open set $\Omega$, that is $\bigcup_{P\in\mathscr{P}}=\Omega$. Therefore, we write $K_{j}(x-y)=\Sigma_{P\in\mathscr{P}}K_{j}(x-y)\chi_P(x,y)$, and $P$ can be the cubes $P_{1}\times P_{2}$, where $P_{1},P_{2}\in \mathscr{D}$, have the same side length and that distance between them must be comparable to this sidelength. Thus, for each dyadic cube $P_1\in \mathscr{D}$, $P_{2}$ is related to $P_1$ and at most $M$ of the cubes $P_{2}$ such that $P_{1}\times P_{2}\in \mathscr{P}$.
Therefore, let $Q=P_{1}$, there is $R_{Q, s}$ such that $Q\times R_{Q, s}\in\mathscr{P}$, where $s=1,2,\cdot\cdot\cdot,M$, we can reorganize the sum
\begin{align*}
K_{j}(x-y)=\sum_{P\in\mathscr{P}}K_{j}(x-y)\chi_P(x,y)
=\sum_{Q\in\mathscr{D}}\sum_{s=1}^{M}K_{j}(x-y)\chi_{(Q\times R_{Q, s})}(x,y),
\end{align*}
where $|Q|=|R_{Q, s}|$ and $\mathrm{distance}(Q,R_{Q, s})\approx |Q|$.

Next, using the multiple Fourier series on $Q\times R_{Q, s}$ we can write,
\begin{eqnarray*}
K_{j}(x-y)\chi_{(Q\times R_{Q, s})}(x,y)
&=&\sum_{\vec{l}\in\mathbb{Z}^{2n}}c^{j}_{\vec{l}}e^{2\pi i\vec{l}'\cdot \widetilde{x}}e^{2\pi i\vec{l}''\cdot \widetilde{y}}\chi
_{Q}(x)\cdot \chi _{Q_{R,s}}(y),
\end{eqnarray*}
where $x_{i}=C_{Q}^{(i)}+\ell(Q)\tilde{x}_{i}$, $y_{i}=C_{R_{Q,s}}^{(i)}+\ell(R_{Q,s})\tilde{y}_{i}$, $i=1,2,\cdot\cdot\cdot,n$, and $\vec{l}=(\vec{l}', \vec{l}'')$ where $\vec{l}'=(l_1,l_2,\cdot\cdot\cdot,l_n)$, $\vec{l}''=(l_{n+1},l_{n+2},\cdot\cdot\cdot,l_{2n})$, and
\[
c^j_{\vec{l}}=\int_{R_{Q,s}}\int_{Q}K_{j}(x-y)\chi_{(Q\times R_{Q, s})}(x,y)e^{-2\pi i\vec{l}'\cdot \widetilde{x}
}e^{-2\pi i\vec{l}''\cdot \widetilde{y}}dxdy\frac{1}{|Q|}\frac{1}{|R_{Q,s}|}.
\]
For the multi-index $\alpha,~\beta\in \mathbb{Z}_+^{n}$, we apply $\hat{f}(\vec{l})=\frac{1}{(2\pi i\vec{l})^{\alpha}}\widehat{(\partial^{\alpha}f)}(\vec{l})$, and the size condition of $K_{j}(x-y)$, $$|\partial^{\alpha}_{x}\partial^{\beta}_{y}K_{j}(x-y)|\leq C(\alpha,\beta)\frac{1}{|x-y|^{n+|\alpha|+|\beta|}},$$
yield that
 \begin{align*}
 |c^j_{\vec{l}}|&\lesssim \frac{1}{(1+|\vec{l}|)^{|\alpha|+|\beta|}}\ell(Q)^{|\alpha|}\ell(R_{Q,s})^{|\beta|}\int_{R_{Q,s}}\int_{Q} |\partial^{\alpha}_{x}\partial^{\beta}_{y}K_{j}(x-y)|dxdy\frac{1}{|Q|}\frac{1}{|R_{Q,s}|}\\
 &\lesssim \frac{1}{(1+|\vec{l}|)^{|\alpha|+|\beta|}}\ell(Q)^{|\alpha|}\ell(R_{Q,s})^{|\beta|}\int_{R_{Q,s}}\int_{Q} \frac{1}{|x-y|^{n+|\alpha|+|\beta|}}dxdy\frac{1}{|Q|}\frac{1}{|R_{Q,s}|}\\
&\lesssim\frac{1}{|Q|}\frac{1}{(1+|\vec{l}|)^{|\alpha|+|\beta|}}.
\end{align*}
Let $\lambda^j_{\vec{l},Q}=|Q|^{\frac{1}{2}}|R_{Q,s}|^{\frac{1}{2}}c^j_{\vec{l}}$, then $$|\lambda^j_{\vec{l},Q}|\lesssim \frac{1}{(1+|\vec{l}|)^{|\alpha|+|\beta|}},$$ and
\begin{eqnarray*}
K_j(x-y)\chi _{(Q\times R_{Q, s})}(x,y)=\sum_{\vec{l}\in\mathbb{Z}^{2n}}\lambda^j_{\vec{l},Q}\frac{1}{|Q|^{1/2}}F_{\vec{l}',Q}(x)\frac{1}{|R_{Q,s}|^{1/2}}G_{\vec{l}'',R_{Q,s}}(y),
\end{eqnarray*}
where
$F_{\vec{l}',Q}(x)=e^{2\pi i\vec{l}'\cdot \widetilde{x}}\chi _{Q}(x)$ and $G_{\vec{l}'',R_{Q,s}}(y)=e^{2\pi i\vec{l}''\cdot \widetilde{y}}
\chi_{R_{Q,s}}(y)$. Then, we get
\begin{eqnarray*}
K(x-y)&=&\sum_{Q\in\mathscr{D}}\sum_{s=1}^{M}K_{j}(x-y)\chi_{(Q\times R_{Q, s})}(x,y)
\\&=&\sum_{Q\in\mathscr{D}}\sum_{s=1}^{M}\sum_{\vec{l}\in\mathbb{Z}^{2n}}\lambda^j_{\vec{l},Q}\frac{1}{|Q|^{1/2}}F_{\vec{l}',Q}(x)\frac{1}{|R_{Q,s}|^{1/2}}G_{\vec{l}'',R_{Q,s}}(y).
\end{eqnarray*}
Thus, the kernel of $w^{\frac{1}{2}}[b, R_{j}] (w^{-\frac{1}{2}})$ can be represented as
\begin{align*}
K^{w}_{b}(x,y)=\sum_{Q\in\mathscr{D}}\sum_{s=1}^{M}\sum_{\vec{l}\in\mathbb{Z}^{2n}}
(b(x)-b(y))\lambda^j_{\vec{l},Q}\frac{1}{|Q|^{1/2}}
w^{\frac{1}{2}}(x)F_{\vec{l}',Q}(x)\frac{1}{|R_{Q,s}|^{1/2}}
G_{\vec{l}'',R_{Q,s}}(y)w^{-\frac{1}{2}}(y).
\end{align*}
For each $Q$ we rewrite $b(x)-b(y)$ as $(b(x)-\langle b\rangle_{Q})+(\langle b\rangle_{Q}-b(y))$. We get
\begin{align*} K^{w}_{b}(x,y)&=C_{[w]_{A_2}}\sum_{Q\in\mathscr{D}}\sum_{s=1}^{M}\sum_{\vec{l}\in\mathbb{Z}^{2n}} \sum_{m=0}^{1} { \lambda^j_{\vec{l},Q}}\frac{\left(b(x)-b_{Q}\right)^{m}w^{\frac{1}{2}}(x) F_{\vec{l}',Q}(x)}
{[w(Q)]^{\frac{1}{2}}}\\&\qquad\times
 \frac{\left(b_{Q}-b(y)\right)^{1-m}w^{-\frac{1}{2}}(y) G_{\vec{l}'',R_{Q,s}}(y)}{{[w^{-1}(Q)]^{\frac{1}{2}}}}.
\end{align*}
There is a large constant $K>1$ such that $KQ$ contains $Q\cup R_{Q,s}$.
We use the following notation 
\begin{align*}
\operatorname{osc}_\alpha(b,Q)=\bigg[|Q|^{-1} \int_{KQ}|b(u)-\langle b\rangle_{Q}|^{\alpha} du\bigg]^{1/\alpha},
\end{align*}
where $\alpha>\frac{2(1+\sigma_{w})}{\sigma_{w}}$ with $\sigma_w$ the reverse doubling index.  Then
$$
F_{\vec{l}',Q,m}(x)=\left(\operatorname{osc}_\alpha(b,Q)\right)^{-m}\frac{\left(b(x)-\langle b\rangle_{Q}\right)^{m}w^{\frac{1}{2}}(x)F_{\vec{l}',Q}(x)}{[w(Q)]^{\frac{1}{2}}}
$$
and
$$
G_{\vec{l}'',R_{Q,s},m}(y)= \left(\operatorname{osc}_\alpha(b,Q)\right)^{-(1-m)}\frac{\left(\langle b\rangle_{Q}-b(y)\right)^{1-m}w^{-\frac{1}{2}}(y) G_{\vec{l}'',R_{Q,s}}(y)}{{[w^{-1}(Q)]^{\frac{1}{2}}}}.
$$
For $m=0,1$, let $\beta=\frac{2(1+\sigma_{w})\alpha}{\alpha+2(1+\sigma_{w})}$ and $p=\frac{\alpha}{\beta}$. It is clear that $\beta>2$, $p>1$, and $\beta p'=2(1+\sigma_{w})$. By the H\"{o}lder inequality and Lemma \ref{reverse}, and $\operatorname{supp}\left(F_{\vec{l}',Q}\right) \subset Q,~\left\|F_{\vec{l}',Q}\right\|_{\infty} \leqslant 1$ yield that
\begin{align*}
~\left\|F_{\vec{l}',Q,m}\right\|_{L^\beta(\mathbb{R})}
&\leq \left(\int_{\mathbb{R}^n}\bigg|\left(\operatorname{osc}_\alpha(b,Q)\right)^{-m}\frac{\left(b(x)-\langle b\rangle_{Q}\right)^{m}w^{\frac{1}{2}}(x) F_{\vec{l}',Q}(x)}{{[w(Q)]^{\frac{1}{2}}}}\bigg|^{\beta}dx\right)^{\frac{1}{\beta}}\\
&\leq [w(Q)]^{-\frac{1}{2}}(\operatorname{osc}_\alpha(b,Q))^{-m}\bigg(\frac{1}{|Q|}\int_{Q}|
b(x)-\langle b\rangle_{Q}|^{\beta m}w^{\frac{\beta}{2}}(x)dx\bigg)^{\frac{1}{\beta}}|Q|^{\frac{1}{\beta}}\\
&\leq [w(Q)]^{-\frac{1}{2}}(\operatorname{osc}_\alpha(b,Q))^{-m}\bigg(\frac{1}{|Q|}\int_{Q}|
b(x)-\langle b\rangle_{Q}|^{p\beta m}dx\bigg)^{\frac{1}{p\beta}}\bigg( \frac{1}{|Q|}\int_{Q} w^{\frac{p'\beta}{2}}(x)dx\bigg)^{\frac{1}{p'\beta}}|Q|^{\frac{1}{\beta}}\\
&\leq [w(Q)]^{-\frac{1}{2}}(\operatorname{osc}_\alpha(b,Q))^{-m}\bigg(\frac{1}{|Q|}\int_{KQ}|
b(x)-\langle b\rangle_{Q}|^{\alpha}dx\bigg)^{\frac{m}{\alpha}}\bigg( \frac{1}{|Q|}\int_{Q} w^{1+\sigma_w}(x)dx\bigg)^{\frac{1}{2(1+\sigma_w)}}|Q|^{\frac{1}{\beta}}\\
&\leq C_{[w]_{A_2}}|Q|^{\frac{1}{\beta}-\frac{1}{2}}.
\end{align*}
We now consider $G_{\vec{l}'',R_{Q,s},m}$ and use a method similar to the proof above. Let $\beta=\frac{2(1+\sigma_{w})\alpha}{\alpha+2(1+\sigma_{w})}$ and $p=\frac{\alpha}{\beta}$, we have
\begin{align*}
&\left\|G_{\vec{l}'',R_{Q,s},m}\right\|_{L^{\beta}(\mathbb{R})}
\\&\leq \left(\int_{\mathbb{R}^n}\bigg|\left(\operatorname{osc}_\alpha(b,Q)\right)^{-(1-m)}\frac{\left(\langle b\rangle_{Q}-b(y)\right)^{1-m}w^{-\frac{1}{2}}(y)G_{\vec{l}'',R_{Q,s}}(y)}{[w^{-1}(Q)]^{\frac{1}{2}}}\bigg|^{\beta}dy\right)^{\frac{1}{\beta}}\\
&\leq [w^{-1}(Q)]^{-\frac{1}{2}}(\operatorname{osc}_\alpha(b,Q))^{-(1-m)}\bigg(\int_{R_{Q,s}}|
b(x)-\langle b\rangle_{Q}|^{\beta(1-m)}w^{-\frac{\beta}{2}}(y)dx\bigg)^{\frac{1}{\beta}}\\
&\leq [w^{-1}(Q)]^{-\frac{1}{2}}|Q|^{\frac{1}{\beta}}(\operatorname{osc}_\alpha(b,Q))^{-(1-m)}\bigg(\frac{1}{|Q|}\int_{R_{Q,s}}|
b(x)-\langle b\rangle_{Q}|^{p(1-m)\beta}dx\bigg)^{\frac{1}{p\beta}}\bigg(\frac{1}{|Q|}\int_{{Q}}
w^{-\frac{p'\beta}{2}}(y)dy\bigg)^{\frac{1}{p'\beta}}\\
&\leq [w^{-1}(Q)]^{-\frac{1}{2}}|Q|^{\frac{1}{\beta}}(\operatorname{osc}_\alpha(b,Q))^{-(1-m)}\bigg(\frac{1}{|Q|}\int_{KQ}|
b(x)-\langle b\rangle_{Q}|^{\alpha}dx\bigg)^{\frac{1-m}{\alpha}}\bigg(\frac{1}{|Q|}\int_{{Q}}
w^{-(1+\sigma_w)}(y)dy\bigg)^{\frac{1}{2(1+\sigma_{w})}}\\
&\leq [w^{-1}(Q)]^{-\frac{1}{2}}|Q|^{\frac{1}{\beta}}\bigg(\frac{K}{|KQ|}\int_{{KQ}}
w^{-(1+\sigma_w)}(y)dy\bigg)^{\frac{1}{2(1+\sigma_{w})}}\\
&\leq [w^{-1}(Q)]^{-\frac{1}{2}}|Q|^{\frac{1}{\beta}}K^{\frac{1}{2(1+\sigma_{w})}}
\left(\frac{w^{-1}(KQ)}{|KQ|}\right)^{\frac{1}{2}}\\
&\leq C({[w]_{A_2}},K)|Q|^{\frac{1}{\beta}-\frac{1}{2}},
\end{align*} where the last inequality comes from Lemma \ref{double}.
Therefore,
$$
w^{\frac{1}{2}}[b, R_{j}] (w^{-\frac{1}{2}})=C_{[w]_{A_2}} \sum_{\vec{l} \in \mathbb{Z}^{2n}} \sum_{m=0}^{1}\sum_{s=1}^{M}\sum_{Q \in \mathscr{D}} \lambda_{Q, \vec{l}}\operatorname{osc}_\alpha(b,Q)\left\langle f, G_{\vec{l}'',R_{Q,s},m}\right\rangle F_{\vec{l}',Q,m},
$$
where $\{G_{Q, \vec{l}'',m}\}$ and $\{F_{Q, \vec{l}',m}\}$ are NWO sequences and numbers $\left\{\lambda_{Q, \vec{l}}\right\}$ which satisfy $\left|\lambda_{Q, \vec{l}}\right| \lesssim \frac{1}{(1+|\vec{l}|)^{|\alpha|+|\beta|}}$ for all multi-index $\alpha, ~\beta\in\mathbb{Z}_{+}^{n}$. Thus, by Lemma \ref{eq-NWO1}, we have
\begin{align*}
\|w^{\frac{1}{2}}[b, R_{j}] (w^{-\frac{1}{2}})\|_{ S^{n,\infty}\left(L^{2}(\mathbb{R}^n)\right)}\lesssim \|\operatorname{osc}_\alpha(b,Q)\|_{\ell^{n,\infty}}.
\end{align*}
 By \cite[Theorem 1 and Remark (d)]{F2022} (see also \cite{RS1989}), we know that $\operatorname{osc}_\alpha(b,Q)\in \ell^{n,\infty}$ follows from $b\in \dot{W}^{1,n}(\mathbb{R}^{n})$, Then $ w^{\frac{1}{2}}[b, R_{j}] (w^{-\frac{1}{2}})\in S^{n,\infty}\left(L^{2}(\mathbb{R}^n)\right)$. Hence, we are done with the proof of the sufficient condition in Theorem \ref{Athm2}.

\subsection{Proof of the necessary condition}
In this subsection, we use the idea of \cite{RS1989}. We assume that $[b, R_{j}]\in S^{n,\infty}\left(L^{2}(w)\right)$, then prove that $b\in \dot{W}^{1,n}(\mathbb{R}^n)$.

 First, choosing two cubes $Q$ and $\hat{Q}$ in $\mathscr{D}$, as Lemma \ref{lemA4}. Define
\begin{align*}
J_{Q}(x,y)=|Q|^{-2}K^{-1}_{j}(x-y)\chi_{Q}(x)\chi_{\hat{Q}}(y).
\end{align*}
For $K_{j}(x-y)^{-1}$, applying the multiple Fourier series on $Q\times\hat{Q}$, we can write
\begin{align*}
K^{-1}_{j}(x-y)=\sum_{\vec{l}\in\mathbb{Z}^{2n}}c^{j}_{\vec{l}}e^{2\pi i\vec{l}'\cdot \widetilde{x}}e^{2\pi i\vec{l}''\cdot \widetilde{y}}\chi
_{Q}(x)\cdot \chi _{\hat{Q}}(y),
\end{align*}
where $x_{i}=C_{Q}^{(i)}+\ell(Q)\tilde{x}_{i}$, $y_{i}=C_{\hat{Q}}^{(i)}+\ell(\hat{Q})\tilde{y}_{i}$, $i=1,2,\cdot\cdot\cdot,n$, and $\vec{l}=(\vec{l}', \vec{l}'')$ where $\vec{l}'=(l_1,l_2,\cdot\cdot\cdot,l_n)$, $\vec{l}''=(l_{n+1},l_{n+2},\cdot\cdot\cdot,l_{2n})$, and
\[
c^j_{\vec{l}}=\int_{\hat{Q}}\int_{Q}K^{-1}_{j}(x-y)\chi_{(Q\times \hat{Q})}e^{-2\pi i\vec{l}'\cdot \widetilde{x}
}e^{-2\pi i\vec{l}''\cdot \widetilde{y}}dxdy\frac{1}{|Q|}\frac{1}{|\hat{Q}|}.
\]
Similar to the estimate in the previous section, using $$|\partial^{\alpha}_{x}\partial^{\beta}_{y}K^{-1}_{j}(x-y)|\leq C(\alpha,\beta){|x-y|^{n-|\alpha|-|\beta|}},$$
$|Q|=|\hat{Q}|$ and $\mathrm{distance}(Q,\hat{Q})\approx |Q|$ yields
 \begin{align*}
 |c^j_{\vec{l}}|&\lesssim \frac{1}{(1+|\vec{l}|)^{|\alpha|+|\beta|}}\ell(Q)^{|\alpha|}\ell(\hat{Q})^{|\beta|}\int_{\hat{Q}}\int_{Q} |\partial^{\alpha}_{x}\partial^{\beta}_{y}K^{-1}_{j}(x-y)|dxdy\frac{1}{|Q|}\frac{1}{|\hat{Q}|}\\
 &\lesssim \frac{1}{(1+|\vec{l}|)^{|\alpha|+|\beta|}}\ell(Q)^{|\alpha|}\ell(\hat{Q})^{|\beta|}\int_{\hat{Q}}\int_{Q} {|x-y|^{n-|\alpha|-|\beta|}}dxdy\frac{1}{|Q|}\frac{1}{|\hat{Q}|}\\
&\lesssim{|Q|}\frac{1}{(1+|\vec{l}|)^{|\alpha|+|\beta|}}.
\end{align*}
where $\alpha,~\beta\in \mathbb{Z}_+^{n}$ are multi-index.
Therefore, we can denote $\lambda^j_{\vec{l},Q}=\frac{1}{|Q|}c^j_{\vec{l}}$, then $$|\lambda^j_{\vec{l},Q}|\lesssim \frac{1}{(1+|l|)^{|\alpha|+|\beta|}}.$$ Obviously,
\begin{eqnarray*}
J_{Q}(x,y)=\sum_{\vec{l}\in\mathbb{Z}^{2n}}\lambda^j_{\vec{l},Q}\frac{1}{|Q|^{1/2}}
F_{\vec{l}',Q}(x)\frac{1}{|\hat{Q}|^{1/2}}G_{\vec{l}'',\hat{Q}}(y),
\end{eqnarray*}
where
$F_{\vec{l}',Q}(x)=e^{2\pi i\vec{l}'\cdot \widetilde{x}}\chi _{Q}(x)$ and $G_{\vec{l}'',\hat{Q}}(y)=e^{2\pi i\vec{l}''\cdot \widetilde{y}}
\chi_{\hat{Q}}(y)$.

Next, set an arbitrary function { $\varepsilon:\mathbb{R}^n\rightarrow \{-1,1\}$}. Define the operator $L_{Q}$ as
$$w^{\frac{1}{2}}(x)L_{Q}(w^{-\frac{1}{2}}f)(x)=\int_{\mathbb{R}^n}w^{\frac{1}{2}}(x)
\varepsilon_{Q}(x)J_{Q}(x,y)w^{-\frac{1}{2}}(y)f(y)dy,$$
{ where supp$\varepsilon_{Q}\subset Q$}.
Considering an arbitrary sequence $\{a_{Q}\}_{Q\in\mathscr{D}}\in \ell^{\frac{n}{n-1},1}$. Here $\ell^{\frac{n}{n-1},1}$ is the Lorentz sequence space defined as
the set of all sequences $\{a_{Q}\}_{Q\in\mathscr{D}}$ such that
$$ \big\|\{a_{Q}\}_{Q\in\mathscr{D}}\big\|_{\ell^{\frac{n}{n-1},1}} =\sum_{k=1}^\infty k^{\frac{n}{n-1}  -1}a^*_k, $$
where the sequence $\{a^*_k\}$ is  the sequence $\{ |a_Q|\}$ rearranged in a decreasing order.

Define the operator $L$ as
\begin{align*}w^{\frac{1}{2}}(x)L(w^{-\frac{1}{2}}f)(x)
&=\sum_{Q\in\mathscr{D}}a_{Q}
w^{\frac{1}{2}}(x)L_{Q}(w^{-\frac{1}{2}}f)(x).
\end{align*}
Therefore, we also write
\begin{align*}
w^{\frac{1}{2}}(x)L(w^{-\frac{1}{2}}f)(x)=C_{[w]_{A_2}}
\sum_{Q\in\mathscr{D}}\sum_{\vec{l}\in\mathbb{Z}^{2n}}\lambda^j_{\vec{l},Q}a_{Q}\langle f,\tilde{G}_{\vec{l}'',\hat{Q}}   \rangle \tilde{F}_{\vec{l}',Q}(x),
\end{align*}
where
$$   \tilde{G}_{\vec{l}'',\hat{Q}}(y)=\frac{G_{\vec{l}'',\hat{Q}}(y)w^{-\frac{1}{2}}(y)}{(w^{-1}(Q))^{\frac{1}{2}}}    \quad\text{and}\quad   \tilde{F}_{\vec{l}',Q}(x)= \frac{F_{\vec{l}',Q}(x)w^{\frac{1}{2}}(x)}{(w(Q))^{\frac{1}{2}}}.$$
By Lemma \ref{reverse}, similar to the proof of Section 3, they are NWO sequences. Thus, applying { Lemma \ref{WSweight}} to give
\begin{align*}
\|L\|_{S^{\frac{n}{n-1},1}(L^{2}(w))}
&=\|w^{\frac{1}{2}}Lw^{-\frac{1}{2}}\|_{S^{\frac{n}{n-1},1}(L^{2}(\mathbb{R}^{n}))}\leq\|a_{Q}\|_{\ell^{\frac{n}{n-1},1}}.
\end{align*}

{ Using the idea of \cite [p.262]{RS1989}}, we also can obtain
\begin{align*}
\operatorname{Trace}(w^{\frac{1}{2}}[b,R_j]L_{Q}(w^{-\frac{1}{2}}))=|Q|^{-2}\int_{Q}\int_{\hat{Q}}(b(x)-b(y))\varepsilon_{Q}(x)dydx.
\end{align*}
Note that $|b-\langle b\rangle_{Q}|\leq |b-\langle b\rangle_{\hat{Q}}|+\frac{1}{|Q|}\int_{Q}|b(x)-\langle b\rangle_{\hat{Q}}|dx$, and by the definition of $\varepsilon_{Q}$, we have
\begin{align*}
\operatorname{Trace}(w^{\frac{1}{2}}[b,R_j]L_{Q}(w^{\frac{1}{2}}))&\gtrsim \frac{1}{|Q|}\int_{Q}|b(x)-\langle b\rangle_{\hat{Q}}|dx\\
&\gtrsim\frac{1}{|Q|}\int_{Q}|b(x)-\langle b\rangle_{Q}|dx\\
&=:M(b,Q).
\end{align*}
Therefore, by duality, there exists a sequence $\{a_Q\}_{Q\in\mathscr{D}}$ with $\|a_{Q}\|_{\ell^{\frac{n}{n-1},1}}\leq1$ such that
\begin{align*}
\|b\|_{\dot{W}^{1,n}(\mathbb{R}^n)}&\lesssim\|M(b,Q)\|_{\ell^{n,\infty}}\\
&\lesssim\|\operatorname{Trace}(w^{\frac{1}{2}}[b,R_j]L_{Q}(w^{-\frac{1}{2}}))\|_{\ell^{n,\infty}}\\
&=\sup_{\|a_{Q}\|_{\ell^{\frac{n}{n-1},1}}\leq1}\sum_{Q\in \mathscr{D}}\operatorname{Trace}(w^{\frac{1}{2}}[b,R_j]L_{Q}(w^{-\frac{1}{2}}))\cdot a_{Q}\\
&=\sup_{\|a_{Q}\|_{\ell^{\frac{n}{n-1},1}}\leq1}\operatorname{Trace}(w^{\frac{1}{2}}[b,R_j]L(w^{-\frac{1}{2}}))\\
&\lesssim\sup_{\|a_{Q}\|_{\ell^{\frac{n}{n-1},1}}\leq1}\|w^{\frac{1}{2}}[b,R_j](w^{-\frac{1}{2}})\|_{S^{n,\infty}(L^{2}(\mathbb{R}^{n}))}\|w^{\frac{1}{2}}L(w^{-\frac{1}{2}})\|_{S^{\frac{n}{n-1},1}(L^{2}(\mathbb{R}^{n}))}
\\&\lesssim\|[b,R_j]\|_{S^{n,\infty}(L^{2}(w))},
\end{align*}
where the first inequality comes from \cite[Theorem 1 and Remark (d)]{F2022} (see also \cite{RS1989}).
Hence, the proof of the necessary condition in Theorem \ref{Athm2} is complete.

{ \section{Further Discussions on one dimensional setting}\label{one dimensional}
In Theorem \ref{Athm1} and Theorem \ref{Athm2}, we mainly consider the commutator with Riersz transform $R_j$ $(j=1,2,\cdots,n)$ in higher dimensions on $\mathbb R^{n}$. 
When $n=1$, Peller \cite{P2003} obtained the follow result in the unweighted case,
\begin{Theorem}[\cite{P2003}] For $b\in {\rm VMO}(\mathbb{R})$, and $0<p<\infty$, we have 
	\begin{align*}
		\|[b,H]\|_{S^{p}(L^2(\mathbb{R}))}\approx\|b\|_{B_{1/p}^{p,p}(\mathbb{R})}.
	\end{align*}
\end{Theorem}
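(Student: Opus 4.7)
The plan is to adapt the dyadic framework of Sections~\ref{Proof1}--\ref{Proof3} to the one-dimensional unweighted setting ($w\equiv 1$), with the crucial caveat that Schatten $S^p$ is only a quasi-Banach ideal when $0<p\leq 1$. First, invoke Petermichl's representation of the Hilbert transform as an average of dyadic shifts $\mathrm{III}$ attached to shifted dyadic systems $\mathscr{D}^{\omega}$. By the quasi-triangle inequality in $S^p$ (valid in both regimes), it suffices to prove $\|[b,\mathrm{III}]\|_{S^p}\approx\|b\|_{B_{d}^{p}(\mathbb{R},\mathscr{D}^{\omega})}$ on each fixed grid and then sum across grids using the one-dimensional analogue of Lemma~\ref{lemA1} to recover $\|b\|_{B_{1/p}^{p,p}(\mathbb{R})}$.

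For the upper bound, decompose $[b,\mathrm{III}]$ into Haar paraproducts $\Pi_b^{\mathscr{D}}$, $\Pi_b^{*\mathscr{D}}$, $\Gamma_b^{\mathscr{D}}$ plus the remainder $\mathfrak{R}$, exactly as in Lemmas~\ref{LemS4}--\ref{LemS5} with $w\equiv 1$. Each paraproduct admits a Schmidt-type expansion $T=\sum_{Q}\lambda_{Q}\langle\cdot,e_{Q}\rangle f_{Q}$ where $|\lambda_{Q}|\lesssim|\langle b,h_{Q}^{\varepsilon}\rangle|\,|Q|^{-1/2}$ and $\{e_{Q}\},\{f_{Q}\}$ are NWO sequences built from Haar functions and normalized indicators. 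For $1<p<\infty$, inequality~\eqref{eq-NWO1} gives $\|T\|_{S^p}\lesssim\|\lambda_{Q}\|_{\ell^{p}}=\|b\|_{B_{d}^{p}}$. For $0<p\leq 1$, each summand is rank-one with $S^p$-quasinorm $|\lambda_{Q}|\,\|e_{Q}\|_{L^{2}}\|f_{Q}\|_{L^{2}}\lesssim|\lambda_{Q}|$, and the $p$-triangle inequality $\|\sum_{k}T_{k}\|_{S^p}^{p}\leq\sum_{k}\|T_{k}\|_{S^p}^{p}$, valid for Schatten quasinorms, delivers the same bound.

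For the lower bound, I would mimic Proposition~\ref{proS1} with $w\equiv 1$: expanding $|\langle b,h_{Q}^{\varepsilon}\rangle|$ about the median $m_{b}(\hat{Q})$ of $b$ on the paired dyadic cube $\hat{Q}$ supplied by Lemma~\ref{lemA4}, one splits the integral into pieces on which both the kernel $K(x-y)=1/(x-y)$ and the difference $b(x)-b(y)$ have a common sign, which identifies $|\langle b,h_{Q}^{\varepsilon}\rangle|\,|Q|^{-1/2}$ with a bilinear pairing $\langle[b,H]G_{\hat{Q}}^{s},H_{Q}^{s}\rangle$ against NWO sequences. Summing $p$-th powers via the trace-type inequality $\sum_{Q}|\langle T G_{Q},H_{Q}\rangle|^{p}\lesssim\|T\|_{S^p}^{p}$ then yields $\|b\|_{B_{d}^{p}}\lesssim\|[b,H]\|_{S^p}$; for $1<p<\infty$ this is Lemma~\ref{eq-NWO}.

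The principal obstacle is the regime $0<p\leq 1$, where Lemma~\ref{eq-NWO} breaks down since its proof rests on Schatten duality, which degenerates for quasinorms. My plan is to replace it with Peller's Hankel-operator route: via the Cayley transform $\mathbb{R}\to\mathbb{T}$, recast $[b,H]$ as a Hankel operator $H_{\phi}$ whose symbol has comparable Besov norm, and invoke the classical Schatten characterization $H_{\phi}\in S^{p}\Longleftrightarrow \phi\in B_{1/p}^{p,p}$, known for the full range $0<p<\infty$; alternatively, develop an atomic/wavelet decomposition of $B_{1/p}^{p,p}$ whose atoms are supported on dyadic intervals and match each atom to a controlled singular value of $[b,H]$ through a greedy selection scheme. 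Executing either route within the paper's dyadic language is the main technical effort.
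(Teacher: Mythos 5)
First, a point of orientation: the paper does not prove this statement at all --- it is quoted from Peller \cite{P2003} in Section \ref{one dimensional}, and the authors explicitly say that their dyadic method only resolves the weighted analogue for $1<p<\infty$ (question (i)), leaving $0<p\le 1$ open (question (ii)). For the range $1<p<\infty$ your plan is essentially the paper's own route specialised to $n=1$, $w\equiv 1$: Petermichl's shift representation, the paraproduct decomposition of Lemmas \ref{LemS4}--\ref{LemS5}, the median/sign argument of Proposition \ref{proS1} for the lower bound, and the grid-intersection Lemma \ref{lemA1}. That part is fine.

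The genuine gap is the regime $0<p\le 1$, which is exactly the part of Peller's theorem that the paper's machinery cannot reach, and your proposal does not repair this. Two concrete failures: (a) your fallback on ``the classical Schatten characterization $H_\phi\in S^p\Longleftrightarrow\phi\in B^{p,p}_{1/p}$, known for the full range'' is circular --- after the Cayley transform that characterization \emph{is} the theorem being attributed to \cite{P2003}, so nothing is proved; the alternative ``atomic decomposition with a greedy selection scheme'' is not an argument, only a wish. (b) The dyadic bound you do carry out is quantitatively empty for $p\le 1$: the rank-one/$p$-triangle estimate gives $\|[b,\mathrm{III}]\|_{S^p}^p\lesssim\sum_{Q}\bigl(|\langle b,h_Q^{\epsilon}\rangle|\,|Q|^{-1/2}\bigr)^p$, but for $p\le 1$ this first-order Haar quantity is infinite for every non-constant smooth $b$ (at scale $2^{-k}$ in $\mathbb{R}$ the contribution is comparable to $\|b'\|_{L^1}^p$ uniformly in $k$, so the sum over scales diverges), and it is certainly not comparable to $\|b\|_{B^{p,p}_{1/p}}$; this is the same degeneration that forces the restriction $n<p$ in Lemma \ref{lemA1} and produces the cut-off in Theorem \ref{Athm1}(2). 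Indeed, for $p\le 1$ the space $B^{p,p}_{1/p}(\mathbb{R})$ can no longer be described by first differences or by Haar coefficients (one vanishing moment is not enough to detect smoothness $1/p\ge 1$), so any proof must use higher-order differences, smoother wavelets, or polynomial mean oscillation --- none of which appear in your outline. A further, smaller, issue: for $p<1$ the passage from the shifts back to $H$ is an average over a continuum of grids, and $\bigl\|\int T_\omega\,d\mu(\omega)\bigr\|_{S^p}$ is not controlled by $\int\|T_\omega\|_{S^p}\,d\mu(\omega)$ for a quasi-norm, so even the reduction to a single dyadic shift needs a different (discrete, absolutely $p$-summable) representation.
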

For $p=2$, Lacey and the last two authors in \cite{LLW2022} considered that Schatten classes and the commutator $[b,H]$ in the two weight setting(see also Theorem A in Section \ref{Introduction}). In \cite[Section 7]{LLW2022}, the authors raised two questions about one weight in one dimension.

{\rm(i)} For $b\in {\rm VMO}(\mathbb{R})$, and $1<p<\infty$. Is
$
	\|[b,H]\|_{S^{p}(L^2(w))}\approx\|b\|_{B_{1/p}^{p,p}(\mathbb{R})}
$ established?

{\rm(ii)} Can the above conclusion be extended to $0<p\le1$?
\\
Similar to the proof of {\rm(1)} in Theorem \ref{Athm1}, we can give a positive answer to problem {\rm(i)}. However, we can't come up with a good way to solve problem {\rm(ii)} in this paper. So, this remains an open problem.

}

\section{An Application: The Quantised Derivative}\label{application}

Let $n>1$ be an integer, and let $x_1, x_2, \ldots, x_n$ be the coordinates of $\mathbb{R}^n$.
%
%
For $j=1, \ldots, n$, we define $D_j$ to be the derivative in the direction $x_j$,
$$
D_j=\frac{1}{i} \frac{\partial}{\partial x_j}=-i \partial_j.
$$
When $f \in L^{\infty}\left(\mathbb{R}^n\right)$ is not a smooth function then $D_j f$ denotes the distributional derivative of $f$.
We also consider $D_j$ as a self-adjoint operator on $L^2\left(\mathbb{R}^n\right)$ with its standard domain of square integrable functions with a square integrable weak derivative in the direction $x_j$.
This is equivalent to the closure of the symmetric operator $D_j$ restricted to Schwartz functions.
We use the notation $\nabla f=i\left(D_1 f, D_2 f, \ldots, D_n f\right)$ for an essentially bounded function $f \in L^{\infty}\left(\mathbb{R}^n\right)$.
For a square integrable function $f$ with a square integrable derivative in each direction we consider $\nabla$ as an unbounded operator from $L^2\left(\mathbb{R}^n\right)$ to the Bochner space $L^2\left(\mathbb{R}^n, \mathbb{C}^n\right)$.

Let $N=2^{\lfloor n / 2\rfloor}$.
We use $n$-dimensional Euclidean gamma matrices, which are $N \times N$ self-adjoint complex matrices $\gamma_1, \ldots, \gamma_n$ satisfying the anticommutation relation,
$$
\gamma_j \gamma_k+\gamma_k \gamma_j=2 \delta_{j, k}, \quad 1 \leq j, k \leq n,
$$
where $\delta$ is the Kronecker delta.
The precise choice of matrices satisfying this relation is unimportant so we assume that a choice is fixed for the rest of this paper.

Using this choice of gamma matrices, we can define the $n$-dimensional Dirac operator,
$$
\mathcal{D}=\sum_{j=1}^n \gamma_j \otimes D_j.
$$
This is a linear operator on the Hilbert space $\mathbb{C}^N \otimes L^2\left(\mathbb{R}^n\right)$ initially defined with dense domain $\mathbb{C}^N \otimes \mathcal{S}\left(\mathbb{R}^n\right)$, where $\mathcal{S}\left(\mathbb{R}^n\right)$ is the Schwartz space of functions on $\mathbb{R}^n$.
It is easily seen that $\mathcal{D}$ is symmetric on this domain. Taking the closure we obtain a self-adjoint operator which we also denote by $\mathcal{D}$.
We then define the $\operatorname{sign}$ of $\mathcal{D}$ as the operator $\operatorname{sgn}(\mathcal{D})$ via the Borel functional calculus, i.e.,
$\operatorname{sgn}(\mathcal{D}) = {\mathcal{D}\over |\mathcal{D}|}.$

Given $f \in L^{\infty}\left(\mathbb{R}^n\right)$, denote by $M_f$ the operator of pointwise multiplication by $f$ on the Hilbert space $L^2\left(\mathbb{R}^n\right)$.
The operator $1 \otimes M_f$ is a bounded linear operator on $\mathbb{C}^N \otimes L^2\left(\mathbb{R}^n\right)$, where $1$ denotes the identity operator on $\mathbb{C}^N$. The commutator,
$$
\bar{d} f:=i\left[\operatorname{sgn}(\mathcal{D}), 1 \otimes M_f\right]
$$
denotes the quantised derivative of Alain Connes introduced in \cite[$\mathrm{IV}$]{C1994}.
It is of particular interest in the quantised calculus to determine conditions on $f$ such that $\bar{d} f \in S^{n, \infty}(\mathbb{C}^N \otimes L^2(\mathbb{R}^n) )$.
The asymptotic behaviour of the singular values of the quantised derivative denote the dimension of the infinitesimal in the quantised calculus.
That the sequence of singular values belongs to the weak space $\ell^{n, \infty}$ when the dimension of the Euclidean space is $n$ indicates analogous behaviour between quantum derivatives and differential forms.
Specifically, a product of $n$ derivatives lies in the space $S^{1, \infty}(\mathbb{C}^N \otimes L^2\left(\mathbb{R}^n\right))$, which is the only weak space admitting a non-trivial trace that acts as the integral.

In one dimension, necessary and sufficient conditions on $f \in L^{\infty}(\mathbb{R})$ such that $\left[\operatorname{sgn}(-i \frac{d}{d x}), M_f\right] \in S^{p, q}(\mathbb{C}^N \otimes L^2\left(\mathbb{R}\right))$ where $p, q \in(0, \infty]$ are provided by Peller in \cite[Chapter 4 , Theorem 4.4]{P2003}.  Janson and Wolff \cite{JW1982}, and Connes, Sullivan and Teleman \cite{CST1994} have studied necessary and sufficient conditions for $\bar{d} f \in S^{p, q}(\mathbb{C}^N \otimes L^2\left(\mathbb{R}^n\right))$ with $p, q \in(0, \infty]$ in the higher dimensional case $n>1$.  The case of $p=q$ was studied by Janson and Wolff in their paper \cite{JW1982}.  They proved that when $p>n$, a necessary and sufficient condition for $\bar{d} f\in S^p(\mathbb{C}^N \otimes L^2\left(\mathbb{R}^n\right))$ is that $f$ is in the Besov space $B^{p, p}_{n / p}\left(\mathbb{R}^n\right)$.
They also show that if $p \leq n$, then $\bar{d} f \in S^p$ if and only if $f$ is a constant.

The case of $p \neq q$ with $p \in[1, \infty)$ and $q \in[1, \infty]$ was answered by Rochberg and Semmes in \cite[Corollary 2.8, Theorem 3.4]{RS1989}.
Necessary and sufficient conditions on $f \in$ $L^{\infty}\left(\mathbb{R}^n\right)$ are given so that $\bar{d} f \in S^{p, q}(\mathbb{C}^N \otimes L^2\left(\mathbb{R}^n\right))$.
These conditions are given in terms of the mean oscillation of $f$, and it is not obvious whether an equivalent condition could be given in terms of more familiar function spaces.
In the Appendix of Connes, Sullivan and Teleman's paper
\cite[p. 679]{CST1994}, it is proved that necessary and sufficient conditions for $\bar{d} f \in S^{n, \infty}(\mathbb{C}^N \otimes L^2\left(\mathbb{R}^n\right))$ are that $f \in L_{\mathrm{loc}}^1\left(\mathbb{R}^n\right)$ and $\nabla f \in L^n\left(\mathbb{R}^n, \mathbb{C}^n\right)$.

Recently, Lord--McDonald--Sukochev--Zanin \cite{LMSZ2017} gave a complete and different proof of this result under the assumption that $f \in L^{\infty}\left(\mathbb{R}^n\right)$ using double operator integrals.
Their method  gave sharp bounds on the quasinorm $\|\bar{d} f\|_{S^{n,\infty}(\mathbb{C}^N \otimes L^2\left(\mathbb{R}^n\right))}$. For the norm $\nabla f \in L^n\left(\mathbb{R}^n, \mathbb{C}^n\right)$, they implicitly assumed that the essentially bounded function $f$ has weak partial derivatives and that the Bochner norm of $\nabla f$ in $L^n\left(\mathbb{R}^n, \mathbb{C}^n\right)$,
$$
\|\nabla f\|_{L^n\left(\mathbb{R}^n, \mathbb{C}^n\right)}=\left(\int_{\mathbb{R}^n}\|(\nabla f)(x)\|_n^n d x\right)^{1 / n}=\left(\int_{\mathbb{R}^n} \sum_{j=1}^n\left|D_j f(x)\right|^n d x\right)^{1 / n},
$$
is finite.  The key step that they established is a new trace formula described as follows, which is analogous to
Connes in \cite{C1988}.

Recall that a trace on $S^{1, \infty}(\mathbb{C}^N \otimes L^2\left(\mathbb{R}^n\right))$ is a linear functional $\varphi: S^{1, \infty}(\mathbb{C}^N \otimes L^2\left(\mathbb{R}^n\right)) \rightarrow \mathbb{C}$ such that $\varphi([A, B])=0$ for all bounded operators $A$ and for all $B \in S^{1, \infty}(\mathbb{C}^N \otimes L^2\left(\mathbb{R}^n\right))$. The trace $\varphi$ is called continuous when it is continuous with respect to the $S^{1, \infty}(\mathbb{C}^N \otimes L^2\left(\mathbb{R}^n\right))$ quasinorm.
Given an orthonormal basis $\left\{e_n\right\}_{n=0}^{\infty}$ of $H$, define the operator $T:=\operatorname{diag}\left\{\frac{1}{n+1}\right\}_{n=0}^{\infty}$ by $\left\langle e_n, T e_m\right\rangle=\delta_{n, m} \frac{1}{n+1}$. The linear functional
$\varphi$ is called normalised when
$$
\varphi\left(\operatorname{diag}\left\{\frac{1}{n+1}\right\}_{n=0}^{\infty}\right)=1.
$$
The property that $\varphi$ is normalised is independent of the choice of orthonormal basis, since for all unitary operators $U$ and all bounded operators $B$ we have $\varphi\left(U B U^*\right)=\varphi(B)$.
\begin{pro}[\cite{LMSZ2017}]\label{Sthm6.1} Let $f \in L^{\infty}\left(\mathbb{R}^n\right)$ be real valued and such that $\nabla f \in L^n\left(\mathbb{R}^n, \mathbb{C}^n\right)$. Then there is a constant $c_n>0$ such that for any continuous normalised trace $\varphi$ on $S^{1, \infty}(\mathbb{C}^N \otimes L^2\left(\mathbb{R}^n\right))$ we have
$$
\varphi\left(|\bar{d} f|^n\right)=c_n \int_{\mathbb{R}^n}\|\nabla f(x)\|_2^n d x .
$$
\end{pro}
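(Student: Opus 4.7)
The plan is to reduce the identity to a principal-symbol/noncommutative-residue computation for classical negative-order pseudodifferential operators, and then invoke the sharp form of Connes' trace theorem to cover every continuous normalised trace (not only Dixmier traces). First, I would rewrite $\operatorname{sgn}(\mathcal D)$ in terms of Riesz transforms: since $\mathcal D$ has Fourier symbol $\sum_j \gamma_j\xi_j$ and $|\mathcal D|$ has symbol $|\xi|$, one checks that $\operatorname{sgn}(\mathcal D)=\sum_j \gamma_j\otimes R_j$ with $R_j$ the $j$-th Riesz transform, hence
$$\bar d f \;=\; i\sum_{j=1}^n\gamma_j\otimes[R_j,M_f].$$
For real $f$ the operator $\bar d f$ is self-adjoint, and Theorem \ref{Athm2} (with $w\equiv 1$) gives $\bar d f\in S^{n,\infty}$ with control by $\|f\|_{\dot W^{1,n}}$. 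Using continuity of $\varphi$ on $S^{1,\infty}$ together with the continuity of $f\mapsto \bar d f$ from $\dot W^{1,n}\cap L^\infty$ into $S^{n,\infty}$, it suffices to prove the identity for real $f\in C_c^\infty(\mathbb R^n)$.

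Next, split $(\bar d f)^2=|\bar d f|^2$ into a scalar piece and a piece invisible to $\varphi$. Using the Clifford relations $\gamma_j\gamma_k+\gamma_k\gamma_j=2\delta_{jk}$, write
$$(\bar d f)^2 \;=\; 1\otimes A + B, \qquad A:=-\sum_{j=1}^n[R_j,M_f]^2,\ \ B:=-\sum_{j<k}\gamma_j\gamma_k\otimes\bigl[[R_j,M_f],[R_k,M_f]\bigr].$$
A standard symbol calculation shows $[R_j,M_f]$ is a classical $\Psi$DO of order $-1$, so $[[R_j,M_f],[R_k,M_f]]$ has order at most $-3$ and lies in $S^{n/3,\infty}$. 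Expanding $(1\otimes A+B)^{n/2}$ by a suitable functional calculus and H\"older-type estimates in Schatten--Lorentz ideals, every term carrying at least one factor of $B$ lies in a Schatten--Lorentz class strictly smaller than $S^{1,\infty}$ and hence in the separable part $(S^{1,\infty})_0$, on which every continuous trace vanishes. Therefore
$$\varphi\bigl(|\bar d f|^n\bigr)\;=\;\varphi\bigl((1\otimes A)^{n/2}\bigr)\;=\;N\,\varphi\bigl(A^{n/2}\bigr).$$

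For the scalar operator $A$, the principal symbol of $[R_j,M_f]$ at $(x,\xi)$ equals $\sigma_{-1}(x,\xi)=\tfrac{1}{i|\xi|}\bigl(P(\omega)\nabla f(x)\bigr)_j$ with $\omega=\xi/|\xi|$ and $P(\omega)=I-\omega\omega^{\top}$, so $A$ is a positive classical $\Psi$DO of order $-2$ with principal symbol $|\xi|^{-2}|P(\omega)\nabla f(x)|^2$, and consequently $A^{n/2}$ is a classical $\Psi$DO of order $-n$ with principal symbol $|\xi|^{-n}|P(\omega)\nabla f(x)|^n$. By the sharp form of Connes' trace theorem, every continuous normalised trace on $S^{1,\infty}(L^2(\mathbb R^n))$ evaluated on a classical $\Psi$DO of order $-n$ is equal to a universal constant times its noncommutative residue, i.e.\ the integral of the principal symbol over the cosphere bundle. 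Combining this with the rotational identity $\int_{S^{n-1}}|P(\omega)v|^n\,d\omega=\kappa_n|v|^n$ yields $\varphi(A^{n/2})=c'_n\int_{\mathbb R^n}|\nabla f(x)|^n\,dx$, and we conclude with $c_n:=Nc'_n$.

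The main obstacle is the step asserting that \emph{every} continuous normalised trace (not only Dixmier traces) on a classical negative-order $\Psi$DO of order $-n$ coincides with a fixed multiple of its principal-symbol integral. This hinges on the fact that, modulo $(S^{1,\infty})_0$, such operators are determined by their principal symbols, which itself requires a delicate rigidity result in the LMSZ framework. A secondary but non-trivial technical point is the functional-calculus argument that $(1\otimes A+B)^{n/2}-(1\otimes A)^{n/2}\in(S^{1,\infty})_0$ when $n$ is odd and $n/2$ is not an integer; this calls for operator-Lipschitz estimates for the power function combined with the improved Schatten--Lorentz membership of $B$ established above.
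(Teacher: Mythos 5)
First, note that the paper does not prove this proposition at all: Proposition \ref{Sthm6.1} is quoted verbatim from \cite{LMSZ2017}, where it is established with double operator integral techniques, so there is no in-paper argument to compare against. Your sketch instead follows the pseudodifferential/noncommutative-residue route in the spirit of the Connes--Sullivan--Teleman appendix. Much of the skeleton is sound: the identity $\bar{d} f=i\sum_j\gamma_j\otimes[R_j,M_f]$, the Clifford splitting $(\bar{d} f)^2=1\otimes A+B$ with $B$ built from $\bigl[[R_j,M_f],[R_k,M_f]\bigr]$, the principal symbol computation for $[R_j,M_f]$, and the fact that every continuous trace on $S^{1,\infty}$ annihilates the separable part (hence any ideal $S^{r,\infty}$ with $r<1$) are all correct, and for even $n$ the binomial-type expansion discarding the $B$-terms is workable.

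The genuine gaps are the two points you yourself flag, and they are not technicalities but the core of the theorem. (i) For odd $n$ the exponent $n/2$ is fractional and $A$ is \emph{not elliptic}: its symbol $|\xi|^{-2}\bigl|P(\omega)\nabla f(x)\bigr|^2$ vanishes wherever $\nabla f(x)=0$ or $\nabla f(x)\parallel\xi$, so Seeley's complex-power calculus does not apply and $A^{n/2}$ is in general \emph{not} a classical pseudodifferential operator of order $-n$. The step ``consequently $A^{n/2}$ is a classical $\Psi$DO of order $-n$, apply the sharp Connes trace theorem'' therefore fails as stated; one would have to show directly that $A^{n/2}$ agrees, modulo the common kernel of all continuous traces, with an operator of the form $M_h(1-\Delta)^{-n/2}$ to which the trace theorem for arbitrary normalised continuous traces does apply --- and producing that reduction is exactly what the double operator integral machinery of \cite{LMSZ2017} accomplishes. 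The same non-integer-power problem afflicts the expansion of $\bigl((1\otimes A)+B\bigr)^{n/2}$; a Birman--Koplienko--Solomyak or operator-Lipschitz argument might repair it, but none is supplied. (ii) The reduction to $f\in C_c^\infty$ needs more than ``continuity of $\varphi$'': you need the a priori bound $\|\bar{d} f\|_{S^{n,\infty}}\lesssim\|\nabla f\|_{L^n}$ (available from Theorem \ref{Athm2} with $w\equiv1$ or from \cite{LMSZ2017}) together with a quantitative estimate showing $T\mapsto|T|^n$ is continuous from $S^{n,\infty}$ into $S^{1,\infty}$ on bounded sets, and an approximation of $f$ by smooth compactly supported functions in the $\dot{W}^{1,n}$ seminorm with uniform $L^\infty$ control; none of this is spelled out. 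As it stands the proposal is a plausible programme close to the CST approach, but the decisive steps are asserted rather than proved, and one of them ($A^{n/2}$ classical for odd $n$) is false as written.
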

Proposition \ref{Sthm6.1} is the analogue of \cite[Theorem 3(3)]{C1988} for functions on the non-compact manifold $\mathbb{R}^n$. It is  also stated for a larger class of functions than \cite[Theorem 3(3)]{C1988} which is proved for smooth functions.  Based on this trace formula, in \cite{LMSZ2017} they obtained that
\begin{pro}[\cite{LMSZ2017}]\label{Sthm6.2} Let $n>1$ and $f \in L^{\infty}\left(\mathbb{R}^n\right)$. Then, for $\bar{d} f\in S^{n,\infty}(\mathbb{C}^N \otimes L^2\left(\mathbb{R}^n\right))$, it is necessary and sufficient that
$\nabla f\in L^n\left(\mathbb{R}^n, \mathbb{C}^n\right)$. Further, there exist positive constants $c$ and $C$ depending only on $n$ such that,
$$
c\|\nabla f\|_{L^n\left(\mathbb{R}^n, \mathbb{C}^n\right)}\leq \|\bar{d} f\|_{S^{n,\infty}(\mathbb{C}^N \otimes L^2\left(\mathbb{R}^n\right))}\leq C\|\nabla f\|_{L^n\left(\mathbb{R}^n, \mathbb{C}^n\right)}.
$$
\end{pro}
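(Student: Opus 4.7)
The plan is to deduce Proposition~\ref{Sthm6.2} from Theorem~\ref{Athm2} (in the unweighted case $w\equiv 1\in A_2$) by unfolding the Clifford structure of $\bar{d}f$. The first step is to identify $\operatorname{sgn}(\mathcal{D})$ explicitly. The anticommutation relation $\gamma_j\gamma_k+\gamma_k\gamma_j=2\delta_{jk}$ yields $\mathcal{D}^2 = 1\otimes\sum_{j=1}^n D_j^2 = 1\otimes |D|^2$, so $|\mathcal{D}|=1\otimes|D|$ and, via the Borel functional calculus,
$$\operatorname{sgn}(\mathcal{D}) \;=\; \mathcal{D}\,(1\otimes |D|^{-1}) \;=\; \sum_{j=1}^n \gamma_j \otimes R_j,$$
where $R_j = D_j|D|^{-1}$ is the $j$-th Riesz transform. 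Consequently
$$\bar{d}f \;=\; i\bigl[\operatorname{sgn}(\mathcal{D}),\,1\otimes M_f\bigr] \;=\; -i\sum_{j=1}^n \gamma_j\otimes [M_f,R_j].$$

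The second step is to reduce the matrix-valued weak Schatten quasinorm to scalar ones, namely
$$\bigl\|\bar{d}f\bigr\|_{S^{n,\infty}(\mathbb{C}^N\otimes L^2(\mathbb{R}^n))} \;\approx\; \sum_{j=1}^n \bigl\|[M_f,R_j]\bigr\|_{S^{n,\infty}(L^2(\mathbb{R}^n))}.$$
The upper bound follows from the quasi-triangle inequality in $S^{n,\infty}$ together with the unitarity of each $\gamma_j$ on $\mathbb{C}^N$, which implies that the singular values of $\gamma_j\otimes T$ are those of $T$ each repeated $N$ times. For the lower bound I would use the Clifford-orthogonal projections
$$\Phi_j(S) \;:=\; \tfrac{1}{N}\bigl(\operatorname{tr}_{\mathbb{C}^N}\otimes\mathrm{id}\bigr)\!\bigl((\gamma_j\otimes 1)\,S\bigr),$$
which in view of $\operatorname{tr}(\gamma_j\gamma_k)=N\delta_{jk}$ satisfy $\Phi_j(\bar{d}f)=-i[M_f,R_j]$; since $\Phi_j$ is a normalized partial trace it is completely positive and contractive on the weak Schatten class, so $\|[M_f,R_j]\|_{S^{n,\infty}(L^2)} \lesssim \|\bar{d}f\|_{S^{n,\infty}(\mathbb{C}^N\otimes L^2)}$.

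The third step is to apply Theorem~\ref{Athm2} with the trivial weight $w\equiv 1$ to each scalar commutator, obtaining $\|[M_f,R_j]\|_{S^{n,\infty}(L^2(\mathbb{R}^n))}\approx \|f\|_{\dot W^{1,n}(\mathbb{R}^n)}$; summing in $j$ and combining with the previous step yields the stated equivalence. The $\mathrm{VMO}$ hypothesis of Theorem~\ref{Athm2} for $f\in L^\infty$ with $\nabla f\in L^n$ is justified at small scales by the Poincar\'e inequality $\frac{1}{|Q|}\int_Q|f-\langle f\rangle_Q|\,dx \lesssim \bigl(\int_Q|\nabla f|^n\,dx\bigr)^{1/n}$, and at large scales by approximating $f$ by compactly supported smooth functions in the $\dot W^{1,n}\cap L^\infty$ topology together with the continuity of both sides of the asserted equivalence. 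The main technical obstacle is the lower-bound half of the matrix-to-scalar reduction in the second step: one must verify carefully that the partial-trace-type projections $\Phi_j$ remain bounded on the quasi-Banach space $S^{n,\infty}$, which requires either a direct singular-value estimate or invoking complete positivity of the partial trace together with interpolation for the weak Schatten classes.
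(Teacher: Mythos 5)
You should note at the outset that the paper does not prove Proposition \ref{Sthm6.2} at all: it is quoted from \cite{LMSZ2017}, where the argument runs through double operator integrals and the trace formula recorded as Proposition \ref{Sthm6.1}. Your route is therefore genuinely different from the cited source, but it is essentially the same reduction the paper itself performs to obtain its weighted analogue, Theorem \ref{Athm3}: identify $\operatorname{sgn}(\mathcal D)=\sum_{j=1}^n\gamma_j\otimes R_j$, write $\bar d f=i\sum_{j=1}^n\gamma_j\otimes[R_j,M_f]$, and invoke Theorem \ref{Athm2}, here with the trivial weight $w\equiv1$. Since the proof of Theorem \ref{Athm2} (Sections \ref{Proof3}) nowhere uses the LMSZ result, there is no circularity, and what your version buys is an explicit matrix-to-scalar reduction that the paper leaves implicit. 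Moreover, the ``main technical obstacle'' you flag is not one: $\Phi_j$ is left multiplication by the unitary $\gamma_j\otimes 1$ followed by the normalised partial trace $\frac1N(\operatorname{tr}_{\mathbb{C}^N}\otimes\mathrm{id})(S)=\frac1N\sum_{m=1}^N V_m^*SV_m$, where $V_m\xi=e_m\otimes\xi$ are isometries; since $s_k(V_m^*SV_m)\le s_k(S)$, boundedness of $\Phi_j$ on $S^{n,\infty}$ follows from the quasi-triangle inequality alone, with no interpolation or complete positivity needed. The upper-bound half, using that the singular values of $\gamma_j\otimes T$ are those of $T$ repeated $N$ times, is likewise correct.

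There is one genuine, though repairable, gap. Theorem \ref{Athm2} carries the standing hypothesis $b\in{\rm VMO}(\mathbb{R}^n)$, whereas Proposition \ref{Sthm6.2} assumes only $f\in L^\infty(\mathbb{R}^n)$; you address this mismatch only in the sufficiency direction (via the critical Poincar\'e inequality and an approximation argument). In the necessity direction --- $\bar d f\in S^{n,\infty}$ implies $\nabla f\in L^n$ --- you apply the necessity half of Theorem \ref{Athm2} without having verified $f\in{\rm VMO}$. This is fixed in one line: $\bar d f\in S^{n,\infty}$ forces each $[M_f,R_j]$ to be compact on $L^2(\mathbb{R}^n)$, and compactness of Riesz commutators with $f\in L^\infty\subset{\rm BMO}$ characterises membership of $f$ in ${\rm CMO}\subset{\rm VMO}$ (the compactness characterisation cited in \cite{CDW2019}); insert that observation before invoking the theorem. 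For the sufficiency direction you could avoid the VMO discussion altogether, since the proof of the sufficient condition of Theorem \ref{Athm2} in Section \ref{Proof3} uses only $b\in\dot W^{1,n}$ and the NWO machinery, so that half may be quoted directly without the VMO hypothesis.
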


From our Theorem \ref{Athm2}, we have the following result in this direction:

\begin{thm}\label{Athm3}
Suppose $n>1$,  $f\in {\rm VMO}(\mathbb{R}^n)$, $w\in A_{2}$. Then $\bar{d} f\in S^{n,\infty}(\mathbb{C}^N \otimes L^2\left(w\right))$ if and only if $f\in \dot{W}^{1,n}(\mathbb{R}^n)$. Moreover,
$$
 \|\bar{d} f\|_{S^{n,\infty}(\mathbb{C}^N \otimes L^2\left(w\right))}\approx \|f\|_{ \dot{W}^{1,n}(\mathbb{R}^n)}.
$$
\end{thm}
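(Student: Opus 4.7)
The strategy is to reduce Theorem \ref{Athm3} directly to Theorem \ref{Athm2} by rewriting $\bar{d}f$ as a finite linear combination of tensor products $\gamma_j\otimes [f,R_j]$ and then transferring the weighted Schatten--Lorentz estimate for commutators of Riesz transforms to the quantised derivative.

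First, I would identify the sign of the Dirac operator in terms of Riesz transforms. The operator $\mathcal{D} = \sum_{j=1}^n \gamma_j\otimes D_j$ has matrix-valued Fourier multiplier $\sum_{j=1}^n \gamma_j\xi_j$, and the anticommutation relations $\gamma_j\gamma_k+\gamma_k\gamma_j = 2\delta_{j,k}$ yield $\mathcal{D}^2 = |\xi|^2$ as a scalar multiplier, so $|\mathcal{D}|$ has symbol $|\xi|$. Since $R_j$ has Fourier multiplier $-i\xi_j/|\xi|$, we deduce
\begin{equation*}
\operatorname{sgn}(\mathcal{D}) = i\sum_{j=1}^n \gamma_j\otimes R_j, \qquad \bar{d}f = i[\operatorname{sgn}(\mathcal{D}),\,1\otimes M_f] = \sum_{j=1}^n \gamma_j\otimes [f,R_j].
\end{equation*}

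Second, I would establish the tensor-product Schatten equivalence
\begin{equation*}
\|\bar{d}f\|_{S^{n,\infty}(\mathbb{C}^N\otimes L^2(w))} \approx \sum_{j=1}^n \|[f,R_j]\|_{S^{n,\infty}(L^2(w))},
\end{equation*}
with implicit constants depending only on $N$ and $n$. The upper bound follows from the quasi-triangle inequality in $S^{n,\infty}$ together with the fact that each $\gamma_j$ is a self-adjoint unitary on $\mathbb{C}^N$, so tensoring with $\gamma_j$ multiplies the Schatten--Lorentz quasi-norm by a harmless finite constant. The lower bound uses the trace-orthogonality $\operatorname{tr}_{\mathbb{C}^N}(\gamma_k\gamma_j) = N\delta_{j,k}$ of the Clifford matrices, by means of which each commutator can be extracted from $\bar{d}f$ via the partial trace
\begin{equation*}
[f,R_k] = \tfrac{1}{N}(\operatorname{tr}_{\mathbb{C}^N}\otimes \operatorname{id})\bigl((\gamma_k\otimes 1)\,\bar{d}f\bigr),
\end{equation*}
and the partial trace on a finite-dimensional factor is bounded on $S^{n,\infty}$ with norm depending only on $N$.

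Finally, applying Theorem \ref{Athm2} to each of the $n$ commutators $[f,R_j]$ gives $\|[f,R_j]\|_{S^{n,\infty}(L^2(w))} \approx \|f\|_{\dot{W}^{1,n}(\mathbb{R}^n)}$ uniformly in $j$, and combined with the previous step this yields the claimed equivalence. The main obstacle is the rigorous handling of the quasi-Banach nature of $S^{n,\infty}$ in the tensor-product reduction: both the quasi-triangle inequality and the partial trace bound must be verified with constants depending only on $N$, so that the finite-dimensional Clifford algebra manipulations genuinely preserve the Schatten--Lorentz norm up to equivalence. Once this purely linear-algebraic step is dispatched, all of the analytic content of Theorem \ref{Athm3} is subsumed into Theorem \ref{Athm2}.
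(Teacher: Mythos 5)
Your proof is correct and follows essentially the same approach as the paper: identify $\operatorname{sgn}(\mathcal{D})$ as a Clifford-weighted sum of Riesz transforms, write $\bar{d}f = i\sum_{j=1}^n \gamma_j \otimes [R_j, M_f]$, and transfer the weighted Schatten--Lorentz estimate directly from Theorem \ref{Athm2}. You additionally supply the linear-algebraic detail the paper leaves implicit---the two-sided comparison of $\|\bar{d}f\|_{S^{n,\infty}(\mathbb{C}^N\otimes L^2(w))}$ with $\sum_j \|[f,R_j]\|_{S^{n,\infty}(L^2(w))}$ via the quasi-triangle inequality for the upper bound and the partial trace over $\mathbb{C}^N$ (with the Clifford trace orthogonality $\operatorname{tr}(\gamma_k\gamma_j)=N\delta_{jk}$) for the lower bound.
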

\begin{proof}
For the convenience of the readers, we provide the details of the link between $\bar{d} f$ and $[f,\nabla \Delta^{-1/2}]$, where $\Delta$ is the standard Laplacian on $\mathbb R^n$.
In fact, from the definition of $\mathcal{D}$ and the property of these self-adjoint complex matrices $\gamma_1, \ldots, \gamma_n$, we see that
$$
\mathcal{D}^2=-1\otimes \Delta.
$$
Moreover,
$\operatorname{sgn}(\mathcal{D})$ which can equivalently be expressed as follows.
$$
\operatorname{sgn}(\mathcal{D})=\sum_{j=1}^n \gamma_j \otimes D_j\Delta^{-1/2}=\sum_{j=1}^n \gamma_j \otimes R_j,
$$
where $R_j$ is the $j$th Riesz transform. Hence,
\begin{align*}
\bar{d} f&=i\left[\operatorname{sgn}(\mathcal{D}), 1 \otimes M_f\right] = i\left[\sum_{j=1}^n \gamma_j \otimes R_j, 1 \otimes M_f\right]
= i\sum_{j=1}^n\left[ \gamma_j \otimes R_j, 1 \otimes M_f\right]\\
&= i\sum_{j=1}^n(\gamma_j \otimes R_jM_f -\gamma_j \otimes M_fR_j)\\
&=i\sum_{j=1}^n\gamma_j \otimes [R_j,M_f].
\end{align*}
Thus, the result follows from Theorem \ref{Athm2}.
\end{proof}

\bigskip
\noindent{\bf Acknowledgements:} The authors really appreciate all the efforts of referees for reading and checking the paper, and  for the  valuable comments. 

JL would like to thank E. McDonald, F. Sukochev and D. Zanin for helpful discussions on the quantised derivative.

JL's research supported by Australian Research Council DP 220100285. BDW's research is supported in part by National Science Foundation Grants DMS \#1800057, \#2054863, and \#20000510 and Australian Research Council DP 220100285. ZG's research supported by the China Scholarship Council [grant number 202006460063]

\arraycolsep=1pt


\begin{thebibliography}{99}

\bibitem{B}
      S. Bloom, A commutator theorem and weighted BMO,
     Trans. Amer. Math. Soc., 292 (1985),
103--122.


  \bibitem{C1965}
A.-P. Calder\'{o}n, Commutators of singular integral operators, Proc. Natl. Acad. Sci. USA 53 (1965) 1092--1099.

  \bibitem{CDW2019}
P. Chen, X. Duong, J. Li and Q. Wu, Compactness of Riesz transform commutator on stratified Lie groups, J.
Funct. Anal., 277 (2019), no. 6, 1639--1676.

  \bibitem{CLMS}
R. Coifman, P.L. Lions, Y. Meyer and S. Semmes, Compensated compactness and Hardy sapces, J. Math. Pures Appl., 72 (1993), 247--286.

  \bibitem{CRW1976} R. Coifman, R. Rochberg and G. Weiss, Factorization theorems for Hardy spaces in several variables, { Ann. of Math}., { 103} (1976),  611--635.

  \bibitem{C1988}
A. Connes, The action functional in noncommutative geometry, Comm. Math. Phys. 117(4) (1988) 673--683.

\bibitem{C1994}
A. Connes, Noncommutative Geometry, Academic Press, Inc., San Diego, CA, 1994.

\bibitem{CST1994}
A. Connes, D. Sullivan, N. Teleman, Quasiconformal mappings, operators on Hilbert space, and local formulae for characteristic classes, Topology 33(4) (1994) 663-681.




  \bibitem{FLL2021}
Z. Fan, M. T. Lacey and J. Li, Schatten classes and commutators of Riesz Transform on Heisenberg group and applications, arXiv: 2107.10569v3.


  \bibitem{FLMSZ}
Z. Fan, J. Li, E. McDonald, F. Sukochev and D. Zanin,
Endpoint weak Schatten class estimates and trace formula for commutators of Riesz transforms with multipliers on Heisenberg groups, arXiv: 2201.12350.


  \bibitem{F2022}
R. L. Frank,
A characterization of $\dot{W}^{1,p}(\mathbb{R}^{n})$, arXiv: 2203.01001.

  \bibitem{FSZ2022}
R. L. Frank, F. Sukochev and D. Zanin,
Asymptotics of singular values for quantum derivatives, arXiv:2209.12559.

  \bibitem{GG2017}
 H. Gimperlein and M. Goffeng. Nonclassical spectral asymptotics and Dixmier traces: from circles to contact manifolds. Forum Math. Sigma,
5: Paper No. e3, 57, 2017.


  \bibitem{G2014}
L. Grafakos, Modern fourier analysis. third edition., Springer, 2014.

  \bibitem{HLW2017}
I. Holmes, M. T. Lacey, and B. D. Wick, Commutators in the two-weight setting. Math. Ann. 367 (2017), no. 1-2, 51--80.

\bibitem{Hy}
{  T. Hyt\"{o}nen. The $L^p$-to-$L^q$ boundedness of commutators with applications to the Jacobian operator, J. Math. Pures Appl., 156 (2021),  351--391.}
  
\bibitem{HK2012}
{ T. Hyt\"{o}nen, A. Kairema, 
Systems of dyadic cubes in a doubling metric space, 
Colloq. Math. 126 (2012), 1--33.
}
\bibitem{JW1982}
S. Janson, T. H. Wolff, Schatten classes and commutators of singular integral operators, Ark. Mat. 20(2) (1982) 301--310.

\bibitem{LL2022} M. Lacey and J. Li, 
       Compactness of commutator of Riesz transforms in the two weight
  setting, J. Math. Anal. Appl., 508 (2022), 
 Paper No. 125869.

  \bibitem{LLW2022}
M. T. Lacey, J. Li and B. D. Wick, Schatten classes and commutator in the two weight setting. I. Hilbert transform, arXiv: 2202. 11854v1.

  \bibitem{LPPW2008}
M. T. Lacey, J. Pipher, S. Petermichl, and B. D. Wick, Iterated Riesz
Commutators: A Simple Proof of Boundedness, Proceedings of 8th Intl. Conf. on Harm.
Analysis and PDE at El Escorial, Madrid (Spain), 2008, available at http://www.arxiv.org/
abs/0808.0832.



  \bibitem{LMSZ2017}
S. Lord, E. McDonald, F. Sukochev and D. Zanin,
Quantum differentiability of essentially bounded functions on Euclidean space,
J. Funct. Anal.,  273 (2017) 2353--2387.


 \bibitem{N} Z. Nehari,
On bounded bilinear forms,
Ann. of Math., { 65} (1957), 153--162.


  \bibitem{P2003}
V. V. Peller, Hankel operators and their applications. Springer Monographs in Mathematics. Springer-Verlag, New York, 2003.

  \bibitem{Pe2008}
S. Petermichl, The sharp weighted bound for the Riesz transforms. Proc. Amer. Math. Soc.,
136(4)(2008), 1237-1249.

  \bibitem{PTV2002}
S. Petermichl, S. Treil and A. Volberg, Why are the Riesz transforms averages of the dyadic
shift?, Proceedings of the 6th international conference on harmonic analysis (El Escorial),
Publ. Mat. (2002), Extra Vol., pp. 209-228.


  \bibitem{RS1989}
R. Rochberg and S. Semmes, Nearly weakly orthonormal sequences, singular value estimates, and Calderon--
Zygmund operators, J. Funct. Anal., 86 (1989), 237--306.

 














\end{thebibliography}
\end{document}